\documentclass[10pt]{amsart}
\let\Q\QQ
\newcommand{\Q}{\mathbb Q}
\newcommand{\Top}{\mathcal S}

\newcommand{\Sqp}{\Sq^2_+}
\usepackage{mathabx,epsfig}

\input{corank2.sty}

\title[Enumerating corank 2 complex vector bundles on odd complex projective spaces]{Enumerating corank 2 complex vector bundles on odd complex projective spaces}

\author{Yang Hu}
\address[Y. Hu]{University of Regina}
\email{yang.hu@uregina.ca}

\author{Morgan Opie}
\address[M. Opie]{Northwestern University}
\email{mpopie@northwestern.edu}
\subjclass[2020]{Primary 55R25; Secondary 55S35, 55R50, 55S45.}

\date{}                     

\begin{document}

\begin{abstract}
For $n$ odd, we enumerate rank $n$ complex topological vector bundles on $\mathbb CP^{n+2}$ with prescribed complex $K$-theory class. The answer exhibits a 24-fold periodicity and is closely related to the enumeration of corank $2$ stably trivial vector bundles. 
\end{abstract}
\maketitle
\tableofcontents

\section{Introduction}


Given a finite cell complex $X$, a classical problem is to understand all fixed-rank complex topological vector bundles over $X$. To do so, we begin with complex $K$-theory, the Grothendieck group of finite-rank complex topological vector bundles on $X$ under direct sum. Complex $K$-theory is a rich, well-understood, and often computable invariant by work of Bott \cite{Bott}, Atiyah--Hirzebruch \cite{AHSSeq}, and others. We then ask: 
\begin{q}\label{q:1} Let $h$ be a reduced complex $K$-theory class on $X$. Does $h$ admit a rank $n$ representative? If so, how many isomorphism classes of such representatives are there?
\end{q}
In this paper we answer \Cref{q:1} for $X=\CP^{n+2}$, with $n$ odd and $h$ arbitrary. Since the $K$-theory of complex projective spaces is well-known, this completely describes rank $n$ complex topological vector bundles on $\CP^{n+2}$ when $n$ is odd.
To state our main theorem, we introduce some notation. Given a reduced $K$-theory class $h$ on $X$ as in \Cref{q:1}, we write
\[
\Vect_n^h(X)
\]
for the set of isomorphism classes of rank $n$ complex vector bundles on $X$ representing $h$ in $K$-theory.

\begin{thmx}[{\Cref{main3count}, \Cref{thm:n3mod4}, \Cref{cor:1mod4count}, \Cref{thm:n5mod8-2local}}]\label{thm:main} Let $n\geq 3$ be odd and let $h$ be a reduced complex $K$-theory class on $\CP^{n+2}$. Then $\Vect_n^h(\CP^{n+2})$ is non-empty if and only if the $(n+1)$-st and $(n+2)$-nd Chern classes of $h$ vanish. When this set is non-empty, the size of $\Vect^h_{n}(\CP^{n+2})$ is given by a
$24$-periodic formula depending only on $n$ modulo $24$ and the first and second Chern classes of $h$ modulo $12$, according to the formulas in \Cref{table:1}.

\begin{figure}[h]
\centering
\textbf{The size of $\Vect^h_n(\CP^{n+2})$ in terms of $n$, $c_1(h)$, and $c_2(h)$.}\par\medskip
{\setlength{\arraycolsep}{5pt}
\renewcommand{\arraystretch}{1.2}
\[
\begin{array}{c@{\qquad}cccccc}
\hline
n \bmod 24 & 1 & 3 & 5 & 7 & 9 & 11 \\
\noalign{\hrule height 0.8pt}
\#\Vect^h_n & (2,c_2) & (3,c_1,c_2) & (4,2c_1+c_2) & 1 & (6,2c_1,c_2) & 1 \\ \\
\hline 
n \bmod 24 & 13 & 15 & 17 & 19 & 21 & 23 \\
\noalign{\hrule height 0.8pt}
\#\Vect^h_n & (4,2c_1+c_2) & (3,c_1,c_2) & (2,c_2) & 1 & (12,4c_1,2c_1+c_2) & 1 \\
\end{array}
\]
}
\caption{In the above, $c_1=c_1(h)$ and $c_2=c_2(h)$ are the first and second Chern classes of $h$, respectively; we identify these classes with integers for complex projective spaces. $\#\Vect^h_n$ is the size of the set $\Vect^h_n(\CP^{n+2})$. For integers $a,b,c$, the the symbols $(a,b)$ and $(a,b,c)$ denote the corresponding greatest common divisors.}\label{table:1}
\end{figure}

\end{thmx}
To explain the homotopy-theoretic approach to \Cref{q:1}, we recall the objects representing vector bundles and $K$-theory. Let $U(n)$ denote the unitary group of rank $n$, and $U=U(\infty)$ be the stabilized unitary group. The classifying spaces $BU(n)$ and $BU$ represent rank $n$ complex vector bundles and reduced complex $K$-theory classes, respectively. The standard inclusion $U(n) \hookrightarrow U$ induces a natural map $BU(n)\to BU$ representing the stabilization of rank $n$ complex vector bundles, taking such a bundle to its $K$-theory class. Thus, answering \Cref{q:1} amounts to studying homotopy classes of lifts of $h$ along the natural map $BU(n)\to BU$, as depicted in \Cref{diagram1}. 
\begin{equation}\label[diagram]{diagram1}
\begin{tikzcd}
& BU(n)\ar[d]\\
X \ar[r,"h"] \ar[ur,dashed] & BU.
\end{tikzcd}
\end{equation}
We use {\em Moore--Postnikov obstruction theory} to treat this lifting problem. 

Fixed-rank complex vector bundles on projective spaces have been extensively
studied using obstruction theory and related unstable methods; see,
for example, \cite{AR,Switzer,Switzer2,Opie-r3p5}. Our results continue
this tradition.
Two previous calculations provide particularly useful points of comparison. First, Theorems 1.1 and 1.2 of \cite{Hu} provide answers to the stably trivial enumeration problems in coranks $1$ and $2$. Thus, we already have a complete understanding of $\Vect_n^0(\CP^{n+1})$ and $\Vect_n^0(\CP^{n+2})$. Second, nonzero stable classes in corank $1$ are completely understood by \cite[Theorem 1.3]{opie24enum}: the set $\Vect_n^h(\CP^{n+1})$ has a simple description for any choice of $h$, depending only on the parity of $n$ and $c_1(h)$, and whether or not $c_{n+1}(h)$ vanishes. Given these results, a natural next step is to consider corank $2$ bundles on $\CP^{n+2}$ for $h\neq 0$, which \Cref{thm:main} accomplishes when $n$ is odd. 

We note that \Cref{thm:main} shows that the cardinality of $\Vect^h_n(\CP^{n+2})$ is periodic in $n$ and is controlled by divisibility properties of $c_1(h)$ and $c_2(h)$. It also follows from the displayed formulas that $\#\Vect_n^h(\CP^{n+2})$ divides the corresponding cardinality for $h=0$. These structural features are also present in the earlier corank $1$ calculation for arbitrary $h$ \cite[Theorem 1.3]{opie24enum}. Thus, the authors view \Cref{thm:main} as computational evidence of structure in $\Vect_r^h(\CP^m)$, at least for $m-r$ not too large. More explicitly, future exploration may be guided by the following:
\begin{q}\label{conj:bound} Fix a dimension $m$ and a rank $r$ in the {\em metastable range} (that is, for $\frac{m}{2}\leq r<m$). Let $h\: \CP^{m} \to BU$ be a reduced $K$-theory class. Are Chern classes the only obstruction to $h$ admitting a rank $r$ representative? Is the size of  $\Vect^h_r(\CP^m)$ always a divisor of the size of $\Vect^0_r(\CP^m)$? Is there a closed formula for the size of $\Vect^h_r(\CP^m)$ in terms of $m$, $r$, and small Chern classes of $h$? \end{q} 

\begin{rmk} \Cref{conj:bound} makes sense for any $m$ and $r$, but the metastability restriction seems likely for any reasonable uniform answer. Structural results for stably trivial vector bundles only hold in this range (see \cite[Theorem 2.1]{Hu}). 
Moreover, outside the metastable range, Chern classes need not be the
only obstructions to the existence of a fixed-rank representative. By
\cite[Theorem 3(c)]{Switzer2}, there are $K$-theory classes $h$ on
$\CP^5$ for which $c_i(h)=0$ for all $i\geq 3$, but which do not admit
rank $2$ representatives. This provides evidence for additional
unstable phenomena in low rank. \end{rmk} 

\subsection{Outline}

In \Cref{prelim}, we collect relevant background. We begin in \Cref{weiss} with an overview of Weiss' unitary calculus, to motivate the stably trivial story. This is followed by a discussion of classical fracture theorems in unstable homotopy (\Cref{fracture}) and some setup for lifting problems (\Cref{lifting}).

\Cref{structure} contains key structural observations, which substantially simplify the computations in the rest of the paper.

The remainder of the paper provides computational ingredients for \Cref{thm:main}. 
In \Cref{MoorePost3}, we give a $3$-primary Moore--Postnikov argument that allows us to completely analyze $3$-divisibility of $\#\Vect^h_n(\CP^{n+2})$. The challenging computations are in \Cref{MoorePost2}, where we give separate $2$-primary arguments for each odd residue class of $n$ modulo $8$, and appeal to background theory from \Cref{structure} to simplify arguments as much as possible. By doing so, we only need to analyze two cases: $n \equiv 1,5\pmod{8}$. Of these, the case $n\equiv 1\pmod{8}$ is more straightforward, while $n\equiv 5\pmod{8}$ requires more extensive analysis. 

\Cref{app:Z4-cohomology} includes some $\Z/4$-cohomology computations needed for the case $n\equiv 5\pmod{8}$, while \Cref{app:Chernformulas} records some standard results on Steenrod operations on Chern classes.


\subsection{Notations and conventions}\label{sec:not}
Throughout this paper, we use the following notations and conventions.
\begin{itemize}
\item All spaces have the homotopy type of a CW complex.
\item Given spaces $X$ and $Y$, we write $[X,Y]$ for homotopy classes of maps from $X$ to $Y$.  We write $Y^X$ for the space of maps from $X$ to $Y$, so that $[X,Y]=\pi_0(Y^X)$. 
\item Given a finite simply connected CW  complex $X$, let $h \in [X,BU]$ be a fixed (reduced) complex $K$-theory class. We let $\Vect_n^h(X)$ denote the set of isomorphism classes of rank $n$ complex vector bundles on $X$ representing the $K$-theory class $h$. If $h=0$, $\Vect^h_n(X)$ is the set of isomorphism classes of stably trivial rank $n$ vector bundles on $X$.
\item Given a pointed space $X$, we write $\pi_iX$ for its $i$-th unstable homotopy group.  Given any group or ring $G$, we usually write $H^*_{G} X$ for the cohomology of $X$ with $G$ coefficients. In some situations, e.g., if $G$ has a long name, we write $H^*(X,G)$.
\item Let $\Top$ denote the category of topological spaces. The $m$-truncation functor $\tau_{\leq m}\: \Top \to \Top_{\leq m}$ is the reflection onto the full subcategory $\Top_{\leq m}$ of spaces $X$ with $\pi_iX=0$ for $i>m$, i.e., the left adjoint to the inclusion $\Top_{\leq m} \hookrightarrow \Top.$
\item A space $Y$ is {\em $m$-skeletal} if, for all $X$, $[Y,X]\xrightarrow{\cong} [Y,\tau_{\leq m}X]$ via composition with $m$-truncation. CW complexes with cells of dimension at most $m$ are $m$-skeletal.

\item Given a prime $p$ and a group, ring, space, or spectrum $X$, we write $\c{X}{p}$ for $p$-completion.
\item Given a graded module $M$ and an integer $k$, we write $M^{*\leq k}$ for the quotient of $M$ by the submodule of elements of degree greater than $k$.
\item We write $\Sp$ for the category of spectra.
\item Given an Eilenberg--Mac Lane space $K(G,n)$, we write $\iota_n$ for a generator of $H^n(K(G,n),G)$.
\end{itemize}

\subsection{Acknowledgements} The authors are grateful to Ben Antieau, Paul Goerss, Gijs Heuts, Mike Hill, Alexander Smith, Niall Taggart, and Allen Yuan for useful conversations.
The first-named author is supported by the Pacific Institute of Mathematical Sciences (PIMS) through CRG41.
The second-named author was partially supported by a National Science Foundation Mathematical Sciences Postdoctoral Research Fellowship, No. 2202914, at the beginning of this project.

\section{Preliminaries}\label{prelim}
In this section, we recall some background ideas. In \Cref{weiss}, we discuss enumeration of stably trivial bundles via {\em Weiss calculus} (also known as {\em unitary calculus}) as an organizing principle and source of inspiration for the general case. In \Cref{fracture} and \Cref{lifting}, we briefly review some classical homotopy theory underpinning the more technical computations in the body of this paper. 
\subsection{Weiss calculus and vector bundles}\label{weiss}

Weiss introduced a functor calculus in \cite{Weiss}, which includes various flavours: {\em orthogonal calculus} (the focus of Weiss' original paper), {\em unitary calculus} (see, e.g., \cite{Arone02}, \cite{Hu}, \cite{taggart22unitary}), and {\em symplectic calculus} (as introduced in \cite{carrtagg24}). The general set-up in any variant of Weiss calculus is to approximate functors from some topological category of vector spaces to pointed topological spaces by {\em polynomial functors}. Prototypical examples of functors where this formalism can be fruitfully applied include the functor $BO(-)$ with domain real inner product spaces, and the functor $BU(-)$ with domain complex inner product spaces. In both these cases, the resulting {\em Weiss tower} of polynomial approximation functors converges to the original functor. In general, the fibers of maps between subsequent layers in the Weiss tower evaluate to infinite loop spaces. 

A key idea in \cite{Hu} is to use the (unitary) Weiss tower for $BU(-)$ to enumerate complex vector bundles. Evaluating this Weiss tower at an $r$-dimensional complex inner product space results in a tower of fibrations 
\[\to T_iBU(r) \to T_{i-1}BU(r) \to \ldots \to T_1BU(r) \to BU,\] 
where the fiber of $T_iBU(r) \to T_{i-1}BU(r)$ is an infinite loop space and arbitrarily highly connected as $i$ goes to infinity. In the so-called {\em metastable range}, i.e., for spaces of CW dimension up to $2m$ where 
$\frac{m}{2} \leq r <m$, these connectivity estimates reduce the study of rank $r$ complex vector bundles to the study of maps into $T_1BU(r)$. The fiber  $L_1BU(r)$ of $T_1BU(r) \to BU$ is described abstractly by \cite[Theorem 2]{Arone02} and explicitly in \cite[Proposition 2.3]{Hu}: 
\[L_1BU(r) \cong \Omega^\infty \Sigma \CP^\infty_r,\] 
where the spectrum $\CP^\infty_r$ is the cofiber of the natural map $\Sigma^\infty_+\CP^{r-1} \to \Sigma^\infty_+ \CP^\infty$.
For rank $r$ vector bundles on $\CP^m$ with $\frac{m}{2} \leq r < m$, Hu deduces the following \cite[Theorem 2.1]{Hu}:
\[\Vect^0_r(\CP^m) \cong [ \CP^m,\Omega^\infty\Sigma \CP^{\infty}_r] \cong \pi_0\map_{\Sp}(\CP^m_r,\Sigma \CP^{m}_r).\]
Thus, stably trivial vector bundles are naturally identified with the zeroth homotopy group of a mapping spectrum between finite cell complexes. The first-named author uses this description to completely compute corank $1$ and corank $2$ stably trivial complex topological vector bundles on complex projective spaces \cite[Theorems 1.1, 1.2]{Hu}.  The corank $2$ result is below.
\begin{thm}[Hu '23, {\cite[Theorem 1.2]{Hu}}]\label{thm:yang-main} The size of $\Vect^0_n(\CP^{n+2})$ is as follows:
\[
\begin{tikzpicture}
\matrix (m) [matrix of math nodes,
nodes in empty cells,nodes={minimum width=2ex,
minimum height=2ex,outer sep=-1pt},
column sep=.2ex,row sep=.1ex]{
n \mod{24} & 0 & 1 & 2 & 3 & 4 & 5 & 6 & 7 & 8 & 9 & 10 & 11 & \, \\
\#\Vect^0_n(\CP^{n+2}) &  12 & 2 & 1&3&2 &4&3& 1 & 4& 6 & 1 & 1&\,
\\ \\ 
  n \mod{24} & 12 & 13 & 14 & 15 &16 &17  & 18 & 19 & 20 &21 & 22 & 23& \, \\
\#\Vect^0_n(\CP^{n+2})  & 6& 4 & 1 & 3 & 4 & 2& 3 & 1 & 2& 12 & 1 & 1 \strut \\};
\draw[thick] (m-1-1.north west) -- (m-1-13.north east);
\draw[] (m-1-1.south west) -- (m-1-13.south east);
\draw[thick] (m-4-1.north west) -- (m-4-13.north east);
\draw[] (m-4-1.south west) -- (m-4-13.south east);
\end{tikzpicture}
\]
\end{thm}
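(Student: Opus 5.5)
The plan is to reduce the count to a stable homotopy computation using the metastable-range description just recalled. For $m=n+2$ and $r=n$ with $n\geq 3$ we have $\frac{n+2}{2}\leq n<n+2$, so \cite[Theorem 2.1]{Hu} gives
\[
\Vect^0_n(\CP^{n+2}) \cong \pi_0\map_{\Sp}(\CP^{n+2}_n,\Sigma \CP^{n+2}_n) = [\CP^{n+2}_n,\Sigma\CP^{n+2}_n].
\]
The first task is therefore to compute the group of stable maps from the three-cell stunted projective space $\CP^{n+2}_n$ (cells in dimensions $2n,2n+2,2n+4$) to its suspension (cells in dimensions $2n+1,2n+3,2n+5$). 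Only stems $1$, $3$, and $5$ together with $\pi_0$ contributions can appear, and since $\pi_5^s=0$ the relevant stable stems are $\pi_1^s=\Z/2$ and $\pi_3^s=\Z/24$. This already explains the $24$-periodicity.

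I will filter by cells and use the Atiyah--Hirzebruch-type long exact sequences coming from the cofiber sequences $S^{2n}\to\CP^{n+1}_n\to S^{2n+2}$ and so on. The result is an extension problem among $E_2$-terms built from $\pi_1^s$ (from cells two apart, giving one copy of $\Z/2$ from each of the two adjacent pairs of cells) and $\pi_3^s$ (from the bottom cell to the top cell). The differentials are given by the attaching maps of $\CP^{n+2}_n$. The attaching map between adjacent cells is $\eta$ times the coefficient $n$ modulo $2$, determined by $\Sq^2$ on powers of the generator $x$. The top cell is attached to the bottom cell by an element of $\pi_3^s$ determined by the $e$-invariant, which I will compute from the Chern character or Todd-genus coefficients of the stunted projective space. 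These depend on $n$ modulo $24$.

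I will split into $2$-primary and $3$-primary parts.
\begin{itemize}
\item \textbf{$3$-primary part.} Only $\Z/3\subset\pi_3^s$ contributes. The answer is $\Z/3$ or $0$, according to whether the relevant $\alpha_1$-attaching coefficients vanish. These are determined by $\mathcal P^1$ on $x^n$, which depends on $n$ modulo $3$.
\item \textbf{$2$-primary part.} For $n$ odd, $\eta$ is nontrivial on the appropriate cells, which kills classes. For $n$ even, the two $\Z/2$'s and $\Z/8$ survive in varying amounts depending on $n$ modulo $8$.
\end{itemize}

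The main obstacle is resolving the hidden extensions and Toda-bracket contributions in the $2$-primary part, for example deciding whether the $\Z/2$'s extend into $\Z/8$ to give order $4$ versus $2\times 2$. For this I would use $J$-homomorphism and $e$-invariant arguments, or equivalently compare with $K$-theory and Adams operations on $\CP^{n+2}_n$, computing $[\CP^{n+2}_n,\Sigma \CP^{n+2}_n]$ via the $J$-spectrum (the $\mathrm{Im}\,J$ approximation is exact in this range of stems). Finally I would check the residues modulo $24$ one at a time against the table, using the cardinalities multiplicatively across the primes $2$ and $3$.
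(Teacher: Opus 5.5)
Your route is the one the paper attributes to Hu: the metastable identification $\Vect^0_n(\CP^{n+2})\cong[\CP^{n+2}_n,\Sigma\CP^{n+2}_n]$ of \cite[Theorem 2.1]{Hu}, followed by a cell-by-cell computation driven by the attaching maps of the stunted projective space, with the $\nu$-component read off from the $e$-invariant; carried out correctly, this reproduces the table. One correction to your expectations: only cardinalities are needed, so no extension problem arises, and in fact for every $n$ both $\eta$-classes die (each either supports or is hit by an $\eta$-differential) and $\eta^3=4\nu$ is always hit, so the $2$-primary part is cyclic of order $\gcd(4,k_n)$ and the $3$-primary part is $\Z/\gcd(3,n)$; here $k_n\in\Z/4$ is the $\nu$-coefficient of the top attaching map, which is well defined only modulo $4\nu$, exactly the information $e_{\mathbb C}$ detects, and one gets $k_n\equiv\pm n(3n+1)/2$ for $n$ odd and $k_n\equiv\pm n(3n+5)/2$ for $n$ even.
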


\subsection{Fracturing problems}\label{fracture}
The fracture square in homotopy theory is analogous to the Hasse principle in number theory (namely, checking an arithmetic property over $\bb{Z}$ is equivalent to checking it over $\bb{Q}$ and over the $p$-adic integers $\bb{Z}_p$ for every prime $p$). 
Given a nilpotent space $Z$, its arithmetic completion at a prime $p$, denoted $\c{Z}{p}$, or rationalization, denoted $Z_{\mathbb Q}$, is constructed and discussed in detail in \cite{MP}. The key result we rely on here is the following:

\begin{thm}[Fracture Theorem for Completion, {\cite[Theorem 13.5.3]{MP}}]
   \label{thm:fracture-spaces}
   The following is a homotopy pullback square for every finite-type, simple space $Z$.
\begin{equation} \label[diagram]{fracture-square}
      \begin{tikzcd}[column sep = large]
           Z \ar[r] \ar[d] & \prod_p \c{Z}{p} \ar[d] \\
           Z_{\bb{Q}} \ar[r] & (\prod_p \c{Z}{p})_{\bb{Q}}.
      \end{tikzcd}
\end{equation}
\end{thm}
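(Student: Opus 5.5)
The square in \Cref{fracture-square} is a homotopy pullback. I would prove this by comparing homotopy groups, reducing first to the Eilenberg--Mac Lane case and then climbing the Postnikov tower.

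\textbf{Step 1 (algebraic input).} For a finitely generated abelian group $A$, there is a pullback square
\[
A \to \prod_p \c{A}{p}, \qquad A\otimes\Q \to \Bigl(\prod_p \c{A}{p}\Bigr)\otimes\Q,
\]
and the induced sequence
\[
0\to A\to A_{\Q}\oplus \prod_p \c{A}{p}\to (\prod_p\c{A}{p})_{\Q}\to 0
\]
is short exact. This is the arithmetic square $\Z=\Q\times_{\hat\Z\otimes\Q}\hat\Z$, tensored with $A$; tensoring is exact here because $A$ is finitely generated. Surjectivity on the right holds because $\Q+\hat{\Z}=\hat{\Z}\otimes\Q$.

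\textbf{Step 2 (Eilenberg--Mac Lane case).} For $Z=K(A,n)$ with $A$ finitely generated, the completions, rationalizations and products are again $K(-,n)$ spaces. Their homotopy groups are those of Step 1, because completion of finite-type spaces is computed on homotopy groups. The Mayer--Vietoris long exact sequence of homotopy groups for the square then has the form coming from Step 1. Surjectivity on the right means there are no extra connecting terms, so the homotopy fiber product has homotopy groups $A$ in degree $n$ only. The map from $Z$ is therefore an equivalence.

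\textbf{Step 3 (induction over the Postnikov tower).} A simple space has principal Postnikov refinements $Z_{n}\to Z_{n-1}$ with fiber $K(\pi_nZ,n)$, classified by a map $Z_{n-1}\to K(\pi_nZ,n+1)$. Completion and rationalization preserve these fiber sequences for finite-type nilpotent spaces, and $\prod_p$ commutes with fibrations. So each corner of the square is compatible with the tower.

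Homotopy pullbacks commute with homotopy fibers. Hence, by the five lemma, the comparison map $Z_n\to P_n$ to the pullback is an equivalence whenever $Z_{n-1}\to P_{n-1}$ is. The one subtlety is $\pi_0,\pi_1$ of the pullback. This is exactly what the surjectivity in Step 1 handles, and it also requires that the rational space $(\prod_p\c Z p)_\Q$ is simple, so that the gluing is well defined. Passing to the limit over $n$ gives the equivalence for $Z$, since both sides are the limit of their Postnikov towers and the relevant $\lim^1$ terms vanish by finite type.

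\textbf{Main obstacle.} The real work is in the foundational claims of Step 3: that $p$-completion of a finite-type nilpotent space acts on homotopy groups as $-\otimes\c{\Z}{p}$, and that it is compatible with principal fibrations. For this I would rely on the Bousfield--Kan/May--Ponto construction of completion via localization. Since the statement is quoted from \cite{MP}, in the paper one simply cites it.
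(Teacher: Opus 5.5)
Your argument is correct. The paper gives no proof of this statement; it is quoted from May--Ponto. What you sketch is the standard argument behind the cited result:
\begin{itemize}
\item identify the homotopy groups of the corners as $\pi_kZ\otimes\mathbb{Q}$, $\pi_kZ\otimes\widehat{\mathbb{Z}}$ and $\pi_kZ\otimes\widehat{\mathbb{Z}}\otimes\mathbb{Q}$;
\item use the arithmetic square for finitely generated abelian groups to split the Mayer--Vietoris sequence of the homotopy pullback into short exact sequences.
\end{itemize}
This kills $\pi_0$ of the pullback and identifies every $\pi_k$ with $\pi_kZ$.

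A few small points.

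Your Step 3 is more than you need. The facts you already invoke there are that $p$-completion and rationalization act naturally on homotopy groups of finite-type nilpotent spaces by $-\otimes\mathbb{Z}_p$ and $-\otimes\mathbb{Q}$. These let you run your Step 2 argument directly for $Z$ in every degree, so the Postnikov induction and the passage to the limit can be dropped. If you keep them, the limit step works because homotopy groups are eventually constant along Postnikov towers, not because of finite type.

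In Step 1, exactness after tensoring with $A$ holds because $\widehat{\mathbb{Z}}\otimes\mathbb{Q}$ is torsion-free, hence flat. Finite generation is what identifies $A\otimes\widehat{\mathbb{Z}}$ with $\prod_pA^\wedge_p$, and hence with the homotopy of $\prod_p Z^\wedge_p$.

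Finally, the relevant point about the lower-right corner is that $\prod_pZ^\wedge_p$ is nilpotent (indeed simple). So its rationalization exists and rationalizes homotopy groups, even though these groups are not finitely generated. Simplicity of $Z$ is also what lets you treat $\pi_1$ as an abelian group. This avoids the double-coset surjectivity argument needed for general nilpotent spaces.
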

Combining \Cref{thm:fracture-spaces} above with \cite[Theorem 11.1.1]{MP}, the universal coefficient theorem, and the Hurewicz theorem, we note the following standard result:
\begin{lemma} \label{lem:basic-0}  Let $X$ and $Y$ be simply connected, finite-type spaces. If $g\:X \to Y$ induces an isomorphism on cohomology with coefficients in $\Q$ and $\Z/p$ for all $p$, then $g$ is a homotopy equivalence. If $g$ induces a cohomology isomorphism in degrees less than or equal to $i$ with coefficients in $\mathbb Q$ and with coefficients $\mathbb F_p$ for all primes $p$, then $g$ induces an isomorphism on homotopy in degrees less than or equal to $i-1$, and a surjection in degree $i$. \end{lemma}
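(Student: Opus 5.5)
Both parts reduce to a statement about the homotopy fiber of $g$, after replacing $g$ by a fibration. Since $X$ and $Y$ are simply connected, the relative Hurewicz theorem applies to the pair $(M_g,X)$, where $M_g$ is the mapping cylinder. The goal is to show that the relative homology $H_*(Y,X;\Z)$ vanishes in the relevant range. Relative Hurewicz then gives vanishing of the relative homotopy groups $\pi_*(Y,X)$ in that range, and the long exact sequence of the pair turns this into the claims about $g_*$ on homotopy.

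To convert the cohomological hypotheses into integral homology, use the long exact sequence of the pair. If $g^*$ is an isomorphism on $H^*(-;k)$ for $k=\Q$ or $\mathbb F_p$ in degrees $\le i$, then $H^j(Y,X;k)=0$ for $j\le i$. In degree $j=i+1$ we have to be careful: the sequence only shows that $H^{i+1}(Y,X;k)\to H^{i+1}(Y;k)$ is injective. So the correct conclusion is $H^j(Y,X;k)=0$ for $j\le i$. Since all spaces are of finite type, the universal coefficient theorem over the field $k$ then gives $H_j(Y,X;k)=0$ for $j\le i$.

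Next, let $A=H_j(Y,X;\Z)$. This is a finitely generated abelian group by the finite type assumption. By induction on $j\le i$, assume the integral groups vanish in lower degrees. Then the universal coefficient theorem gives
\[
H_j(Y,X;k)\cong A\otimes k,
\]
because the Tor term from degree $j-1$ vanishes. The hypotheses give $A\otimes\Q=0$ and $A\otimes\mathbb F_p=0$ for every prime $p$. Together these force $A=0$, since a nonzero finitely generated abelian group has either a free summand or a $p$-torsion summand for some $p$. Hence $H_j(Y,X;\Z)=0$ for all $j\le i$.

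Because $X$ is simply connected and $Y$ is simply connected, the pair is $1$-connected. The relative Hurewicz theorem then gives $\pi_j(Y,X)=0$ for $j\le i$. In the long exact sequence
\[
\pi_j X\to \pi_j Y\to \pi_j(Y,X)\to \pi_{j-1}X\to \pi_{j-1}Y,
\]
this shows that $g_*$ is an isomorphism in degrees $\le i-1$ and a surjection in degree $i$. If the hypothesis holds in all degrees, the same argument shows $g$ is a weak equivalence. Since all spaces are assumed to have CW homotopy type, Whitehead's theorem makes $g$ a homotopy equivalence.

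The fracture square of \Cref{thm:fracture-spaces} is an alternative route: it identifies $g$ as an equivalence after rationalization and after each $p$-completion, via \cite[Theorem 11.1.1]{MP}. The direct Hurewicz argument above avoids needing simplicity hypotheses beyond simple connectivity. The only delicate step is the degree bookkeeping at the edge degree $i$, namely the failure of injectivity on $H^{i+1}$. This is exactly what produces surjectivity rather than isomorphism in degree $i$.
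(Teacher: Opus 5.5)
Your proof is correct. It takes a genuinely different, more elementary route than the one the paper indicates.

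The paper writes out no proof. It points to the fracture square of \Cref{thm:fracture-spaces} together with \cite[Theorem 11.1.1]{MP}:
\begin{itemize}
\item mod $p$ cohomology isomorphisms make $g$ an equivalence after $p$-completion;
\item the rational isomorphism makes $g_{\mathbb{Q}}$ an equivalence;
\item the pullback square assembles these;
\item universal coefficients and Hurewicz presumably supply the degreewise version.
\end{itemize}

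You instead work integrally with the pair $(M_g,X)$. You get vanishing of $H^j(M_g,X;k)$ for $j\le i$ and pass to $H_j(M_g,X;\mathbb{Z})=0$ by universal coefficients and finite generation. Relative Hurewicz and the homotopy exact sequence then finish. This treats both parts uniformly, uses no localization theory, and makes the edge-degree bookkeeping transparent: isomorphism through $i-1$, surjection at $i$. It also shows the rational hypothesis is redundant under finite type, since a finitely generated abelian group $A$ with $A\otimes\mathbb{F}_p=0$ for all $p$ is already zero. Your induction on $j$ is also unnecessary, because $H_j(-;\mathbb{Z})\otimes k$ injects into $H_j(-;k)$.

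What the paper's route buys is consistency with its prime-by-prime framework. It passes directly to the $p$-complete variant \Cref{lem:basic}. There, integral homology is no longer finitely generated over $\mathbb{Z}$, so your structure-theorem step would need replacing, for instance by a Nakayama-type argument on the first nonvanishing relative homotopy group.

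Two small expository points. Your opening sentence mentions the homotopy fiber, but the argument actually runs through the mapping cylinder pair. And simple connectivity already implies simplicity, so your closing remark about avoiding simplicity hypotheses gains nothing.
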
 
\Cref{fracture-square} allows us to work prime-by-prime when mapping into a simple, finite-type space:
\begin{cor}\label{cor:pcomplete}
Let $Y$ and $Z$ be spaces, with $Z$ simple and finite-type. Homotopy classes of maps from $Y$ to $Z$ are in bijection with compatible choices of maps from $Y \to Z_{\Q}$ and $Y \to \c{Z}{p}$.
\end{cor}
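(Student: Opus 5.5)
The plan is to read the corollary off \Cref{thm:fracture-spaces} by applying the functor $\map(Y,-)$ to the homotopy pullback square \Cref{fracture-square}. Mapping spaces out of a fixed space preserve homotopy pullbacks, so
\[\map(Y,Z)\simeq \map(Y,Z_{\Q})\times^h_{\map(Y,(\prod_p \c{Z}{p})_{\Q})}\map\Big(Y,\prod_p \c{Z}{p}\Big).\]
Since a product of spaces is a categorical product, the last factor is $\prod_p \map(Y,\c{Z}{p})$. The corollary says that a homotopy class $Y\to Z$ corresponds to a compatible family: a class $Y\to Z_{\Q}$, classes $Y\to \c{Z}{p}$ for each prime $p$, and a homotopy between the two induced maps into $(\prod_p\c{Z}{p})_{\Q}$.

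To pass to $\pi_0$, I will use the Mayer--Vietoris type exact sequence of a homotopy pullback of mapping spaces. A point of $\pi_0$ of the pullback is a pair of components agreeing in $\pi_0\map(Y,(\prod_p\c{Z}{p})_{\Q})$. Over a fixed such pair, the set of lifts is a quotient of $\pi_1$ of the base mapping space, namely the double coset of the images of the $\pi_1$'s of the two other mapping spaces. So surjectivity onto compatible pairs is immediate. Injectivity, or more precisely "bijection with compatible choices" including the choice of gluing homotopy, is exactly the statement that the homotopy pullback models $Z$.

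I will state the corollary in that form: compatible choices means the data including the gluing homotopy up to the evident equivalence. This is how it is used later in the paper, where one works prime by prime and the rational and gluing data are controlled separately.

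The only real obstacle is the hypothesis check and this $\pi_1$ ambiguity. The square is a homotopy pullback only for simple, finite-type $Z$, which is exactly what is assumed. In the intended applications $Y$ is a finite complex such as $\CP^{m}$ and $Z$ is, for example, a truncated fiber of $BU(n)\to BU$. Then the rational mapping spaces have computable $\pi_1$, and the double-coset indeterminacy can be analyzed, or shown to vanish, case by case. In the proof itself I will only assert the homotopy pullback of mapping spaces and the resulting description of $\pi_0$.
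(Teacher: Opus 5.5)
Your proposal is correct and follows the argument the paper intends: the paper gives no separate proof and reads the corollary off the fracture square of \Cref{thm:fracture-spaces} by mapping $Y$ into that homotopy pullback, as you do. Your care about the double-coset indeterminacy is warranted, since if ``compatible'' only means agreeing up to homotopy one gets merely a surjection for general $Y$; in the paper's application $Y=\CP^{n+2}$ this indeterminacy vanishes, because $\CP^{n+2}$ has only even cells and the rational corner of the square for $BU(n)$ has only even-dimensional homotopy.
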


Applying this to $Z=BU(n)$, which is simply connected and finite-type since its skeleta are Grassmannians, we note that $BU(n)_{\Q}\cong \prod_{i=1}^n K(\Q,2i)$, induced by the total Chern class map. In particular, rationalized vector bundles are uniquely determined by Chern classes.

\begin{cor}\label{cor:product} Fix a map $h\: \CP^{n+2} \to BU$ so that $c_{n+1}(h)=c_{n+2}(h)=0$. Let $\c{\Vect^h_n(\CP^{n+2})}{p}$ denote the set of isomorphism classes of lifts of $\c{h}{p}\:\CP^{n+2} \to \c{BU}{p}$ along the natural map $\c{BU(n)}{p}\to \c{BU}{p}$. Then:
\[\# \Vect^h_n(\CP^{n+2}) =\prod_p \# \c{\Vect^h_n(\CP^{n+2})}{p}.\]
\end{cor}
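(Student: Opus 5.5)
We view $\Vect^h_n(\CP^{n+2})$ as the set of homotopy classes of lifts of $h\:\CP^{n+2}\to BU$ along $\pi\:BU(n)\to BU$, and make it prime-local using the fracture square of \Cref{thm:fracture-spaces}. Apply \Cref{cor:pcomplete} to $Z=BU(n)$ and to $Z=BU$; both are simply connected and of finite type. A lift of $h$ to $BU(n)$ is then the same as a compatible family consisting of a rational lift of $h_\Q$ and, for each prime $p$, a $p$-complete lift of $\c{h}{p}$, all agreeing in $(\prod_p \c{BU(n)}{p})_\Q$.

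The first step is to pass from maps to the space of lifts. Write $F$ for the homotopy fiber of $\pi$. Since completion and rationalization preserve fiber sequences of nilpotent finite-type spaces, the fracture square for $BU(n)$ maps to the one for $BU$. Taking homotopy fibers over the given $h$, we get a homotopy pullback square for the lift space $\mathrm{Lift}(h)$:
\[
\mathrm{Lift}(h)\simeq \mathrm{Lift}(h_\Q)\times_{\mathrm{Lift}((\prod_p h_p)_\Q)}\prod_p \mathrm{Lift}(\c{h}{p}).
\]
Here we take the homotopy pullback of mapping spaces over $\CP^{n+2}$, which is legitimate because $\CP^{n+2}$ is a finite complex and mapping out of it commutes with these limits.

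The second step analyzes the rational terms. We have $BU(n)_\Q\simeq\prod_{i\le n}K(\Q,2i)$ and $BU_\Q\simeq\prod_{i\ge1}K(\Q,2i)$, and $\pi$ is the projection. So $F_\Q\simeq\prod_{i>n}K(\Q,2i-1)$. Over $\CP^{n+2}$, which has only even cells, obstructions to lifting $h_\Q$ live in $H^{2i}(\CP^{n+2};\Q)$ for $i=n+1,n+2$, and these are the rational images of $c_{n+1}(h)$ and $c_{n+2}(h)$. They vanish by hypothesis, so a rational lift exists. Moreover $\pi_0$ of the rational lift space is a torsor for $\prod_{i>n}H^{2i-1}(\CP^{n+2};\Q)=0$, and the relevant $\pi_1$ is a quotient of $H^{2i-2}(\CP^{n+2};\Q)$ in the appropriate degrees. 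The same reasoning applies to $(\prod_p \c{F}{p})_\Q$. Hence both rational lift spaces are nonempty, and each is connected on $\pi_0$.

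The third step takes $\pi_0$ of the pullback; this is the main obstacle. The Mayer--Vietoris sequence for a homotopy pullback expresses $\pi_0\mathrm{Lift}(h)$ as a quotient of $\pi_0\mathrm{Lift}(h_\Q)\times\prod_p\pi_0\mathrm{Lift}(\c{h}{p})$. Since the rational factor is a point, this is $\prod_p\pi_0\mathrm{Lift}(\c{h}{p})$, modulo the action of $\pi_1$ of the rational corner. We must show that this action is trivial, or more precisely that the double coset quotient is a bijection onto the product. Two facts make this work. First, $\pi_1$ of $\mathrm{Lift}(h_\Q)$ surjects onto $\pi_1$ of $\mathrm{Lift}((\prod_p h_p)_\Q)$. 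Second, both are built from $\Q$-vector spaces $H^{2i-2}(\CP^{n+2};\Q)$, with the second obtained from the first by tensoring the $\Z$-lattice data with $\hat{\Z}$ and then rationalizing, that is, by $\Q\to\mathbb{A}_f$. The remaining discrepancy is the cokernel of $\Q\oplus\hat\Z\to\mathbb A_f$, which is zero. I would verify this carefully using the naturality of Postnikov/obstruction descriptions in dimensions $2n+1,2n+3$. Because $F$ is $(2n)$-connected and the dimension is at most $2n+4$, the fiber is in its stable range here, so these sets carry abelian group structures compatible with the fracture square.

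Once this is established, $\Vect^h_n(\CP^{n+2})$ is in bijection with $\prod_p \c{\Vect^h_n(\CP^{n+2})}{p}$. Each factor is finite and trivial for $p>3$, since the relevant homotopy of $F$ in this range is torsion only at $2$ and $3$. Taking cardinalities gives the stated product formula.
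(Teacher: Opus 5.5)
Your fracture-square strategy is the same as the paper's. But by passing to lift spaces you take on the $\pi_1$ double-coset problem yourself, and your treatment of it, at the step you flag as the main obstacle, does not go through.

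First, the direction is off. Since both rational lift spaces are nonempty and connected, $\pi_0\mathrm{Lift}(h)$ \emph{surjects} onto $\prod_p\pi_0\mathrm{Lift}(\c{h}{p})$; it is not a quotient of that product. The fiber over $(g_p)$ is the double coset
\[
\pi_1\mathrm{Lift}(h_\Q)\,\backslash\,\pi_1\mathrm{Lift}\bigl((\textstyle\prod_p h_p)_\Q\bigr)\,/\,\pi_1\bigl(\textstyle\prod_p\mathrm{Lift}(\c{h}{p}),(g_p)\bigr),
\]
and what must be shown is that each of these is a single point.

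Second, your ``first fact'' is false. By your own computation, the left-hand map is
\[
\prod_{i>n}H^{2i-2}(\CP^{n+2};\Q)\cong\Q^3\longrightarrow\prod_{i>n}H^{2i-2}(\CP^{n+2};\mathbb{A}_f)\cong\mathbb{A}_f^3,
\]
induced by $\Q\subset\mathbb{A}_f$, and this is not onto. Had it been onto, there would be no ``remaining discrepancy''. The double cosets are points only because, for \emph{every} basepoint $(g_p)$, the image of $\pi_1(\prod_p\mathrm{Lift}(\c{h}{p}),(g_p))$ in $\mathbb{A}_f^3$ is open, i.e.\ contains $N\hat\Z^3$ for some $N\neq 0$. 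Then $\Q^3+N\hat\Z^3=\mathbb{A}_f^3$. That openness is the actual content of this step. The proposal only asserts it (``tensoring the $\Z$-lattice data with $\hat\Z$'') and defers the verification.

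The missing step can be supplied as follows. Every $(g_p)$ comes from a single integral lift $g$, by the surjectivity above. Since $\CP^{n+2}$ is a finite complex, $p$-completion commutes with the relevant mapping spaces and homotopy fibers. So $\pi_1(\mathrm{Lift}(\c{h}{p}),g_p)$ is the $p$-completion of the finitely generated group $\pi_1(\mathrm{Lift}(h),g)$, whose rationalization is $\Q^3$. Its image in $\mathbb{A}_f^3$ is therefore $L\otimes\hat\Z$ for a lattice $L\subset\Q^3$, which is open.

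The paper avoids this bookkeeping entirely by working with homotopy classes rather than lift spaces. It applies \Cref{cor:pcomplete} (where this $\pi_1$ issue is packaged) to $BU(n)$ and $BU$. The rational component of any lift of $h$ is forced by $c_1(h),\dots,c_n(h)$. Compatibility in the corner is then automatic, because $(\prod_p\c{BU(n)}{p})_\Q\to(\prod_p\c{BU}{p})_\Q$ is the inclusion $\prod_{i\le n}K(\mathbb{A}_f,2i)\to\prod_{i\ge 1}K(\mathbb{A}_f,2i)$ and so is injective on $[\CP^{n+2},-]$.

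Two smaller slips. The map $BU(n)_\Q\to BU_\Q$ is an inclusion of factors, not a projection; your formula for $F_\Q$ is the one for the inclusion. Also, identifying $\Vect^h_n(\CP^{n+2})$ with $\pi_0\mathrm{Lift}(h)$ uses $\pi_1(\mathrm{Map}(\CP^{n+2},BU),h)\cong K^1(\CP^{n+2})=0$, and likewise after $p$-completion.
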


For $p\geq 5$, it is easy to see that $\c{ {(c_{n+1} \times c_{n+2})}}{p}$ is the only obstruction to lifting a map $\CP^{n+2} \to \c{BU}{p}$ uniquely along $\c{BU(n)}{p} \to \c{BU}{p}$. In particular, we deduce:

\begin{cor}\label{cor:product23} Fix a map $h\: \CP^{n+2} \to BU$ so that $c_{n+1}(h)=c_{n+2}(h)=0$. Let $\c{\Vect^h_n(\CP^{n+2})}{p}$ be as in \Cref{cor:product}. Then:
\[\# \Vect^h_n(\CP^{n+2}) = \# \c{\Vect^h_n(\CP^{n+2})}{2} \cdot \# \c{\Vect^h_n(\CP^{n+2})}{3} \ .\]
\end{cor}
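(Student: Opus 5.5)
We derive \Cref{cor:product23} from \Cref{cor:product} by showing that $\#\c{\Vect^h_n(\CP^{n+2})}{p}=1$ for every prime $p\geq 5$. It then suffices to analyze, for such $p$, the $p$-complete lifting problem
\[
\c{BU(n)}{p}\longrightarrow \c{BU}{p}
\]
along $\c{h}{p}$. We use the Moore--Postnikov tower of this map through dimension $n+2$ on the source, since $\CP^{n+2}$ is $2(n+2)$-skeletal and only the stages of the fiber up to degree about $2n+4$ matter.

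The first step is to identify the homotopy fiber $F$ of $BU(n)\to BU$, namely $U/U(n)$, in the relevant range. It is $2n$-connected, with $\pi_{2n+1}F\cong\Z$, then $\pi_{2n+2}F$, $\pi_{2n+3}F$, $\pi_{2n+4}F$. We read these off from the fibration $U(n+2)/U(n)\to U/U(n)\to U/U(n+2)$, where the base is $(2n+4)$-connected, together with the Stiefel manifold $W_{n+2,2}=U(n+2)/U(n)$. The latter is a two-cell complex $S^{2n+1}\cup e^{2n+3}$ in low degrees, with a fiber sequence $S^{2n+1}\to W_{n+2,2}\to S^{2n+3}$.

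Rationally, $\pi_{2n+1}F$ and $\pi_{2n+3}F$ are $\Z$. The torsion in degrees $\le 2n+4$ comes from stable stems $\pi^s_1,\pi^s_2,\pi^s_3$ of the spheres and from the attaching map. These are $\Z/2$, $\Z/2$, $\Z/24$, which have no $p$-torsion for $p\ge5$. Hence after $p$-completion, through degree $2n+4$,
\[
\c{F}{p}\simeq K(\Z_p,2n+1)\times K(\Z_p,2n+3).
\]
This product splitting holds because there is no nontrivial $p$-primary $k$-invariant. The possible $k$-invariant lies in $H^{2n+4}(K(\Z_p,2n+1);\Z_p)$, whose $p$-torsion begins in degree $2n+1+2p-1\geq 2n+10$, so it vanishes. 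This torsion-vanishing claim is the main point to check carefully.

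The two $k$-invariants of the Moore--Postnikov tower for $\c{BU(n)}{p}\to\c{BU}{p}$ are then the primary obstructions in $H^{2n+2}(-;\Z_p)$ and $H^{2n+4}(-;\Z_p)$ of $\c{BU}{p}$. These are $\c{(c_{n+1})}{p}$ and $\c{(c_{n+2})}{p}$, up to units and decomposables. The decomposable terms must vanish, because their pullback to $BU(n)$ vanishes and the Chern classes there determine them. Alternatively, one can simply note that the second obstruction is well-defined modulo the indeterminacy coming from the first stage, which lands in odd-degree cohomology of $\CP^{n+2}$ and is zero.

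With these $k$-invariants, obstruction theory on $\CP^{n+2}$ finishes the argument. Lifts exist because $c_{n+1}(h)=c_{n+2}(h)=0$. The choices of lift at each stage are torsors under $H^{2n+1}(\CP^{n+2};\Z_p)=0$ and $H^{2n+3}(\CP^{n+2};\Z_p)=0$. Higher stages of the tower do not affect maps out of a complex of dimension $2n+4$. So the lift is unique, giving $\#\c{\Vect^h_n(\CP^{n+2})}{p}=1$.

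The only nontrivial step is confirming that the $p$-local homotopy of $U/U(n)$ in degrees $2n+2$ through $2n+4$ vanishes, including $\pi_{2n+4}$, which involves $\pi^s_3=\Z/24$. Since $p\geq5$ does not divide $24$, this holds, and \Cref{cor:product} then reduces the product to the primes $2$ and $3$.
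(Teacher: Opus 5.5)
Your proposal is correct and follows the same route as the paper. The paper simply asserts that for $p\geq 5$ the $p$-completed classes $c_{n+1}\times c_{n+2}$ are the only obstruction to lifting and that lifts are then unique, and combines this with \Cref{cor:product}; you supply the details it calls easy (no $p$-torsion in $\pi_{\leq 2n+4}(U/U(n))$, a two-stage Moore--Postnikov tower with Chern-class $k$-invariants, and vanishing odd cohomology of $\CP^{n+2}$), with only the minor slip that the second $k$-invariant a priori lives in $H^{2n+4}$ of the first stage, though your observation that $H^{*}(K(\mathbb{Z}_p,2n+1);\mathbb{Z}_p)$ has nothing in degrees $2n+2$ through $2n+4$ shows it comes from $BU$ modulo $c_1c_{n+1}$, which vanishes on $h$ anyway.
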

So, we have reduced the problem of computing $\#\Vect^h_n(\CP^{n+2})$ to $2$- and $3$-primary computations.

We will also use the following standard Whitehead theorem for $p$-complete spaces. \begin{lemma}\label{lem:basic} Let $p$ be a prime, let $i\geq 2$, and let $X$ and $Y$ be simply connected, finite-type, $p$-complete spaces. If $g\:X\to Y$ induces an isomorphism on cohomology with coefficients in $\Z/p$ in degrees less than or equal to $i$, then $g$ induces an isomorphism on homotopy groups in degrees less than or equal to $i-1$ and a surjection in degree $i$.\end{lemma}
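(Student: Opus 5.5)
The plan is to prove the Whitehead-type statement of \Cref{lem:basic} by reducing it to a comparison of homotopy fibers, using the Hurewicz theorem mod $p$ together with the fact that $p$-completeness is preserved under fibers. Let $F$ denote the homotopy fiber of $g\colon X\to Y$. Since $X$ and $Y$ are simply connected and $p$-complete, the long exact sequence of homotopy groups shows that $F$ is connected with abelian fundamental group. The groups $\pi_k X$ and $\pi_k Y$ are finitely generated $\Z_p$-modules, because for simply connected finite-type spaces, $p$-completion acts on homotopy groups as algebraic $p$-completion. Hence the homotopy groups of $F$ are finitely generated $\Z_p$-modules, and $F$ is a $p$-complete nilpotent space by \cite{MP}.

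The main step is to show that $F$ is $(i-1)$-connected mod $p$, that is, $\tilde H_k(F;\Z/p)=0$ for $k\le i-1$. I would pass from cohomology to homology by the universal coefficient theorem over the field $\Z/p$; finite type makes the dualization harmless. The hypothesis then says that $g_*$ is an isomorphism on $H_k(-;\Z/p)$ for $k\le i$.

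Next I would apply the Serre spectral sequence of $F\to X\to Y$ with $\Z/p$ coefficients, whose local system is trivial because $Y$ is simply connected. Suppose $\tilde H_k(F;\Z/p)$ is first nonzero in some degree $k\le i-1$. Then the edge map $H_{k+1}(X)\to H_{k+1}(Y)$ fails to be injective or $H_k(X)\to H_k(Y)$ fails to be injective. This is the standard comparison argument (equivalently, the relative Serre exact sequence in low degrees), and in either case it contradicts the hypothesis. So $\tilde H_{\le i-1}(F;\Z/p)=0$.

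To finish, I would apply the mod $p$ Hurewicz theorem to the nilpotent $p$-complete space $F$. It gives $\pi_k F\otimes \Z/p=0$, and also $\operatorname{Tor}(\pi_k F,\Z/p)=0$, for $k\le i-1$. Since the $\pi_k F$ are finitely generated $\Z_p$-modules, Nakayama's lemma forces $\pi_kF=0$ for $k\le i-1$. Feeding this into the long exact sequence
\[\pi_k F\to \pi_k X\to \pi_k Y\to \pi_{k-1}F\]
gives an isomorphism on $\pi_k$ for $k\le i-1$ and a surjection on $\pi_i$.

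The main obstacle is making the mod $p$ Hurewicz step rigorous when $\pi_1F$ may be nonzero, since $F$ need not be simply connected. I would handle this by first showing $\pi_1F=0$ separately: $\pi_1F$ is a quotient of $\pi_2Y$ by the image of $\pi_2 X$. The hypothesis on $H_2$ together with Hurewicz for the simply connected spaces $X$ and $Y$ identifies $\pi_2\otimes\Z/p$ with $H_2(-;\Z/p)$. So $\pi_2X\to\pi_2Y$ is surjective mod $p$, hence surjective by Nakayama, and therefore $\pi_1F=0$. After this, the ordinary Hurewicz argument applies inductively in each degree.

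Alternatively, one can avoid the fiber altogether. Apply \Cref{lem:basic-0}-style reasoning to the Postnikov sections $\tau_{\le i-1}$ of $X$ and $Y$, using that for $p$-complete spaces rational and mod $\ell$ ($\ell\ne p$) cohomology carry no information. I would keep this as a backup route.
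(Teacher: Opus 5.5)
The paper gives no proof of this lemma; it is quoted as a standard $p$-complete Whitehead theorem. Your main argument is a correct proof of it, and it is the standard one. The Serre exact sequence makes the homotopy fiber $F$ mod-$p$ acyclic through degree $i-1$. Next, $\pi_1F=0$ follows from $\pi_2$ and Nakayama. Then Hurewicz plus Nakayama kills $\pi_kF$ for $2\le k\le i-1$. This relies on reading \emph{finite type} for a $p$-complete space as ``homotopy groups finitely generated over $\Z_p$'', which is how the paper applies the lemma, to completions of finite-type spaces. A few details need fixing.

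\emph{The comparison step.} The low-degree sequence is
\[H_{k+1}(X;\Z/p)\to H_{k+1}(Y;\Z/p)\to H_k(F;\Z/p)\to H_k(X;\Z/p)\to H_k(Y;\Z/p).\]
So $H_k(F;\Z/p)\neq 0$ means the first map fails to be \emph{surjective} or the last fails to be injective. It does not mean the first map fails to be injective: for $g\colon *\to K(\Z/p,k+1)$ both maps are injective. Your contradiction survives because both maps are isomorphisms when $k+1\le i$.

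\emph{The Hurewicz step.} Mod-$p$ Hurewicz does not give $\operatorname{Tor}(\pi_{i-1}F,\Z/p)=0$ in the top degree. For example, $K(\Z/p^\infty,i-1)$ has vanishing reduced mod-$p$ homology through degree $i-1$ while $\operatorname{Tor}(\Z/p^\infty,\Z/p)\neq 0$. You never need this. If $F$ is $(k-1)$-connected with $2\le k\le i-1$, then $\pi_kF\otimes\Z/p\cong H_k(F;\Z)\otimes\Z/p\subseteq H_k(F;\Z/p)=0$, and Nakayama finishes.

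You can also avoid arguing that $\pi_kF$ is itself a finitely generated $\Z_p$-module. Apply Nakayama to the ends of $0\to\operatorname{coker}(\pi_{k+1}g)\to\pi_kF\to\ker(\pi_kg)\to 0$. Reducing mod $p$ gives $\ker(\pi_kg)/p=0$, so $\ker(\pi_kg)=0$. Then $\operatorname{coker}(\pi_{k+1}g)\cong\pi_kF$ vanishes mod $p$, hence is zero.

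\emph{The backup route.} Discard it. $p$-complete spaces are not of finite type integrally, and their rational cohomology is typically nonzero and large; for instance $H^n(K(\Z_p,n);\Q)\cong\operatorname{Hom}(\Z_p,\Q)\neq 0$. Your hypothesis gives no control over rational cohomology. So \Cref{lem:basic-0} cannot be applied to $\tau_{\le i-1}X\to\tau_{\le i-1}Y$ without already knowing the conclusion.
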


\subsection{Lifting calculus}\label{lifting}

We describe a classical framework for enumerating lifts along fibrations. Throughout this subsection, all spaces are pointed and all mapping spaces are pointed.

Suppose that we have two fiber sequences \begin{equation}\label[empty]{eq:fib1}P_1 \to P_0 \xrightarrow{w} C, \text{ and }\end{equation} \begin{equation}\label[empty]{eq:fib2}P_0 \xrightarrow{p} B \to K \end{equation} of pointed spaces. Suppose that $X$ is a simply connected space, and that we have a pointed map $g_0\:X\to P_0$ that lifts to $P_1$. We wish to enumerate homotopy classes of lifts of $g_0$ to $P_1$, i.e., fillers of the diagram below up to homotopy:
\begin{equation}\label[diagram]{diagram:lifting-aux}\begin{tikzcd}
& P_1\ar[d]\\
X \ar[r,"g_0"] \ar[ur, dashed]& P_0 \ar[r,"w"] &C
\end{tikzcd}
\end{equation}
 Under suitable hypotheses, the fibration \Cref{eq:fib2} can be used to solve the lifting problem of \Cref{diagram:lifting-aux}. Roughly speaking, this goes as follows: the first fiber sequence \Cref{eq:fib1} identifies the set of lifts with a cokernel of the induced map $\pi_1w_*\: \pi_1(P_0^X,g_0)\to \pi_1(C^X,0)$; the second supplies an action of $\Omega K$ on $P_0$, which induces an action on mapping spaces and often allows the image of $\pi_1w_*$ to be computed by varying an explicit cohomology class. 

To make this strategy more explicit, we begin with some elementary observations about \Cref{diagram:lifting-aux}.

\begin{lemma}\label{lem:lift-0}
Let $g_1\: X \to P_1$ be a lift of $g_0$. Then lifts of $g_0$ up to homotopy are in bijection with
elements in the orbit of $g_1$ under the action of $\pi_1(C^X,0)$ on $\pi_0(P_1^X,g_1)$, which is furthermore identified with the cosets of the image of \[\pi_1w_*:=\pi_1(w\circ -)\: \pi_1(P_0^X,g_0)\to  \pi_1(C^X,0).\]
\end{lemma}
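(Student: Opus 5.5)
We apply the long exact sequence of homotopy groups of pointed mapping spaces coming from the fiber sequence \Cref{eq:fib1}. Since $X$ is simply connected and the mapping spaces are pointed, mapping $X$ into \Cref{eq:fib1} gives a fiber sequence
\[
(P_1^X, g_1) \to (P_0^X, g_0) \xrightarrow{w\circ -} (C^X, 0),
\]
whose fiber over the constant map $0$ is the space of maps into $P_1$, based at $g_1$. Here we use that $w\circ g_0\simeq 0$, since $g_0$ lifts; fixing a nullhomotopy lets us identify the relevant fiber with the homotopy fiber over the basepoint. The long exact sequence then ends with
\[
\pi_1(P_0^X,g_0)\xrightarrow{\pi_1 w_*}\pi_1(C^X,0)\xrightarrow{\partial}\pi_0(P_1^X,g_1)\to\pi_0(P_0^X,g_0)\to\pi_0(C^X,0).
\]

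The lifts of $g_0$ up to homotopy are the elements of $\pi_0(P_1^X)$ lying over the component of $g_0$. To make this precise, I will work with homotopy classes of lifts over $g_0$, i.e.\ components of the homotopy fiber of $P_1^X\to P_0^X$ over $g_0$; the paper's ``lifts up to homotopy'' should mean this. By exactness at $\pi_0(P_1^X)$ in the extended sense, these are exactly the $\pi_1(C^X,0)$-orbit of $g_1$. Concretely, a loop $\gamma$ in $C^X$ at $0$ acts by transporting $g_1$ along $\gamma$, that is, by lifting the loop via the homotopy lifting property.

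Finally, the standard extension of exactness to the $\pi_1$ term says that $\partial(\alpha)=\partial(\beta)$ if and only if $\alpha$ and $\beta$ lie in the same coset of $\mathrm{im}(\pi_1w_*)$. Hence the orbit of $g_1$ is in bijection with $\pi_1(C^X,0)/\mathrm{im}(\pi_1 w_*)$, where the cosets may be taken to be right cosets.

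I expect the main subtlety to be in the set-up rather than in these algebraic steps:
\begin{itemize}
\item justifying that the fiber of the map on mapping spaces is $P_1^X$, which follows because taking pointed maps out of $X$ preserves fibrations;
\item treating the basepoint $w\circ g_0$ as $0$, which requires fixing the nullhomotopy;
\item making precise what ``lifts up to homotopy'' means, namely homotopy over $g_0$ rather than free homotopy in $P_1^X$.
\end{itemize}
Once these are handled, the group $\pi_1(C^X,0)\cong[\Sigma X, C]$ is in fact abelian whenever $C$ is an H-space or infinite loop space, as it will be in our applications, so the left/right coset distinction disappears.
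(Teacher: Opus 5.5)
Your core argument is exactly the paper's proof. Pointed mapping spaces out of $X$ preserve the fiber sequence \Cref{eq:fib1}. Exactness at $\pi_0(P_1^X,g_1)$ identifies the classes lying over $[g_0]$ with the $\pi_1(C^X,0)$-orbit of $g_1$, and exactness at $\pi_1(C^X,0)$ identifies the stabilizer with $\operatorname{im}(\pi_1 w_*)$. The one thing to fix is your ``making precise'' of the notion of lift. It is backwards, and taken literally it would make the lemma false.

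Components of the homotopy fiber of $P_1^X\to P_0^X$ over $g_0$ (lifts up to homotopy \emph{over} $g_0$) do not form the orbit you want. Since $\Omega C\to P_1\to P_0$ is a fiber sequence, this homotopy fiber is the space of paths in $C^X$ from $w\circ g_0$ to $0$, which is equivalent to $\Omega(C^X)$. Its set of components is therefore a torsor for \emph{all} of $\pi_1(C^X,0)$, with no quotient. The quotient by $\operatorname{im}(\pi_1w_*)$ appears exactly when you pass from this set to $\pi_0(P_1^X)$. In the exact sequence $\pi_1(P_0^X,g_0)\to\pi_0(\mathrm{hofib}_{g_0})\to\pi_0(P_1^X)\to\pi_0(P_0^X)$, two lifts over $g_0$ become homotopic in $P_1^X$ precisely when they are related by the action of a loop at $g_0$ in $P_0^X$, and that action goes through $\pi_1 w_*$.

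So the right notion is the one you wrote first, which is also the one the paper uses: classes in $\pi_0(P_1^X)$ (pointed homotopy in $P_1^X$, with the projection allowed to move) whose image in $\pi_0(P_0^X)$ is $[g_0]$. Your exactness argument is in fact about this set. The distinction matters downstream. In \Cref{main3count}, when $c_1(h)$ or $c_2(h)$ is nonzero mod $3$, the image of $\pi_1U_*$ is all of $\Z/3$ and there is a single lift. Counting lifts up to homotopy over $f$ would instead give $3$ regardless of $h$. Delete the sentence identifying the paper's notion with the homotopy fiber and reverse your third bullet, and the proof is complete.
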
 
\begin{proof} We have a fiber sequence
\[  (P_1^X,g_1) \to  (P_0^X,g_0) \to (C^X,0),\]
and so a long exact sequence (of pointed sets at the $\pi_0$-portion):
\[ \pi_1(P_0^X,g_0) \xrightarrow{\pi_1w_*} \pi_1(C^X,0) \to \pi_0(P_1^X,g_1) \to \pi_0(P_0^X,g_0).\]
In particular, the preimage of the basepoint in  $\pi_0(P_0^X,g_0)$ is the set of homotopy classes of all pointed maps $X \to P_1$ that are homotopy lifts of $g_0$. 
This set is precisely the orbit of the basepoint under the natural action of $\pi_1(C^X,0)$ on $\pi_0(P_1^X,g_1)$, which is in bijection with cosets of the stabilizer of $g_1$. By exactness, the stabilizer is the image of $\pi_1w_*$.
\end{proof}

\begin{cor}\label{cor:coker-1} With set-up as in \Cref{lem:lift-0}, if $\pi_1(C^X,0)$ is abelian, then homotopy lifts of $g_0$ to $P_1$ are in bijection with \[\op{coker}\left(\pi_1w_*\:  \pi_1(P_0^X,g_0)\to  \pi_1(C^X,0)\right).\]\end{cor}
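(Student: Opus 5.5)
The plan is to deduce this directly from \Cref{lem:lift-0}, since all the substance is already there; what remains is to identify the set of cosets of a stabilizer with a group cokernel when the acting group is abelian.

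First, \Cref{lem:lift-0} tells us that homotopy lifts of $g_0$ to $P_1$ are in bijection with the orbit of $g_1$ under the action of $\pi_1(C^X,0)$ on $\pi_0(P_1^X,g_1)$. By the orbit--stabilizer principle, this orbit is in bijection with the set of left cosets $\pi_1(C^X,0)/\mathrm{Stab}(g_1)$.

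Second, exactness of the long exact sequence in the proof of \Cref{lem:lift-0} identifies $\mathrm{Stab}(g_1)$ with the image of $\pi_1w_*$. To use this I will make sure that exactness at $\pi_1(C^X,0)$ is in the strong sense. For a fibration $F\to E\to B$ with $F$ the fiber over the basepoint, the connecting action of $\pi_1(B)$ on $\pi_0(F)$ has, as stabilizer of the basepoint component, exactly the image of $\pi_1(E)\to\pi_1(B)$. This is the standard fact (e.g.\ from path lifting), and I will cite it rather than reprove it.

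Third, since $\pi_1(C^X,0)$ is abelian, the image of the group homomorphism $\pi_1w_*$ is a normal subgroup. The coset space is therefore the quotient group $\pi_1(C^X,0)/\operatorname{im}(\pi_1w_*)$, which is by definition $\operatorname{coker}(\pi_1w_*)$. Composing the bijections gives the claim.

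I expect no real obstacle here. The only point needing care is that the bijection with the cokernel depends on the choice of base lift $g_1$: the identification sends $[\gamma]\mapsto \gamma\cdot g_1$, so $g_1$ corresponds to $0$. In many applications $C$ is an infinite loop space (a product of Eilenberg--Mac Lane spaces), so that $\pi_1(C^X,0)$ is automatically abelian; this can be noted but is not needed for the proof.
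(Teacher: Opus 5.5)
Your proposal is correct and follows the paper's route: the corollary is read off from \Cref{lem:lift-0}, which identifies lifts with the orbit of $g_1$ and exactness identifies the stabilizer with $\operatorname{im}(\pi_1w_*)$, and commutativity of $\pi_1(C^X,0)$ makes that image normal, so the coset space is the cokernel. Your remark that the bijection depends on the choice of base lift $g_1$ matches how the paper later uses the word ``torsor.''
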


We now come to the role of the fiber sequence \Cref{eq:fib2}. The action of loops on the base on the total space arising from \Cref{eq:fib2} gives a map
\begin{equation}\label{def:tildem1}  m\: \Omega K \times  P_0 \to  P_0.\end{equation}
\begin{lemma}\label{lem:lift-22}
Let $g_0\: X \to P_0$ and let $g=p\circ g_0\: X \to B$.
Suppose also that the morphism $\pi_1(P_0^X,g_0) \to \pi_1(B^X,g)$ is zero, and that $\pi_1(C^X,0)$ is abelian. 
Fix any element $y  \in \pi_1(P_0^X,g_0)$. Then the set of homotopy classes of lifts $g_0$ to $P_1$, as depicted in \Cref{diagram:lifting-aux},
is a torsor for the quotient of $\pi_1(C^X,0)$ by the subgroup generated by
elements of the form \[(\pi_1w_*  m_* ) (x,y),\]
as $x$ ranges over $\pi_1((\Omega K)^X,0)$.
\end{lemma}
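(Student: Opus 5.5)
By \Cref{cor:coker-1}, the set of homotopy lifts of $g_0$ to $P_1$ is in bijection with the cokernel of $\pi_1w_*\colon \pi_1(P_0^X,g_0)\to \pi_1(C^X,0)$. It is therefore a torsor for this cokernel. The plan is to show that the image of $\pi_1w_*$ equals the subgroup $S$ generated by the elements $(\pi_1w_*m_*)(x,y)$ with $x\in\pi_1((\Omega K)^X,0)$. The bijection of \Cref{cor:coker-1} depends on the basepoint $g_1$, so it is the torsor structure, not a canonical group, that we record.

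\textbf{Step 1: the image of $\pi_1w_*$ is contained in $S$.} Use the fiber sequence $P_0^X \to B^X \to K^X$ on mapping spaces, taken at the components of $g_0$ and $g$. It gives an exact sequence
\[\pi_1((\Omega K)^X,0)\cong\pi_2(K^X,0)\to \pi_1(P_0^X,g_0)\to \pi_1(B^X,g).\]
By hypothesis the right map is zero, so $\pi_1(\Omega K^X)$ acts transitively on $\pi_1(P_0^X,g_0)$ in the appropriate sense. The action in question is the one induced by $m$ on loops: a loop $x$ in $(\Omega K)^X$ based at $0$, paired pointwise with a loop $\gamma$ in $P_0^X$ based at $g_0$, yields the loop $m_*(x,\gamma)$. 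The key claim is that for any fixed $y$, every element of $\pi_1(P_0^X,g_0)$ has the form $m_*(x,y)$. I would prove this by the standard fact that the boundary map of a principal fibration is realized by the action: $m_*(x,y)=\partial(x)\cdot y$ in $\pi_1$. This holds because $m$ restricted to $\Omega K\times\{*\}$ is the fiber inclusion, and the loop product on $\pi_1(P_0^X)$ agrees with the pointwise product up to homotopy (an Eckmann--Hilton argument in the mapping space). Combined with exactness, this gives surjectivity. Hence every element of the image of $\pi_1w_*$ equals $(\pi_1w_*m_*)(x,y)$ for some $x$, so the image lies in $S$.

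\textbf{Step 2: $S$ is contained in the image.} This direction is trivial, since each generator $(\pi_1w_*m_*)(x,y)$ is by definition $\pi_1w_*$ applied to the element $m_*(x,y)$ of $\pi_1(P_0^X,g_0)$. Moreover the image of a homomorphism is already a subgroup, so "the subgroup generated by" agrees with the image.

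\textbf{Main obstacle.} The delicate point is the identity $m_*(x,y)=\partial(x)\cdot y$, that is, compatibility of the $\Omega K$-action with the group structure on $\pi_1$ at a non-base component $g_0$. I would handle it by writing a loop in $P_0^X$ as a map $S^1\times X\to P_0$ and using that $m(x,-)$ is homotopic to the identity through the contraction of $x$ along $\Omega K$. Composing the product loop $x\ast \mathrm{const}$ with $\mathrm{const}\ast y$ gives the required homotopy. Abelianness of $\pi_1(C^X,0)$ makes the torsor and cokernel statements well-defined without normality issues.
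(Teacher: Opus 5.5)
Your proposal is correct and follows the paper's proof. The exact sequence $\pi_2(K^X,0)\to\pi_1(P_0^X,g_0)\to\pi_1(B^X,g)$, with vanishing right map, makes the boundary $x\mapsto \pi_1m_*(x,g_0)$ surjective; the induced action is therefore transitive, and the image of $\pi_1w_*$ is exactly the set of elements $(\pi_1w_*m_*)(x,y)$ for any fixed $y$.

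Your explicit check that $m_*(x,y)=\partial(x)\cdot y$ spells out a step the paper passes over quickly. It follows most directly from writing $(x,y)=(x,\mathrm{const})\cdot(\mathrm{const},y)$ in $\pi_1$ of the product and using $m(0,-)\simeq\mathrm{id}$, so no Eckmann--Hilton argument is needed.
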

\begin{proof} Note that we get a fiber sequence of pointed spaces $(P_0^X,g_0) \to (B^X,g) \to (K^X,0)$. On homotopy, this gives an exact sequence
\[ \pi_2(K^X,0) \to \pi_1(P_0^X,g_0) \to  \pi_1(B^X,g).\]
We have an identification $\pi_2(K^X,0) \cong \pi_1((\Omega K)^X,0) $ under which the map $\pi_2(K^X,0) \to \pi_1(P_0^X,g_0)$ is given by $\pi_1m_*(-,g_0)$, where $m$ is as \Cref{def:tildem1}, $m_*$ is the map on function spaces given by postcomposition with $m$, $\pi_1m_*$ is the induced map on first homotopy, and $g_0\in \pi_1(P_0^X,g_0)$ denotes the constant loop at $g_0\: X \to P_0$. 

Since the image of $\pi_1(P_0^X,g_0) \to \pi_1(B^X,g)$ is zero, the map $\pi_1m_*(-,g_0)$ is surjective and the action
\[\pi_1m_*\: \pi_1((\Omega K)^X,0) \times \pi_1(P_0^X,g_0) \to \pi_1(P_0^X,g_0)\]
induced by $m$ is transitive. So, to compute the image of $\pi_1w_*$, it suffices to fix one element $y \in \pi_1(P_0^X,g_0)$ and compute $(\pi_1w_*m_* )(x,y)$ as $x$ varies over $\pi_1((\Omega K)^X,0)$. 
\end{proof}
\begin{rmk} Note, in particular, that \Cref{lem:lift-22} applies in the case that $\pi_1(B^X,g)=0$.\end{rmk}

When $C$ is an Eilenberg--Mac Lane space, we can interpret the conclusion of \Cref{lem:lift-22} cohomologically.

\begin{cor}\label{lem:lift-2} With set-up as in \Cref{lem:lift-22}, suppose that $C=\prod_iK(G_i,l_i)$ is a finite product of Eilenberg--Mac Lane spaces, so that we may view the homotopy class
\[w\:P_0\to C\] as an element in the product of cohomology groups $ \prod_iH^{l_i}(P_0,G_i).$
Then, under the identification \[\pi_1(C^X,0)\cong \prod_iH^{l_i-1}(X,G_i)\cong \prod_iH^{l_i}(\Sigma X,G_i),\] the image of $\pi_1w_*$  can be computed as follows. 
\begin{itemize} 
\item Fix any $y\: S^1 \times X \to P_0$ representing an element in $\pi_1(P_0^X,g_0)$;
\item Let $x$ vary over $[\Sigma X, \Omega K],$ where we represent homotopy classes as maps $S^1 \times X \to \Omega K$ that restrict to the constant map at the base point on $\{*\} \times X$ and $S^1 \times \{*\}$.
\item The image of $\pi_1w_*$ consists of all homotopy classes of the form 
\[ S^1 \times X \xrightarrow{(x,y)} \Omega K \times P_0 \xrightarrow{m} P_0 \xrightarrow{w} C,\] which in fact give maps $\Sigma X \to C$.
\end{itemize}
In other words, the image of $\pi_1w_*$ can be computed cohomologically as the image of all classes of the form 
\[(x,y)^*m^*w\in \prod_{i}H^{l_i}(\Sigma X,G_i),\] for an appropriate choice of $y\: S^1 \times X \to P_0$ fixed and $x$ varying over $[\Sigma X,\Omega K].$
\end{cor}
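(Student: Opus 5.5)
The plan is to deduce this directly from \Cref{lem:lift-22} by translating each homotopy group of a mapping space into a set of homotopy classes of maps out of $S^1\times X$. This is legitimate because $C=\prod_i K(G_i,l_i)$ is a product of Eilenberg--Mac Lane spaces, so $C^X$ is a grouplike $H$-space and all of its components are homotopy equivalent.

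First, I identify the target group. A based loop in $(C^X,0)$ is the same as a map $S^1\times X\to C$ that is constant at the basepoint on $S^1\vee X$, i.e.\ a map $\Sigma X=S^1\wedge X\to C$. Hence
\[\pi_1(C^X,0)\cong[\Sigma X,C]\cong\prod_iH^{l_i}(\Sigma X,G_i)\cong\prod_iH^{l_i-1}(X,G_i).\]
In particular $\pi_1(C^X,0)$ is abelian, so \Cref{lem:lift-22} (via \Cref{cor:coker-1}) applies. Similarly, $\pi_1((\Omega K)^X,0)\cong[\Sigma X,\Omega K]$, and these classes are represented by maps $x\colon S^1\times X\to\Omega K$ that are constant on $\{*\}\times X$ and on $S^1\times\{*\}$. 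Likewise, an element of $\pi_1(P_0^X,g_0)$ is represented by a map $y\colon S^1\times X\to P_0$ that restricts to $g_0$ on $\{*\}\times X$ and to the basepoint on $S^1\times\{*\}$.

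Second, I compute the composite $w\circ m\circ(x,y)\colon S^1\times X\to C$. I choose the action map $m$ so that $m(\mathrm{const},-)=\mathrm{id}_{P_0}$ strictly; this is possible by replacing $P_0\to B$ by a fibration with a strict holonomy action. With that choice, the composite agrees with $w\circ g_0$ on $\{*\}\times X$ and is constant on $S^1\times\{*\}$. Thus it is a loop in $C^X$ based at $w\circ g_0$, not at $0$. Since $g_0$ lifts to $P_1$, the map $w\circ g_0$ is nullhomotopic. Because $C^X$ is a grouplike $H$-space, translating by the inverse of $w\circ g_0$ identifies $\pi_1(C^X,w g_0)$ with $\pi_1(C^X,0)$ canonically, i.e.\ independently of the chosen nullhomotopy. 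This identification is exactly the one implicit in the long exact sequence used in \Cref{lem:lift-0}.

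Cohomologically, I use the stable splitting $\Sigma(S^1\times X)\simeq\Sigma(S^1\vee X\vee\Sigma X)$, which gives
\[H^{l}(S^1\times X)\cong H^l(X)\oplus H^{l}(\Sigma X).\]
Here I use that $X$ is simply connected and pointed, so $H^l(S^1)=0$ for the relevant degrees $l\ge2$. Under this splitting, the basepoint translation simply discards the $H^l(X)$-summand, which is $g_0^*w$. What remains is the $H^{l_i}(\Sigma X,G_i)$-component of $(x,y)^*m^*w$, and this is the class of the corresponding element $(\pi_1w_*m_*)(x,y)$. By \Cref{lem:lift-22}, as $x$ ranges over $[\Sigma X,\Omega K]$ with $y$ fixed, these elements exhaust the image of $\pi_1w_*$.

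I expect the main obstacle to be the basepoint bookkeeping in the second step: checking that the composite really defines a class on $\Sigma X$, and that the $H$-space translation agrees with the boundary identification in \Cref{lem:lift-0}. Once the projection onto the $\Sigma X$-summand is justified, the remaining verifications are formal.
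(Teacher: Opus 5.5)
Your proposal is correct and follows the paper's argument: you represent $x$ and $y$ as maps out of $S^1\times X$, invoke \Cref{lem:lift-22}, and observe that $w\circ m\circ(x,y)$ is $w\circ g_0\simeq 0$ on $\{*\}\times X$ and constant on $S^1\times\{*\}$, so it descends to $\Sigma X$. Your strictly unital action, $H$-space translation and splitting of $H^{l}(S^1\times X)$ only make the paper's basepoint bookkeeping explicit. One small correction: the $H^{l}(S^1)$-summand vanishes because the composite is constant on $S^1\times\{*\}$, not because $X$ is simply connected.
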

\begin{proof} To compute $\pi_1w_*m_*$ we represent the classes $x$ and $y$ from 
\Cref{lem:lift-22} as $x\:S^1 \times X \to \Omega K$ and $y\: S^1 \times X \to P_0$. The requirement that these represent elements in $\pi_1((\Omega K)^X,0)$ and $\pi_1(P_0^X,g_0)$, respectively, precisely amounts to the condition that $x$ restricts to a constant map on $\{*\} \times X$ and $S^1 \times \{*\}$, and that $y$ restricts to $g_0$ on $\{*\} \times X$. With this in hand, elements in the image of $\pi_1w_*$ in $\pi_1(C^X,0)$ are precisely the composites indicated in the displayed equation of the third item. 

To see that such composites induce a map from the suspension, note that $w\circ m\circ (x,y)$ is homotopic to $w \circ g_0\cong 0$ when restricted to $\{*\}\times X$, and is constant when restricted to $S^1\times\{*\}$, since both $x$ and $y$ restrict to the basepoint there. The description in terms of cohomology is immediate.
\end{proof}

\begin{rmk}\label{rmk:practice} With the set-up of \Cref{lem:lift-2}, it is in practice usually easiest to determine the image of $\pi_1w_*$ by computing \[(x,y)^*m^*w-(0,y)^*m^*w\] as $x$ varies over $[\Sigma X,\Omega K].$ \end{rmk} 

We also note another lifting-type lemma, which is an elementary consequence of classical Postnikov theory, but is useful to codify.

\begin{lemma}\label{lem:lift-pin} Suppose that $f\:X \to Y$ is a map of simply connected finite-type spaces so that  the $k$-truncation $\tau_{\leq k}f\: \tau_{\leq k} X\to \tau_{\leq k}Y$ is a homotopy equivalence for some $k\geq 2$, and that $Y$ sits in a fiber sequence 
\[ Y \to \tau_{\leq k} Y \xrightarrow{k} K(\pi_{k+1}Y,k+2).\] 
Then $k\: \tau_{\leq k}Y \to K(\pi_{k+1}Y,k+2)$ lifts to $\tilde{k}$ as indicated in \Cref{diagram-tildek} below. 
\begin{equation}\label[diagram]{diagram-tildek}
\begin{tikzcd}
& K(\pi_{k+1}X,k+2)\ar[d,"{K(\pi_{k+1}f,k+2)}"]\\
\tau_{\leq k}Y \ar[r,"k"] \ar[ur, dashed,"\tilde{k}"]& K(\pi_{k+1}Y,k+2).\end{tikzcd}
\end{equation}
Moreover, $\tau_{\leq k+1} X = \fib\left( \tilde{k}\: \tau_{\leq k} Y \to K(\pi_{k+1}X,k+2)\right)$.
\end{lemma}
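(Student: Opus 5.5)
We prove \Cref{lem:lift-pin} by comparing the Postnikov towers of $X$ and $Y$ along $f$. Write $\pi=\pi_{k+1}$. Since $X$ is simply connected, $\tau_{\leq k+1}X$ is a principal fibration over $\tau_{\leq k}X$ classified by its $k$-invariant $k_X\:\tau_{\leq k}X\to K(\pi_{k+1}X,k+2)$:
\[\tau_{\leq k+1}X\to \tau_{\leq k}X\xrightarrow{k_X} K(\pi_{k+1}X,k+2).\]
The same holds for $Y$. The plan is to define $\tilde{k}:=k_X\circ(\tau_{\leq k}f)^{-1}$, using the hypothesis that $\tau_{\leq k}f$ is a homotopy equivalence, and then check two things. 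First, $K(\pi_{k+1}f,k+2)\circ\tilde{k}\simeq k$. Second, the fiber of $\tilde{k}$ is $\tau_{\leq k+1}X$. The second is immediate: $\tilde{k}$ differs from $k_X$ only by precomposition with a homotopy equivalence, so the two maps have equivalent homotopy fibers. (We read $k$ in the displayed fiber sequence of the statement as the $k$-invariant $\tau_{\leq k}Y\to K(\pi_{k+1}Y,k+2)$ classifying $\tau_{\leq k+1}Y$.)

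For the first claim we use naturality of Postnikov towers. The map $f$ induces a map of principal fibrations $\tau_{\leq k+1}X\to\tau_{\leq k+1}Y$ lying over $\tau_{\leq k}f$, and on fibers this map is $K(\pi_{k+1}f,k+1)$. The standard naturality of $k$-invariants then gives
\[k_Y\circ\tau_{\leq k}f\simeq K(\pi_{k+1}f,k+2)\circ k_X.\]
One can see this concretely: $k$-invariants are the transgressions of the fundamental class of the fiber, $H^{k+2}(\tau_{\leq k}X;\pi_{k+1}X)\cong \mathrm{Hom}(\pi_{k+1}X,\pi_{k+1}X)$, and transgression is natural for maps of fibrations. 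Precomposing this relation with $(\tau_{\leq k}f)^{-1}$ gives $k\simeq K(\pi_{k+1}f,k+2)\circ\tilde{k}$, which is the asserted lift in \Cref{diagram-tildek}.

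The one point needing care is the naturality of the $k$-invariant with respect to the change-of-coefficients map $\pi_{k+1}X\to\pi_{k+1}Y$. I would justify it in one of two ways. One option is the functorial model $\tau_{\leq k+1}Z\simeq\fib(\tau_{\leq k}Z\to K(\pi_{k+1}Z,k+2))$ obtained from the principal refinement of the Postnikov tower, which is functorial for simply connected spaces. The other is to compare obstruction classes via the transgression argument above. Simple connectivity makes all the fibrations principal, so no local coefficient systems arise, and the finite-type hypothesis plays no role beyond keeping everything within CW complexes.
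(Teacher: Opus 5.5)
Your proof is correct and is the classical Postnikov-theoretic argument the paper has in mind; the paper gives no proof beyond calling the lemma an elementary consequence of classical Postnikov theory. You set $\tilde{k}=k_X\circ(\tau_{\leq k}f)^{-1}$ and use naturality of $k$-invariants, $k_Y\circ\tau_{\leq k}f\simeq K(\pi_{k+1}f,k+2)\circ k_X$. One small slip: the isomorphism with $\mathrm{Hom}(\pi_{k+1}X,\pi_{k+1}X)$ describes $H^{k+1}(K(\pi_{k+1}X,k+1);\pi_{k+1}X)$, where the fundamental class lives, not $H^{k+2}(\tau_{\leq k}X;\pi_{k+1}X)$, where its transgression $k_X$ lives.
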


\section{Preliminary results}\label{structure}
We next study relationships between different bundle enumeration problems.  These results substantially reduce the complexity and length of subsequent calculations. 

We begin by explaining a useful action of homotopy groups on homotopy classes of maps.
\begin{const}\label{ACTION} Let $X$ be a simply connected $k$-dimensional CW complex. Fix a top-dimensional cell $D$. Let $D'\subset D$ be a closed ball in the interior of $D$, with boundary $\partial D'$. Let $p_D\: X \to X \vee S^{k}$ denote the natural map $X \to X/\partial D'$. Given any simply connected space $Y$, we obtain an action 
\[\pi_{k}Y \times [X,Y]\to [X,Y]\] given by $\left( [\sigma\: S^{k} \to Y],[f\: X \to Y] \right)\mapsto [\sigma f\: X \xrightarrow{p_D} X \vee S^{k} \xrightarrow{f \vee \sigma} Y].$
\end{const}
\begin{rmk} We assume all spaces are simply connected to avoid basepoint issues. For non-simply-connected spaces, the analogous construction applies to homotopy classes of pointed maps. \end{rmk}

Applying \Cref{ACTION} with $Y=BU(n)$ gives a useful way to modify vector bundles, often in a way that does not change the $K$-theory class of the bundle. Suppose that $X$ has top cell in dimension $k$, $k>2n$, and that $V\: X \to BU(n)$ is given. Then $\sigma V$ and $V$ have the same Chern classes. If $X$ has even cells, they also have the same $K$-theory class. Thus, if $X$ has even cells through dimension $k>2n$, \Cref{ACTION} induces an action of $\pi_{k}BU(n)$ on $\Vect^h_n(X)$. We explore this further in the case of $X=\CP^{n+2}$.

\begin{lemma}\label{lem1} Let $\eta$ be a vector bundle over $\CP^{n+2}$ with $K$-theory class $h \in [\CP^{n+2},BU]$.  Then $\eta'\in \Vect^h_n(\CP^{n+2})$ if and only if $\eta' \simeq \sigma \eta$ for some $\sigma \in \pi_{2n+4}BU(n)$, where the action is as described in \Cref{ACTION}.
\end{lemma}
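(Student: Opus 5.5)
The plan is to prove the two directions separately. The "if" direction is essentially already explained after \Cref{ACTION}. The "only if" direction will use obstruction theory on the cells of $\CP^{n+2}$ together with the vanishing of the relevant homotopy of the fiber of $BU(n)\to BU$ in low degrees.

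\emph{If.} Let $\sigma\in\pi_{2n+4}BU(n)$. The bundle $\sigma\eta$ agrees with $\eta$ on $\CP^{n+2}\setminus \mathrm{int}(D')$, which deformation retracts onto $\CP^{n+1}$. Its stabilization is $h\vee(\text{stabilization of }\sigma)$ composed with the pinch map $p_D$. Since $\pi_{2n+4}BU(n)\to\pi_{2n+4}BU\cong\Z$ is detected by the Chern character, i.e.\ by $c_{n+2}$, and $c_{n+2}$ of a rank $n$ bundle vanishes, the image of $\sigma$ in $\pi_{2n+4}BU$ is zero. Hence $\sigma\eta$ and $\eta$ have the same $K$-theory class, so $\sigma\eta\in\Vect^h_n(\CP^{n+2})$.

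\emph{Only if.} Let $F$ be the homotopy fiber of $BU(n)\to BU$, so $F\simeq U/U(n)$. This space is $2n$-connected, and its first homotopy groups are $\pi_{2n+1}F\cong\Z$ and $\pi_{2n+2}F=0$. Now take $\eta,\eta'$ with the same stabilization $h$.

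First, I restrict to $\CP^{n}$, which has dimension $2n$. Since $F$ is $2n$-connected, the map $[\CP^{n},BU(n)]\to[\CP^{n},BU]$ is a bijection, so $\eta|_{\CP^{n}}\simeq\eta'|_{\CP^{n}}$.

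Next, I extend this agreement over the $(2n+2)$-cell. Comparing lifts of the fixed map $h$ along $BU(n)\to BU$, the difference obstruction for the $(2n+2)$-cell lies in $H^{2n+2}(\CP^{n+1};\pi_{2n+2}F)$ plus indeterminacy. I expect the cleanest route here is the Moore--Postnikov tower of $BU(n)\to BU$. Its first stage is $K(\Z,2n+2)$ with $k$-invariant $c_{n+1}$, and lifts through that stage over $\CP^{n+1}$ are parametrized by $H^{2n+1}(\CP^{n+1};\Z)=0$. The next layer involves $\pi_{2n+2}F=0$ and contributes nothing. Hence any two lifts of $h$ agree on $\CP^{n+1}$. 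Said differently, the result of \cite{opie24enum}/\cite{Hu} in corank $1$ for $n$ odd should give $\#\Vect^h_n(\CP^{n+1})=1$ when nonempty. To stay within the stated prerequisites, though, I prefer the direct argument above.

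Then I can homotope $\eta'$ so that it agrees with $\eta$ on $\CP^{n+1}=\CP^{n+2}\setminus e^{2n+4}$. The maps then differ only on the top cell, and the standard difference construction gives a class $d\in\pi_{2n+4}BU(n)$ with $\eta'\simeq d\,\eta$. This uses the $p_D$ pinch of \Cref{ACTION}: glue $\eta$ on the disk $D'$ to $\eta'$ on the disk via the boundary, giving a map $S^{2n+4}\to BU(n)$.

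The main obstacle is the step on the $(2n+2)$-cell: showing that two rank $n$ bundles with the same $K$-theory class agree on $\CP^{n+1}$. Done naively, the difference classes live in $H^{2n+1}(\CP^{n+1};\pi_{2n+1}F)=0$ and $H^{2n+2}(\CP^{n+1};\pi_{2n+2}F)=0$, but one must handle this carefully relative to a fixed homotopy of stabilizations. I would do that in the relative mapping-space fiber sequence of \Cref{lem:lift-0}. Working with lifts over $\CP^{n+1}$ whose stabilization is identified with $h$, the set of such lifts is a quotient of $[\CP^{n+1},F]$ (relative to a base lift). Since $\CP^{n+1}$ is $(2n+2)$-dimensional and $\tau_{\le 2n+2}F\simeq K(\Z,2n+1)$, we get $[\CP^{n+1},F]=H^{2n+1}(\CP^{n+1};\Z)=0$, which closes the argument.
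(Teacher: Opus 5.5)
The gap is your assertion that $\pi_{2n+2}F=0$ for $F\simeq U/U(n)$. Since $\pi_{2n+2}U=0$ and $\pi_{2n+1}U(n)$ is finite, $\pi_{2n+2}(U/U(n))\cong\pi_{2n+1}U(n)$. This group is $0$ for $n$ odd but $\Z/2$ for $n$ even (e.g.\ $\pi_5U(2)\cong\pi_5S^3\cong\Z/2$). Your aside about corank $1$ suggests you had $n$ odd in mind. However, the lemma has no parity hypothesis, and the paper uses it without one (e.g.\ in \Cref{Cor1}).

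For $n$ even, your intermediate claim that any two lifts of $h$ agree on $\CP^{n+1}$ is actually false. When $c_1(h)$ is even, $h|_{\CP^{n+1}}$ has two non-isomorphic rank $n$ representatives \cite[Theorem 1.3(ii)]{opie24enum}, as recalled in the proof of \Cref{lem:exists-n-odd-c1-even}. The lemma survives only because exactly one of them extends over $\CP^{n+2}$ \cite[Theorem 1.3(iii)]{opie24enum}. Seeing this requires analyzing the obstruction $S^{2n+3}\to\CP^{n+1}\to BU(n)$ on the top cell, which an argument confined to $\CP^{n+1}$ cannot do. This is exactly the input the paper imports from \cite[Theorem 4.3]{opie24enum}: any two elements of $\Vect^h_n(\CP^{n+2})$ restrict to the same bundle on $\CP^{n+1}$. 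The rest of the paper's proof is your final step, namely transitivity of the $\pi_{2n+4}BU(n)$-action on extensions over the top cell.

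For $n$ odd, which is all the main theorem uses, your argument is correct. It gives a self-contained obstruction-theoretic substitute for that citation. There $\tau_{\le 2n+2}F\simeq K(\Z,2n+1)$, so after $(2n+2)$-truncation the map $BU(n)\to BU$ is principal. Lifts of $h|_{\CP^{n+1}}$ then form a torsor for a quotient of $H^{2n+1}(\CP^{n+1};\Z)=0$. State it this way rather than as ``a quotient of $[\CP^{n+1},F]$'', since $BU(n)\to BU$ itself is not a principal fibration.

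Your ``if'' direction is fine: $c_{n+2}$ detects $\pi_{2n+4}BU\cong\Z$ and vanishes on rank $n$ bundles. To prove the lemma as stated, you must either restrict to $n$ odd or supply the top-cell argument for $n$ even.
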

\begin{proof} Note that the action of $\pi_{2n+4}BU(n)$ on a given map $\eta\: \CP^{n+2} \to BU(n)$ can be identified with the transitive action of $\pi_{2n+4}BU(n)$ on extensions of \[ \eta|_{\CP^{n+1}}\: \CP^{n+1} \to BU(n)\] over the cellular inclusion 
$\CP^{n+1} \to \CP^{n+2}$. 
By \cite[Theorem 4.3]{opie24enum}, any two elements in $\Vect^{h}_n(\CP^{n+2})$ have the same restriction to $\CP^{n+1}$, and differ by an element in $\pi_{2n+4}BU(n)$. \end{proof}

Next, we explore how \Cref{lem1} helps us to analyze stably trivial rank $n$ vector bundles on $\CP^{n+2}$ and relate them to non-stably-trivial bundles. 

\begin{cor}\label{Cor1} Every stably trivial rank $n$ vector bundle on $\CP^{n+2}$ factors through the map 
\[q\:\CP^{n+2} \to  \CP^{n+2}/\CP^{n+1} \simeq S^{2n+4}.\] \end{cor}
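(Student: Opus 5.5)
We apply \Cref{lem1} with $h=0$ and take $\eta$ to be the trivial rank $n$ bundle $\epsilon^n$ on $\CP^{n+2}$. This is the constant map $\CP^{n+2}\to BU(n)$, and it represents the zero $K$-theory class. By \Cref{lem1}, every $\eta'\in\Vect^0_n(\CP^{n+2})$ is isomorphic to $\sigma\epsilon^n$ for some $\sigma\in\pi_{2n+4}BU(n)$. So it suffices to show that, for each $\sigma$, the map $\sigma\epsilon^n$ factors through $q$.

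To see this, we unwind \Cref{ACTION} with $X=\CP^{n+2}$, $k=2n+4$, and $D$ the unique top cell. By definition, $\sigma\epsilon^n$ is the composite
\[\CP^{n+2}\xrightarrow{p_D}\CP^{n+2}\vee S^{2n+4}\xrightarrow{\ast\vee\sigma} BU(n).\]
The first factor of $\ast\vee\sigma$ is constant, so this composite equals $\sigma\circ r\circ p_D$, where $r\colon \CP^{n+2}\vee S^{2n+4}\to S^{2n+4}$ collapses the first wedge summand.

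It remains to identify $r\circ p_D$ with $q$ up to homotopy. By construction, $r\circ p_D$ sends the complement of the interior of $D'$ to the basepoint. In particular it factors through the collapse $\CP^{n+2}\to \CP^{n+2}/(\CP^{n+2}\setminus \mathrm{int}\,D')$. The subspace $\CP^{n+2}\setminus \mathrm{int}\,D'$ deformation retracts onto $\CP^{n+1}$, by radially pushing the annulus $D\setminus \mathrm{int}\,D'$ onto $\partial D$. Hence this collapse is homotopic to $q$, and the induced map $S^{2n+4}\to S^{2n+4}$ is a homeomorphism, which we can take to be the identity. Therefore $\sigma\epsilon^n\simeq \sigma\circ q$.

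The main point needing care is this identification of $r\circ p_D$ with $q$, which is essentially point-set bookkeeping. Beyond that, the argument only needs two facts: that $\epsilon^n$ lies in $\Vect^0_n$, which is immediate, and that basepoints cause no trouble, which holds because $BU(n)$ is simply connected.
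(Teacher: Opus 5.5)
Your proof is correct and is essentially the paper's argument. Both apply \Cref{lem1} to the trivial bundle. The paper concludes that stably trivial bundles restrict trivially to $\CP^{n+1}$ and hence factor through $q$ by the cofiber sequence, while you unwind the pinch map to get $\sigma\epsilon^n\simeq\sigma\circ q$ directly. One harmless imprecision: the natural map $\CP^{n+2}/\CP^{n+1}\to\CP^{n+2}/(\CP^{n+2}\setminus\mathrm{int}\,D')$ is a homotopy equivalence rather than a homeomorphism.
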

\begin{proof} By \Cref{lem1}, stably trivial rank $n$ bundles on $\CP^{n+2}$ restrict to zero on $\CP^{n+1}$.
\end{proof}

Now consider $V\: \CP^{n+1} \to BU(n)$, a rank $n$ representative for $h|_{\CP^{n+1}}$, which exists by \cite[Theorem 1.3]{opie24enum}. We study whether or not $V$ extends over $\CP^{n+2}$, which would give a rank $n$ representative for $h$. 

\begin{lemma}\label{lem:exists-n-odd-c1-even} Let $h\: \CP^{n+2} \to BU$ be given. If $n$ is odd, or if $n$ is even and $c_1(h)\equiv 0 \pmod{2}$, then $h$ admits a rank $n$ representative if and only if $c_{n+1}(h)=c_{n+2}(h)=0$.
\end{lemma}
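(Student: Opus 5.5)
The forward implication is immediate. If $h$ is represented by a rank $n$ bundle, then $c_i(h)=0$ for all $i>n$, and in particular $c_{n+1}(h)=c_{n+2}(h)=0$. The work is in the converse, which I would set up as a lifting problem. Factor the stabilization $BU(n)\to BU$ through $BU(n+2)$. Over the $(2n+4)$-dimensional complex $\CP^{n+2}$, the map $BU(n+2)\to BU$ is $(2n+5)$-connected, so $h$ lifts uniquely to some $E\colon \CP^{n+2}\to BU(n+2)$. It then suffices to lift $E$ along $BU(n)\to BU(n+2)$, whose fiber is the Stiefel manifold $W=U(n+2)/U(n)$.

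The first step is to describe $W$ in the relevant range. In dimensions $\leq 2n+4$, $W$ agrees with the stunted projective space $\CP^{n+1}_{n+1}=S^{2n+1}\cup e^{2n+3}$, and the cell is attached by $(n+1)\eta$. So:
\begin{itemize}
\item $\pi_{2n+1}W\cong\Z$;
\item $\pi_{2n+2}W$ is $\Z/2$ if $n+1$ is odd and $0$ if $n+1$ is even;
\item $\pi_{2n+3}W\cong \Z\oplus\Z/2$ when $n$ is odd, because the attaching map is then null and $W\simeq S^{2n+1}\vee S^{2n+3}$ in this range. When $n$ is even, $\pi_{2n+3}W$ is an extension involving $\eta^2$, and I would read it off the long exact sequence of the cell attachment.
\end{itemize}

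The second step is Moore--Postnikov obstruction theory through the lifting tower for $W$.
\begin{itemize}
\item The primary obstruction lives in $H^{2n+2}(\CP^{n+2};\Z)$ and equals $c_{n+1}(E)=c_{n+1}(h)=0$.
\item The next obstruction lies in $H^{2n+3}(\CP^{n+2};\pi_{2n+2}W)=0$, since $\CP^{n+2}$ has only even cells.
\item The final obstruction lies in $H^{2n+4}(\CP^{n+2};\pi_{2n+3}W)$.
\end{itemize}
For the $\Z$-summand of $\pi_{2n+3}W$, the obstruction is detected rationally by the top Chern class, so it is $c_{n+2}(h)=0$. Equivalently, the rationalized fiber sequence shows that this component of the $k$-invariant pulls back to $c_{n+2}$ from $BU(n+2)$.

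What remains, and what I expect to be the main obstacle, is the torsion component. For $n$ odd this is a $\Z/2$-valued, i.e.\ mod $2$, secondary obstruction in $H^{2n+4}(\CP^{n+2};\Z/2)$, attached to the $\eta$ on the bottom cell $S^{2n+1}$.
\begin{itemize}
\item My first attempt would be to identify the corresponding $k$-invariant with a class built from $\Sq^2$ acting on the $(2n+2)$-dimensional fundamental class, corrected by Chern classes of the universal bundle. I would then use \Cref{app:Chernformulas}, together with the fact that on $\CP^{n+2}$ the operation $\Sq^2$ on $x^{n+1}$ is multiplication by $(n+1)$, to express the obstruction as a polynomial in $c_1(h)$ and the $c_{n}$-type classes modulo $2$.
\item Alternatively, I would use the indeterminacy. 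By \Cref{lem:lift-22} and \Cref{lem:lift-2}, changing the lift over the $(2n+2)$-skeleton by elements of $H^{2n+1}$ is impossible, since that group is zero. However, the lift over the $(2n+2)$-stage can be modified by $H^{2n+2}(\CP^{n+2};\Z)\cong\Z$, and this changes the secondary obstruction by $\Sq^2$ of the chosen generator.
\end{itemize}
For $n$ odd, $\Sq^2 x^{n+1}=(n+1)x^{n+2}=0$, so the indeterminacy contributes nothing and the obstruction itself must be shown to vanish. I would prove this using the existence of the rank $n$ representative $V$ on $\CP^{n+1}$ from \cite[Theorem 1.3]{opie24enum}, combined with \Cref{lem1} and the known calculation $\pi_{2n+3}BU(n)=\pi_{2n+2}U(n)$: the obstruction to extending $V$ over the top cell has vanishing image in $\pi_{2n+3}BU$. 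For $n$ odd, the relevant torsion in $\pi_{2n+2}U(n)$ then maps injectively to the Stiefel-manifold obstruction group, which is compatible with the mod $2$ obstruction being forced to $0$ by a parity computation of the form $\binom{n+1}{2}c_{n}\cdot c_1$.

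In the case $n$ even with $c_1(h)$ even, $\Sq^2 x^{n+1}=x^{n+2}\neq 0$, so the indeterminacy absorbs the $\Z/2$ obstruction. The hypothesis $c_1(h)\equiv 0\pmod 2$ is exactly what I expect to be needed to make the $\eta^2$-part of $\pi_{2n+3}W$ behave the same way, via the formula $\Sq^2 c_k=c_1c_k+(k+1)c_{k+1}$ modulo $2$.
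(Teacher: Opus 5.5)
Your framework (lift $h$ uniquely to $BU(n+2)$, then run obstruction theory along $BU(n)\to BU(n+2)$ with fiber $W=U(n+2)/U(n)$) is legitimate, and it differs from the paper's route. The paper takes a rank $n$ representative $V$ of $h|_{\CP^{n+1}}$ and factors the extension obstruction $V\circ\eta$ through the fiber $S^{2n+1}\to BU(n)$ of $BU(n)\to BU(n+1)$. For $n$ odd it kills this obstruction using injectivity of $\pi_{2n+3}BU(n)\to\pi_{2n+3}BU(n+1)$; for $n$ even it cites \cite[Theorem 1.3(iii)]{opie24enum}.

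Your description of $W$, however, is wrong exactly where you place the main difficulty. The bottom class $z_{2n+1}\in H^{2n+1}(W;\Z/2)$ transgresses to $c_{n+1}$, and $\Sq^2c_{n+1}=c_1c_{n+1}+(n+2)c_{n+2}$ by \Cref{prop:Chernform2}. Since $c_1c_{n+1}$ lies in the indeterminacy of the transgression, $\Sq^2z_{2n+1}=n\,z_{2n+3}$, so the $(2n+3)$-cell is attached by $n\eta$, not $(n+1)\eta$. For $n$ odd the $(2n+4)$-skeleton of $W$ is $S^{2n+1}\cup_\eta e^{2n+3}$. Hence $\pi_{2n+2}W=0$, which is the value you state but contradicts your own wedge decomposition. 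Also $\pi_{2n+3}W\cong\Z$, because $\eta^2$ is killed by the cell; for $n=1$, $W=SU(3)$ and $\pi_5SU(3)\cong\Z$.

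So the $\Z/2$ obstruction you call the main obstacle does not exist. Your sketch for disposing of it would not have worked anyway:
\begin{itemize}
\item the fact that $\pi_{2n+3}BU=0$ carries no information;
\item there is no injection of $\pi_{2n+2}U(n)$ into $\pi_{2n+3}W$, since the long exact sequence gives a surjection $\pi_{2n+3}W\to\pi_{2n+3}BU(n)$ (because $\pi_{2n+3}BU(n+2)=0$);
\item the expression $\binom{n+1}{2}c_nc_1$ is not derived from anything.
\end{itemize}

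With $W$ corrected, your approach settles the odd case at once and without \cite{opie24enum}. The lift to $\operatorname{fib}(c_{n+1})$ is unique because $H^{2n+1}(\CP^{n+2};\Z)=0$, and there is no $\pi_{2n+2}$-stage. The final obstruction lies in the torsion-free group $H^{2n+4}(\CP^{n+2};\Z)$, so it is detected rationally. The rational $k$-invariant restricts to zero on $BU(n)$ and is nonzero, so it is a nonzero multiple of $c_{n+2}$, and the obstruction vanishes exactly when $c_{n+2}(h)=0$. This route also lands in the correct $K$-theory class automatically, a point left implicit in the paper's extension argument.

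For $n$ even your sketch is also incomplete. The indeterminacy at the second stage is $H^{2n+2}(\CP^{n+2};\pi_{2n+2}W)=H^{2n+2}(\CP^{n+2};\Z/2)$, not $H^{2n+2}(\CP^{n+2};\Z)$. Because the tower is twisted over $BU(n+2)$, changing the lift by $\delta$ changes the $\Z/2$-component of the final obstruction by $\Sq^2\delta+c_1\delta$, not by $\Sq^2\delta$ alone; compare the summand $\iota_{2n+1}\otimes c_1$ in the class $u$ of \Cref{sec:n1mod4}. If only $\Sq^2\delta$ appeared, then since $\Sq^2x^{n+1}=x^{n+2}$ for $n$ even, your argument would give existence for every value of $c_1$, and the parity hypothesis would never be used. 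The twisted term, which you only anticipate, is exactly what must be computed (via the Serre spectral sequence of that stage) to obtain the even case. Otherwise you need to cite \cite[Theorem 1.3(iii)]{opie24enum}, as the paper does.
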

\begin{proof}

The obstruction to extending a rank $n$ vector bundle $V$ on $\CP^{n+1}$ over $\CP^{n+2}$ is the composite
\[ S^{2n+3} \xrightarrow{\eta} \CP^{n+1} \xrightarrow{V} BU(n),\]
where $\eta$ is the Hopf map. Consider the diagram:
\[\begin{tikzcd}[row sep=1.2em]
& &S^{2n+1} \ar[d,"x"] \\
S^{2n+3} \ar[urr,dashed,crossing over,bend left=15,"b"]\ar[r,"\eta"]& \CP^{n+1} \ar[r,"V"] &BU(n)\ar[d,"y"]\\
  & & BU(n+1) \end{tikzcd}
\]
where the right-most vertical triple is a fiber sequence. The dashed arrow labeled $b$ exists, satisfying $x \circ b \cong V \circ \eta$, since every rank $n+1$ bundle on $\CP^{n+1}$ extends over $\CP^{n+2}$ \cite[Theorem 1.3(ii)]{opie24enum}.

In the case that $n$ is odd, the map $S^{2n+1} \to BU(n)$ induces the zero map on $\pi_{2n+3}$ since the natural morphism $\pi_{2n+3}BU(n) \to \pi_{2n+3}BU(n+1)$ is injective (use \cite[pp. 970 - 971]{Mimura_HBAT} to consider the long exact sequence on homotopy associated to the fibration).

In the case that $n$ is even and $c_1(h)$ is even, there are two non-isomorphic representatives for $h|_{\CP^{n+1}}$ by \cite[Theorem 1.3(ii)]{opie24enum}. By  \cite[Theorem 1.3(iii)]{opie24enum}, exactly one of these extends over $\CP^{n+2}$.
\end{proof}
\begin{rmk} By \cite[p. 152]{AR}, if $n=2$, \Cref{lem:exists-n-odd-c1-even} is also true for $c_1(h)$ odd: every rank $2$ bundle on $\CP^{3}$ with odd first Chern class extends over $\CP^4$. The proof makes use of some explicit computations in twisted symplectic $K$-theory, and it is not clear how to adapt this argument to even $n=4$.\end{rmk}

\section{Computations at the prime $3$}\label{MoorePost3}

In this section, we focus on the prime $p=3$. Some cases follow easily from the theory developed in the previous section.
\begin{prop} \label{prop:prime3easycase} Let $h\: \CP^{n+2} \to BU \to \c{BU}{3}$ be given, and suppose that $h$ admits a $3$-complete rank $n$ representative.
When $n\equiv 1, 2 \mod 3$, 
$\c{\Vect^h_n(\CP^{n+2})}{3}$ is a singleton. 
\end{prop}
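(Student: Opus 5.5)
We prove the claim by combining \Cref{lem1} with \Cref{thm:yang-main}. The idea is that the $3$-primary part of $\Vect^h_n(\CP^{n+2})$ is acted on transitively by the $3$-completion of $\pi_{2n+4}BU(n)$, and that this group has trivial $3$-primary part exactly when $n\equiv 1,2 \pmod 3$.

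\textbf{Step 1 (transitive action).} We run the argument of \Cref{lem1} $3$-completely. Fix a $3$-complete rank $n$ representative $\eta\:\CP^{n+2}\to \c{BU(n)}{3}$ of $\c{h}{3}$. The input from \cite[Theorem 4.3]{opie24enum} says that any two representatives agree on $\CP^{n+1}$. Via \Cref{cor:pcomplete}, this holds $3$-locally: the restriction to $\CP^{n+1}$ is determined by $h$, since corank $1$ representatives with given stable class are unique when they exist, and \cite[Theorem 1.3]{opie24enum} gives uniqueness at least at odd primes. So any two $3$-complete representatives restrict to the same map on $\CP^{n+1}$. They therefore differ by the action of $\pi_{2n+4}\c{BU(n)}{3}\cong \pi_{2n+4}BU(n)\otimes \Z_3$ on extensions over the top cell, as in \Cref{ACTION}. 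Hence $\#\c{\Vect^h_n(\CP^{n+2})}{3}$ is at most $\#(\pi_{2n+4}BU(n))_{(3)}$. It is in fact a quotient of this group by a stabilizer.

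\textbf{Step 2 (compare with $h=0$).} The same action, applied to the trivial bundle, has orbit $\c{\Vect^0_n(\CP^{n+2})}{3}$. The stabilizer of the trivial bundle is the image of the precomposition map $q^*$ from $[\Sigma\CP^{n+1},\c{BU(n)}{3}]$ under the cofiber sequence $\CP^{n+1}\to\CP^{n+2}\to S^{2n+4}$ (\Cref{Cor1}).

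This gives a cleaner route than bounding directly. We show $\pi_{2n+4}BU(n)_{(3)}=0$ for $n\equiv1,2\pmod 3$, using the fibration $S^{2n+1}\to BU(n)\to BU(n+1)$ and $\pi_{2n+4}BU(n+1)$. For $m=n+1$ and $k=2n+4=2m+2$, one has $\pi_{2m+2}BU(m)\cong \pi_{2m+1}U(m)$, and this is $\Z/(m!)$-type data twisted. Rather than recall these groups, we argue via \Cref{thm:yang-main}. The $3$-part of $\#\Vect^0_n(\CP^{n+2})$ is trivial exactly for $n\equiv 1,2\pmod 3$: reading off the table, the entries $2,1,2,4,1,4,1,2$ at $n\equiv1,2,4,5,7,8,10,11\pmod{12}$ are prime to $3$. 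Since $\c{\Vect^0_n}{3}$ is the quotient of $\pi_{2n+4}BU(n)_{(3)}$ by the image of $q^*$, we still need the image of $q^*$ to lie in the stabilizer for every $h$.

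\textbf{Step 3 (main obstacle).} The crux is showing that the stabilizer for $\eta$ contains the stabilizer for the trivial bundle. We plan to do this by realizing both as the image of a boundary map $\pi_1(\mathrm{Map}(\CP^{n+1},\c{BU(n)}{3}),V)\to \pi_{2n+4}$ and comparing. Over $\CP^{n+1}$, $3$-completed $BU(n)\to BU$ is highly connected relative to the dimension. So $3$-locally $V$ differs from the trivial bundle only through $BU$-data. Using the infinite loop structure and the action of $\Omega$ of the fiber, as in \Cref{lem:lift-22}, the image for $V$ contains a translate of the image for $0$.

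Given that containment, $\#\c{\Vect^h_n}{3}$ divides $\#\c{\Vect^0_n}{3}=1$, which proves the claim.
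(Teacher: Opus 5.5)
Your Step~1 is the paper's argument: the $3$-complete version of \Cref{lem1} gives a transitive action of the $3$-primary part of $\pi_{2n+4}BU(n)$. After that the proposal has a genuine gap.

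The paper closes the proof in one line by citing \cite[p.~970]{Mimura_HBAT}: for $n\equiv 1,2\pmod 3$ the $3$-primary part of $\pi_{2n+4}BU(n)$ vanishes. A trivial group acting transitively on a nonempty set leaves exactly one element. You announce this vanishing but never prove or cite it, and switch to \Cref{thm:yang-main} instead. Hu's count only says that the \emph{quotient} of $\pi_{2n+4}BU(n)_{(3)}$ by the stabilizer of the trivial bundle is trivial. That stabilizer is the image of $[\Sigma\CP^{n+1},BU(n)]$ under precomposition with the suspended attaching map. The count says nothing about the group itself, nor about the stabilizer of a nontrivial $\eta$.

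That is why you need Step~3, and Step~3 is not established. Its key assertion, that over $\CP^{n+1}$ the map $BU(n)\to BU$ is ``highly connected relative to the dimension,'' is false. The map is only $(2n+1)$-connected, and the stabilizer computation lives on $\Sigma\CP^{n+1}$, of dimension $2n+3$. In that range the fiber $U/U(n)$, with $\pi_{2n+1}\cong\Z$, genuinely contributes. Indeed $[\Sigma\CP^{n+1},BU]=0$, so all of $[\Sigma\CP^{n+1},BU(n)]$ comes from the fiber. Moreover, the containment of stabilizers you want is a strengthening of $\#\Vect^h_n\mid\#\Vect^0_n$. The paper only observes that divisibility a posteriori from its final formulas, and poses it in general as an open question (\Cref{conj:bound}). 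It cannot be obtained by a vague appeal to \Cref{lem:lift-22}. As written, the proposal does not prove the proposition.

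The repair is to drop Steps~2--3 and supply the missing input directly: show or cite that $\pi_{2n+4}BU(n)\cong\pi_{2n+3}U(n)$ has no $3$-torsion for $n\equiv 1,2\pmod 3$. You could derive it from $S^{2n+1}\to BU(n)\to BU(n+1)$, as you suggest. Then you must show that the boundary map $\pi_{2n+5}BU(n+1)\to\pi_{2n+4}S^{2n+1}$ (stably $\Z/24$) hits the $3$-torsion, which requires real knowledge of $\pi_{2n+4}U(n+1)$. You also need $\pi_{2n+4}BU(n+1)\cong\pi_{2m+1}U(m)$, with $m=n+1$, to have no $3$-torsion. It does: this group is $\Z/2$ or $0$, not ``$m!$-type.''

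Two smaller slips. The table in \Cref{thm:yang-main} is $24$-periodic rather than $12$-periodic, and its entry at $n\equiv 11$ is $1$, not $2$; this is harmless for the prime-to-$3$ conclusion. Corank-$1$ representatives are also not unique in general: for $n$ even and $c_1$ even there are two. What you actually need, that all elements of $\Vect^h_n(\CP^{n+2})$ share a restriction to $\CP^{n+1}$, is exactly \cite[Theorem 4.3]{opie24enum}.
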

\begin{proof} If $n\equiv 1,2 \pmod{3}$, then $\c{(\pi_{2n+4}BU(n))}{3}\cong 0$ by \cite[page 970]{Mimura_HBAT}. By the $3$-complete version of \Cref{lem1}, we find that there is a transitive action of $\c{(\pi_{2n+4}BU(n))}{3}\cong 0$ on $\c{\Vect^h_n(\CP^{n+2})}{3}.$ So, $\c{\Vect^h_n(\CP^{n+2})}{3}$ is either empty or a singleton. Under the hypothesis that $h$ admits a $3$-complete rank $n$ representative, the set must be a singleton.
\end{proof} 
\begin{rmk} In fact, it is straightforward to show directly from an obstruction-theoretic argument that $h$ admits a $3$-complete rank $n$ representative if and only if the top two $3$-completed Chern classes vanish. However, in the main case of interest, $n$ is odd and we already have a proof of this integrally (cf. \Cref{lem:exists-n-odd-c1-even}).
\end{rmk} 
We now focus on the case $n\equiv 0 \pmod{3}$. Let $F$ be the homotopy fiber of the map
\[
c_{n+1}\times c_{n+2}\: BU \to K(\Z,2n+2) \times K(\Z, 2n+4)
\] 
where $c_{n+1}$ and $c_{n+2}$ denote, respectively, the operations of taking $(n+1)$-st and $(n+2)$-st Chern classes.

The space $F$ accounts for the primary Chern class obstructions to finding a rank $n$ representative for $h$. When $n\equiv0\pmod3$, the cohomology of $F$ contains one further obstruction class $U$. Passing to $G=\operatorname{fib}(U)$ gives the required approximation to $\c{BU(n)}{3}$, and the Moore--Postnikov lifting calculus described in \Cref{lifting} converts the enumeration into a computation of $\operatorname{coker}(\pi_1U_*).$

We choose a lift $f\: BU(n) \to F$ of the natural map $BU(n) \to BU$:
\begin{equation}\label[diagram]{diag:f}
\begin{tikzcd}
& F\ar[d] \\
BU(n) \ar[r]\ar[ur, dashed,"\exists f"]& BU \ar[rr,"c_{n+1} \times c_{n+2}"] & & K(\Z,2n+2) \times K(\Z, 2n+4).
\end{tikzcd} 
\end{equation}
We compute the Serre spectral sequence associated with the fiber sequence
\begin{equation} \label[diagram]{fibF}
K(\Z,2n+1)\times K(\Z, 2n+3) \to F \to BU,
\end{equation}
which takes the form
$$E_2^{t,q}=\HZ{3}^t\left(BU, \HZ{3}^q(K(\Z,2n+1)\times K(\Z, 2n+3))\right)\implies \HZ{3}^{t+q}F.$$
The $E_2$-page is depicted in \Cref{fig2}. The differentials on the $E_r$-page are $d_r^{t,q}\:E_r^{t,q} \to E_r^{t+r,q-r+1}.$ 
\begin{prop}[\cite{MT}]
Let $\iota_j$ be the standard generator for $\HZ{3}^{j}K(\Z,j)$. Then
\[\HZ{3}^{*\leq 2n+5}K(\Z,2n+1)\cong \Z/3\{1,\iota_{2n+1}, P^1\iota_{2n+1}\}. \]
\end{prop}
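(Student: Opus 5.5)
We will deduce the statement from Cartan's computation of the mod $p$ cohomology of Eilenberg--Mac Lane spaces, as recorded in \cite{MT}, by checking degrees. For $p=3$ the ring $\HZ{3}^*K(\Z,m)$, with $m\geq 2$, is a free graded-commutative algebra over $\Z/3$. Its generators are the classes $\theta(\iota_m)$, where $\theta$ runs over the admissible monomials $\beta^{\epsilon_0}P^{s_1}\beta^{\epsilon_1}\cdots P^{s_k}\beta^{\epsilon_k}$ that satisfy the excess condition $e(\theta)<m$ and have $\epsilon_k=0$. The last condition holds because the integral class $\iota_m$ is killed by $\beta$. Here $P^s$ has degree $4s$ and $\beta$ has degree $1$.

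We take $m=2n+1$ and list the generators of degree at most $2n+5$. A generator $\theta(\iota_{2n+1})$ has degree $2n+1+\deg\theta$, so we need $\deg\theta\leq 4$.
\begin{itemize}
\item If $\theta$ contains some $P^s$ with $s\geq 1$, the degree is already at least $4$. So the only candidate is $P^1$ itself; any extra $\beta$ would push the degree to $5$.
\item Monomials built from $\beta$ alone are excluded: by admissibility and $\epsilon_k=0$, the only one is the identity.
\item The monomial $P^1$ is admissible, has $\epsilon_k=0$, and has excess $2<2n+1$. Alternatively, the unstable condition $P^s x=0$ for $2s>\deg x$ does not apply, since $2<2n+1$.
\end{itemize}
The algebra generators in this range are therefore $\iota_{2n+1}$, in degree $2n+1$, and $P^1\iota_{2n+1}$, in degree $2n+5$.

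Next we account for products. Since $n\geq 1$, any product of two positive-degree generators has degree at least $2(2n+1)>2n+5$. Moreover, $\iota_{2n+1}$ has odd degree, so in the free graded-commutative algebra its square vanishes at odd primes. Hence in degrees $\leq 2n+5$ the cohomology is spanned by $1,\iota_{2n+1},P^1\iota_{2n+1}$, and freeness makes $P^1\iota_{2n+1}$ nonzero.

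The only subtle point is making sure that no Bockstein-type classes, such as $\beta P^1\iota$ in degree $2n+6$, fall into the range; the degree count above rules this out. If one prefers to avoid quoting Cartan in full, an alternative route is the Serre spectral sequence of the path fibration $K(\Z,2n)\to *\to K(\Z,2n+1)$, combined with transgression, in the stable range below $2(2n+1)$. In that range the cohomology of $K(\Z,2n+1)$ agrees with $H^{*-(2n+1)}$ of the Eilenberg--Mac Lane spectrum $H\Z$ mod $3$. That group is $\mathcal A/\mathcal A\beta$, whose degree-$\leq 4$ part is spanned by $1$ and $P^1$, giving the same answer.
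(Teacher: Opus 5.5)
Your proposal is correct and follows the same route as the paper, which gives no argument beyond citing Mosher--Tangora for Cartan's computation. Your degree check supplies exactly what that citation contains: among admissible monomials not ending in $\beta$, only $\theta=1$ and $\theta=P^1$ have $\deg\theta\le 4$, and $P^1\iota_{2n+1}\neq 0$ by freeness.

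One small slip: the inequality $2(2n+1)>2n+5$ needs $n\geq 2$ rather than $n\geq 1$, and the same threshold governs your stable-range alternative. For $n=1$, however, the only product in range is $\iota_3^2=0$, which your graded-commutativity remark already handles, and the paper only uses $n\geq 3$ anyway.
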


\begin{figure}[h]
\centering
\textbf{The $E_2$-page for the Serre spectral sequence computing $\HZ{3}^*F$}\par\medskip
\begin{tikzpicture}
\matrix (m) [matrix of math nodes,
nodes in empty cells,nodes={minimum width=2ex,
minimum height=2ex,outer sep=-1pt},
column sep=.2ex,row sep=.2ex]{
t&\\
2n+5 && P^1\iota_{2n+1} & & & & & & & & & & && \\
2n+4 && & & & & & & & & & & && \\
2n+3 && \iota_{2n+3} & & & & & & & & & & && \\
2n+2 && & & & & & & & & & & && \\
2n+1 && \iota_{2n+1} & &  & & & & & & & & && \\
\vdots\\
0&&1 & & & & \hdots &&  c_{n+1} &  & c_{n+2} & & c_{n+3}\\
&& & & & & & & & & & & && \\
\quad\strut & & 0  & &  && \hdots& & 2n+2 & &2n+4& & 2n+6 & & q \strut \\};
\draw[->] (m-4-3.south east) -- (m-8-11.north west);
\draw[->] (m-6-3.south east) -- (m-8-9.north west);
\draw[thick] (m-1-2) -- (m-10-2) ;
\draw[thick] (m-9-1) -- (m-9-14) ;
\end{tikzpicture}
\caption{A schematic of the $E_2$-page for the $\Z/3$-cohomology Serre spectral sequence for the fiber sequence $K(\Z,2n+1)\times K(\Z, 2n+3) \to F \to BU$. Only multiplicative generators are shown. Arrows indicate differentials on key classes.}\label{fig2}
\end{figure}

Note that since $F$ is defined via the fiber sequence \Cref{fibF}, \[
d_{2n+2}(\iota_{2n+1}) = c_{n+1}, \text{ and } d_{2n+4}(\iota_{2n+3}) = c_{n+2}.\]
By transgression and \Cref{prop:Chernform3},
\[
d_{2n+6}(P^1\iota_{2n+1}) = P^1d_{2n+2}(\iota_{2n+1}) = P^1c_{n+1}=
c_1^2c_{n+1} + c_2c_{n+1} +2c_1c_{n+2} + (n+3)c_{n+3}.
\]
Since $c_{n+1}$ and $c_{n+2}$ are both killed in previous pages, and $n\equiv 0 \pmod{3}$, $
d_{2n+6}(P^1\iota_{2n+1}) =
0 .$ 

Let $U \in \HZ{3}^{2n+5}F$ be the class detected by $P^1\iota_{2n+1}$. 
Note that $U$ is represented by
$P^1\iota_{2n+1}\otimes 1 - \iota_{2n+1}\otimes c_1^2 - \iota_{2n+1}\otimes c_2 + \iota_{2n+3}\otimes c_1$
on the $E_2$-page of the Serre spectral sequence for 
\[
K(\Z, 2n+1)\times K(\Z, 2n+3) \to F \to BU,
\]
where $\iota_j$ generates $\HZ{3}^jK(\Z,j)$.

Let $G$ denote the homotopy fiber of $U$. Given $BU(n) \to F$ inducing an isomorphism on homotopy through degree $2n+4$, let $g\: BU(n) \to G$ be a lift fitting into \Cref{diag:g}.

\begin{equation}\label[diagram]{diag:g}
\begin{tikzcd}
& G\ar[d] & &  \\
& F\ar[d] \ar[r,"U"] & K(\Z/3,2n+5) & & \\
BU(n) \ar[r] \ar[ur, bend left = 5, "f", near end] \ar[uur, bend left = 20, dashed," g"] & BU \ar[rr,"c_{n+1}\times c_{n+2}"] & & K(\Z,2n+2)\times K(\Z, 2n+4).
\end{tikzcd} 
\end{equation}

By \Cref{lem:basic}, we deduce:

\begin{prop}\label{prop:prime3hardcase} 
Let $n\equiv 0 \mod 3$. There is an equivalence 
\[
\tau_{\leq 2n+4}g\: \tau_{\leq 2n+4} \c{BU(n)}{3} \to \tau_{\leq 2n+4}\c{G}{3},
\]
and hence a bijection
\[
[\CP^{n+2}, \c{BU(n)}{3}] \cong [\CP^{n+2}, \c{G}{3}].
\]
\end{prop}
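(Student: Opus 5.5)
The plan is to show that $g\colon BU(n)\to G$ induces an isomorphism on $\HZ{3}$-cohomology in degrees $\leq 2n+5$. We then apply \Cref{lem:basic} with $i=2n+5$ to the $3$-completions. This yields an isomorphism on $\pi_j$ for $j\leq 2n+4$, which is exactly the claimed equivalence of $(2n+4)$-truncations. Since $\CP^{n+2}$ is a CW complex of dimension $2n+4$, it is $(2n+4)$-skeletal, and the bijection $[\CP^{n+2},\c{BU(n)}{3}]\cong[\CP^{n+2},\c{G}{3}]$ follows formally.

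\textbf{Step 1: compute $\HZ{3}^{*}F$ in degrees $\leq 2n+5$.} From the Serre spectral sequence of \Cref{fibF}, as in \Cref{fig2}:
\begin{itemize}
\item $\iota_{2n+1}$ transgresses to $c_{n+1}$ and $\iota_{2n+3}$ transgresses to $c_{n+2}$. These kill the classes of $\HZ{3}^*BU$ generated by $c_{n+1},c_{n+2}$ in the relevant range.
\item The products $\iota_{2n+1}\otimes x$ support differentials hitting $c_{n+1}x$, so they do not survive. The one exception in total degree $\leq 2n+5$ is the combination detected by $P^1\iota_{2n+1}$, which is the class $U$ by the computation preceding the statement.
\item The classes $\iota_{2n+1}\otimes c_1$ (total degree $2n+3$) and $\iota_{2n+1}\otimes c_1^2,\ \iota_{2n+1}\otimes c_2,\ \iota_{2n+3}\otimes c_1$ (total degree $2n+5$) map injectively under the differential, since $c_{n+1}c_1$ etc. are nonzero in $\HZ{3}^*BU$.
\end{itemize}
Consequently $\HZ{3}^{*\leq 2n+5}F \cong \HZ{3}^*BU/(c_{n+1},c_{n+2}) \oplus \Z/3\{U\}$, with $U$ in degree $2n+5$.

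\textbf{Step 2: pass from $F$ to $G$.} Use the Serre spectral sequence for $K(\Z/3,2n+4)\to G\to F$. Here $\HZ{3}^{*\leq 2n+6}K(\Z/3,2n+4)$ is spanned by $1,\iota_{2n+4},\beta\iota_{2n+4}$, and $\iota_{2n+4}$ transgresses to $U$. This kills $U$ and leaves $\HZ{3}^{*}F/(U)$ unchanged through degree $2n+5$. The class $\beta\iota_{2n+4}$ lies in degree $2n+5$, but it transgresses to $\beta U$. I expect the main obstacle to be checking that $\beta U\neq 0$ in $\HZ{3}^{2n+6}F$, and I would try first to read this off the representative of $U$ in \Cref{fig2}. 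If instead $\beta U=0$, an extra degree-$(2n+5)$ class could survive in $\HZ{3}^*G$. In that case we only get an isomorphism through degree $2n+4$ together with an injection in degree $2n+5$. That weaker statement still suffices for $\pi_j$ with $j\leq 2n+3$ by \Cref{lem:basic}, and the remaining isomorphism on $\pi_{2n+4}$ can be obtained by comparing orders, using $\c{(\pi_{2n+4}BU(n))}{3}\cong\Z/3$ for $n\equiv0\pmod 3$ from \cite{Mimura_HBAT}.

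\textbf{Step 3: compare with $BU(n)$.} Recall $\HZ{3}^*BU(n)=\HZ{3}^*BU/(c_{n+1},c_{n+2},\dots)$. In degrees $\leq 2n+5$ this agrees with $\HZ{3}^*BU/(c_{n+1},c_{n+2})$, since $c_{n+3}$ has degree $2n+6$. The map $g^*$ is compatible with the maps to $BU$ in \Cref{diag:g}, so it identifies these groups and is therefore an isomorphism $\HZ{3}^{*\leq 2n+5}G\to\HZ{3}^{*\leq 2n+5}BU(n)$. Applying \Cref{lem:basic} now finishes the proof.
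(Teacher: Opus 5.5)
Your route is the paper's. Its proof is just the citation of \Cref{lem:basic}, which tacitly relies on exactly the isomorphism $g^*\colon \HZ{3}^{*\leq 2n+5}G\to\HZ{3}^{*\leq 2n+5}BU(n)$ that you set out to verify, and your Steps 1 and 3 are correct.

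However, you leave the one substantive point, $\beta U\neq 0$, as an expectation rather than proving it. It closes in a line, along the lines you suggest. ``$U$ is detected by $P^1\iota_{2n+1}$'' means that $U$ restricts to $P^1\iota_{2n+1}$ on the fiber $K(\Z,2n+1)\times K(\Z,2n+3)$ of \Cref{fibF}. Restriction commutes with $\beta$, so $\beta U$ restricts to $\beta P^1\iota_{2n+1}$. This class is nonzero in $\HZ{3}^{2n+6}K(\Z,2n+1)$ by Cartan's computation: it is admissible, of excess $3<2n+1$, and does not end in $\beta$. In the Serre spectral sequence for $K(\Z/3,2n+4)\to G\to F$, nothing can hit $E^{2n+6,0}$ before the $E_{2n+6}$-page, because the only candidate source, $\iota_{2n+4}\otimes\HZ{3}^1F$, is zero. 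Hence $d_{2n+6}(\beta\iota_{2n+4})=\beta U\neq 0$, so $\HZ{3}^{2n+5}G=0=\HZ{3}^{2n+5}BU(n)$, which completes your argument.

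You should delete the fallback, since as written it would not work. If $\beta U$ were zero, $g^*$ in degree $2n+5$ would be $\Z/3\to 0$. That map is surjective, not injective, and the cohomological Whitehead criterion needs injectivity of $g^*$ in the top degree. So you would only get isomorphisms on $\pi_{\leq 2n+3}$ and a surjection $\pi_{2n+4}\c{BU(n)}{3}\to\pi_{2n+4}\c{G}{3}$. ``Comparing orders'' would then require knowing that $\pi_{2n+4}\c{G}{3}$ has order $3$, which you have not shown. Since $\pi_{2n+4}F=0$, the fibration $K(\Z/3,2n+4)\to G\to F$ only exhibits $\pi_{2n+4}G$ as a quotient of $\Z/3$. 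Because $\beta U\neq 0$, none of this is needed.
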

\begin{thm}\label{main3count}
Let $n$ be odd and let $h\: \CP^{n+2} \to BU$ be such that $c_{n+1}(h)=c_{n+2}(h)=0$. If $n\equiv 0 \mod 3$ and $c_{1}(h) \equiv c_{2}(h) \equiv 0 \pmod 3$, then the composite
\[\c{h}{3}\:\CP^{n+2} \xrightarrow{h}BU \to \c{BU}{3}\] admits three non-homotopic lifts to $\c{BU(n)}{3}$. Otherwise, $\c{h}{3}$ admits a unique lift to $\c{BU(n)}{3}.$
\end{thm}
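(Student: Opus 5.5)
The plan is to dispose of the case $n\not\equiv 0\pmod 3$ first and then run the lifting calculus of \Cref{lifting} on the tower $G\to F\to BU$. If $n\not\equiv 0 \pmod 3$, \Cref{lem:exists-n-odd-c1-even} gives an integral rank $n$ representative, hence a $3$-complete one. \Cref{prop:prime3easycase} then says the lift is unique. So assume $n\equiv 0\pmod 3$. By \Cref{prop:prime3hardcase} we must count homotopy classes of maps $\CP^{n+2}\to \c{G}{3}$ over $\c{h}{3}$.

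Since $c_{n+1}(h)=c_{n+2}(h)=0$, the map $h$ lifts to $F$. Lifts of $h$ to $F$ form a torsor for
\[
H^{2n+1}(\CP^{n+2})\times H^{2n+3}(\CP^{n+2}) = 0 ,
\]
and there is no $\lim^1$ issue. Hence the lift $g_0\:\CP^{n+2}\to F$ is unique. It lifts further to $G$ because a lift already exists: \Cref{lem:exists-n-odd-c1-even} gives a rank $n$ representative, and composing with $g$ produces one.

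To count lifts to $G$, I apply \Cref{lem:lift-22} and \Cref{lem:lift-2} with
\[
P_1=G,\quad P_0=F,\quad C=K(\Z/3,2n+5),\quad B=BU,\quad K=K(\Z,2n+2)\times K(\Z,2n+4),\quad X=\CP^{n+2}.
\]
Here $\pi_1(C^X,0)\cong H^{2n+4}(\CP^{n+2};\Z/3)\cong\Z/3$. The hypothesis that $\pi_1(F^X)\to\pi_1(BU^X)$ is zero holds $3$-adically: $\pi_1(BU^X)=\widetilde{K}^{-1}(\CP^{n+2})=0$. The action map
\[
m\: K(\Z,2n+1)\times K(\Z,2n+3)\times F\to F
\]
acts on the fibre. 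Take $y$ to be the constant loop at $g_0$ and let $x=(a,b)$ range over
\[
H^{2n+1}(\Sigma\CP^{n+2})\times H^{2n+3}(\Sigma\CP^{n+2}) \;=\; \Z\sigma u^n\times \Z\sigma u^{n+1},
\]
where $u$ generates $H^2(\CP^{n+2})$. Following \Cref{rmk:practice}, I compute $(x,y)^*m^*U-(0,y)^*m^*U$.

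The key input is the formula for $m^*U$. Since $U$ is represented by
\[
P^1\iota_{2n+1}\otimes 1-\iota_{2n+1}\otimes(c_1^2+c_2)+\iota_{2n+3}\otimes c_1,
\]
one expects
\[
m^*U = 1\otimes U + P^1\iota_{2n+1}\otimes 1 - \iota_{2n+1}\otimes (c_1^2+c_2)+\iota_{2n+3}\otimes c_1
\]
modulo terms that vanish on $\Sigma X$. Pulled back along $(x,y)$, products of a suspension class with a class from $X$ are formed through $S^1\times X$. This gives the image element
\[
P^1(a\sigma u^n) - a(c_1^2+c_2)\sigma u^{n} + b\,c_1\sigma u^{n+1}.
\]
Now $P^1$ on $\sigma u^n$ is $\sigma(n u^{n+2})$, which is zero since $3\mid n$. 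So the image is the subgroup of $\Z/3$ generated by $c_1^2+c_2$ and $c_1$, that is, by $c_1$ and $c_2$ mod $3$. The cokernel therefore has order $3$ exactly when $c_1\equiv c_2\equiv 0\pmod 3$, and order $1$ otherwise. By \Cref{cor:coker-1} this gives the count.

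The main obstacle is justifying the formula for $m^*U$. Primitivity of $U$ holds only up to the indeterminacy coming from the Serre filtration, so I would pin it down by naturality. Consider the action restricted to the fibre inclusion, and compare with the universal case $F\to BU$, where the cross terms are forced by the differentials $d(\iota_{2n+1})=c_{n+1}$ and $d(\iota_{2n+3})=c_{n+2}$. I would also check that no further cross terms such as $\iota_{2n+1}\otimes c_1\cdot(\cdot)$ survive after restricting to $\CP^{n+2}$ in degree $2n+5$. Degree counting shows that only the classes $c_1^2$, $c_2$ and $c_1$ can pair with $\iota_{2n+1}$ and $\iota_{2n+3}$.
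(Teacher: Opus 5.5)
Your proposal is correct and follows essentially the same route as the paper: you dispose of $n\not\equiv 0\pmod 3$ via \Cref{prop:prime3easycase}, and you compute the image of $\pi_1U_*$ by pulling back the $E_2$-representative of $m^*U$ along $(x,y)$, which gives the subgroup of $\Z/3$ spanned by the classes $\bigl(b\,c_1(h)-a(c_1(h)^2+c_2(h))\bigr)\sigma u^{n+2}$, since $P^1(\sigma u^n)=n\,\sigma u^{n+2}=0$. One minor notational slip: in your displayed image element, the terms $a(c_1^2+c_2)\sigma u^{n}$ and $b\,c_1\sigma u^{n+1}$ should read $a(c_1^2+c_2)\sigma u^{n+2}$ and $b\,c_1\sigma u^{n+2}$, where $c_i$ now denotes the integer $c_i(h)$ (the classes $c_1$ and $c_2$ contribute factors $u$ and $u^2$).
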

\begin{proof} By \Cref{lem:exists-n-odd-c1-even}, $h$ admits a rank $n$
representative, and hence a $3$-complete rank $n$ representative.
Thus, \Cref{prop:prime3easycase} handles the cases $n\equiv 1,2\pmod{3}$.

For the remaining case $n\equiv 0\pmod{3}$, we use \Cref{prop:prime3hardcase}. Given a lift $f\: \CP^{n+2} \to F$ of $h$, the obstruction to lifting to 
$G$ lies in  $\HZ{3}^{2n+5}\CP^{n+2}=0.$
Fix a choice of $f$, and consider
\begin{equation}\label{eq:pi1u} 
\pi_1U_* \: \pi_1(F^{\CP^{n+2}},f) \to \pi_1(K(\Z/3,2n+5)^{{\CP^{n+2}}},0).
\end{equation}  

By \Cref{cor:coker-1}, homotopy classes of lifts of $f\: \CP^{n+2} \to F$ to $G$ are a torsor for $\op{coker}(\pi_1U_* )$.  
To enumerate lifts of $f$ to $G$, we use \Cref{lem:lift-2} with fiber sequences
\[ G \to F \xrightarrow{U} K(\Z/3,2n+5),\,\,\text{   and    }\,\,F \to BU \to  K(\Z,2n+2) \times K(\Z,2n+4) ,\]
as \Cref{eq:fib1},
and \Cref{eq:fib2}, respectively. 
If we fix $A\: S^1 \times  \CP^{n+2} \to F$ representing an element in $\pi_1(F^{\CP^{n+2}},f)$, the image of $\pi_1U_*$ is precisely the set of homotopy classes $U \circ \lambda$ represented in \Cref{cd:action} below:
\begin{equation}\label[diagram]{cd:action}
\begin{tikzcd}
{S^1 \times \CP^{n+2}}\ar[r, dashed, "\lambda"] \ar[dr, bend right = 5, "{(x, y, A)}\,\,\,\,\,\,\,\,\,\,\,\," below]
& F \ar[r,"U"] 
& K(\Z/3,2n+5) \\
& {K(\Z,2n+1) \times K(\Z,2n+3) \times F} \ar[u,"m"],
\end{tikzcd}
\end{equation}
as $x,y$ vary over $[\Sigma \CP^{n+2}, K(\Z,2n+1) \times K(\Z,2n+3)].$

Let $K$ denote $K(\Z,2n+1) \times K(\Z,2n+3)$. We have a diagram of fibrations
\begin{equation}\label[diagram]{map_fibs}\begin{tikzcd}
K \times K \ar[r]\ar[d,"m_1"]& K \times F \ar[d,"m"]\ar[r]&BU\ar[d,"="] \\
K \ar[r] & F \ar[r] & BU,
\end{tikzcd}\end{equation}
which induces a map on Serre spectral sequences.
On the $E_2$-page:
\begin{align*}
 & m^*(P^1\iota_{2n+1}\otimes 1 - \iota_{2n+1}\otimes c_1^2 - \iota_{2n+1}\otimes c_2 + \iota_{2n+3}\otimes c_1) \\
 & = P^1\iota_{2n+1}\otimes 1 \otimes 1 
+ 1\otimes P^1\iota_{2n+1}\otimes 1 
- 1\otimes  \iota_{2n+1}\otimes c_1^2 
-  \iota_{2n+1}\otimes 1 \otimes c_1^2 \\
 & \quad\quad -1 \otimes \iota_{2n+1}\otimes c_2 -\iota_{2n+1}\otimes 1 \otimes c_2 +1\otimes  \iota_{2n+3}\otimes c_1 + \iota_{2n+3}\otimes 1 \otimes c_1.
\end{align*}
Thus,
\[
m^*U=U\otimes 1 + P^1\iota_{2n+1}\otimes 1 - \iota_{2n+1}\otimes c_1^2 - \iota_{2n+1}\otimes c_2 +  \iota_{2n+3}\otimes c_1
.\]
Write $\iota_1t^i$ for the degree $2i+1$ generator of $\HZ{3}^*\Sigma \CP^{n+2}$. 
Let $x, y$ be integer multiples of $\iota_1t^n$ and $\iota_1t^{n+1}$, respectively, so that $x=x_0\iota_1t^n$ and $y=y_0\iota_1t^{n+1}$.
Then: 
\begin{align*}
 (x,y,A)^*(m^*U ) - (0, 0, A)^*(m^*U) &= P^1(x_0\iota_1t^n) - (x_0\iota_1t^n)c_1(h)^2t^2 - (x_0\iota_1t^n)c_2(h)t^2 \\
& \quad \quad + (y_0\iota_1t^{n+1})c_1(h)t \\
 &= nx_0\iota_1 t^{n+2} - x_0\iota_1c_1(h)^2t^{n+2} - x_0\iota_1c_2(h)t^{n+2} + y_0\iota_1c_1(h)t^{n+2} \\
 &=(nx_0-x_0c_1(h)^2-x_0c_2(h)+y_0c_1(h))\iota_1t^{n+2} \\
 &=(y_0c_1(h)-x_0c_1(h)^2-x_0c_2(h))\iota_1t^{n+2}.
\end{align*}
If $c_1(h), c_2(h)$ are both zero modulo 3, the above is zero regardless of integers $x_0, y_0$. 
 Otherwise, the image as $x_0, y_0$ vary is all of $\Z/3$. 
\end{proof}

\section{Computations at the prime $2$}\label{MoorePost2}


In this section, we focus on computing maps from $\CP^{n+2}$ to $\c{BU(n)}{2}$ lifting a given map \[h\:\CP^{n+2} \to BU \to \c{BU}{2}.\] By \Cref{cor:product23}, this amounts to calculating the power of $2$ dividing the size of $\Vect^h_{n}(\CP^{n+2})$. Our computations break up based on $n\pmod{4}$ and the Chern classes of $h$ modulo $4$, with some dependence on $n\pmod{8}$.

In \Cref{subsec:minimal}, we deal with cases that require minimal work by appealing to our preliminary results from \Cref{structure}.  
We consider the remaining harder case $n\equiv 1\pmod{4}$ in \Cref{sec:n1mod4}. The latter section uses obstruction-theoretic approaches to lifting problems. Depending on $n\pmod{8}$, we study two variations on a Moore--Postnikov-type factorization of the morphism $\c{BU(n)}{2} \to \c{BU}{2}$. Choosing the right factorization is important for solving the problem. We give a more detailed roadmap of the set-up for each case in \Cref{rmk:roadmap} below.

\begin{convention}\label{complete-conv}
Throughout the remainder of the paper, all completions are at the prime $2$. To simplify notation, for any space or group $X$, we write $\c{X}{}$ in place of $\c{X}{2}$.
\end{convention}

\begin{rmk}\label{rmk:roadmap}
The $2$-primary obstruction-theoretic argument can be outlined as follows:
\begin{itemize}
\item If $n\equiv 3,7\pmod{8}$, the preliminary results from \Cref{structure} determine the answer without any Moore--Postnikov calculations.

\item If $n\equiv 1\pmod{8}$, we use a tower of principal fibrations:
\begin{equation}\label[diagram]{towerx}
Y\longrightarrow X\longrightarrow F\longrightarrow BU,
\end{equation}
where the fibration defining $F$ has as its base a product of integral Eilenberg--Mac Lane spaces, and those defining the higher stages have $\Z/2$-Eilenberg--Mac Lane spaces as their bases. Lifts of $h$ to $\c{Y}{}$ are in bijection with lifts to $\c{BU(n)}{}$.

\item If $n\equiv 5\pmod{8}$, we modify the tower from the case $n\equiv 1\pmod{8}$ in two ways. First, since
\[
\c{\pi_{2n+4}BU(n)}{}\cong
\c{\Vect_n^0(\CP^{n+2})}{}\cong \Z/4,
\]
we must replace the last $\Z/2$-valued obstruction arising in \Cref{towerx} by a $\Z/4$-valued lift. To make this work computationally, we also need to arrange that $c_1(h)$ is even. This is possible since $n$ is odd: by tensoring with a suitable line bundle, we reduce to the case $c_1(h)\equiv 0\pmod{4}$, and hence in particular to the case in which $c_1(h)$ is even. It is therefore convenient to modify the entire tower \Cref{towerx} and construct a tower of principal fibrations:
\begin{equation}\label[diagram]{towery}
\widetilde{Y}_2\longrightarrow X_2\longrightarrow F_2\longrightarrow BU_2,
\end{equation}
where $BU_2$ classifies stable vector bundles with even first Chern class. In \Cref{towery}, the base of the fibration defining $F_2$ is a product of integral Eilenberg--Mac Lane spaces; the base of the fibration defining $X_2$ is a $\Z/2$-Eilenberg--Mac Lane space; and the base of the fibration defining $\widetilde{Y}_2$ is a $\Z/4$-Eilenberg--Mac Lane space.

Lifting to $\c{(\widetilde{Y}_2)}{}$ is sufficient to classify $2$-complete rank $n$ vector bundles on $\CP^{n+2}$ with even first Chern class. However, lifting along the last stage in \Cref{towery} necessitates some fairly involved $\Z/4$-cohomology computations.
\end{itemize}
\end{rmk}

\subsection{Easy cases}\label{subsec:minimal} We begin by treating as many cases as possible with tools from \Cref{structure}. If $n\equiv 3 \pmod{4}$, then $\c{(\pi_{2n+4}BU(n))}{}\cong 0$ by \cite[page 970]{Mimura_HBAT}. By the $2$-complete version of \Cref{lem1}, we find that there is a transitive action of $\c{(\pi_{2n+4}BU(n))}{}\cong 0$ on $\c{\Vect^h_n(\CP^{n+2})}{},$ which is moreover non-empty by \Cref{lem:exists-n-odd-c1-even}. Therefore $\c{\Vect^h_n(\CP^{n+2})}{} $ is a singleton. We deduce that:
\begin{thm}\label{thm:n3mod4}Let $n\equiv 3\pmod{4}$ and let $h\: \CP^{n+2}\to BU$ be a reduced complex $K$-theory class with $c_{n+1}(h)=c_{n+2}(h)=0$. Then 
\[
\#\Vect^h_n(\CP^{n+2})=\begin{cases} 3 & \text{ if } n\equiv c_1(h) \equiv c_2(h) \equiv 0 \pmod{3};\\
1 & \text{ otherwise.} \end{cases}\]
\end{thm}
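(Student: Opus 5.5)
We assemble the count prime by prime. First, $n\equiv 3\pmod 4$ is odd, so \Cref{lem:exists-n-odd-c1-even} applies: the set $\Vect^h_n(\CP^{n+2})$ is non-empty, because $c_{n+1}(h)=c_{n+2}(h)=0$ by hypothesis. Hence every $p$-complete set $\c{\Vect^h_n(\CP^{n+2})}{p}$ is non-empty. By \Cref{cor:product23},
\[
\#\Vect^h_n(\CP^{n+2}) = \#\c{\Vect^h_n(\CP^{n+2})}{2}\cdot \#\c{\Vect^h_n(\CP^{n+2})}{3},
\]
so it remains to compute the two factors.

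For the $2$-primary factor, we use that $\c{(\pi_{2n+4}BU(n))}{2}=0$ when $n\equiv 3\pmod 4$, by \cite[page 970]{Mimura_HBAT}. We then apply the $2$-complete version of \Cref{lem1}. Any two $2$-complete rank $n$ representatives of $h$ agree on $\CP^{n+1}$ and differ by the action of $\c{(\pi_{2n+4}BU(n))}{2}$ on the top cell, as in \Cref{ACTION}. Since this group is trivial, the non-empty set $\c{\Vect^h_n(\CP^{n+2})}{2}$ is a singleton. This is the argument already given just before the statement. The only point needing care is that the input from \cite[Theorem 4.3]{opie24enum} (uniqueness of the restriction to $\CP^{n+1}$) also holds after $2$-completion. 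We would justify this either via \Cref{cor:pcomplete} or by rerunning that argument with completed targets.

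For the $3$-primary factor, we invoke \Cref{main3count}, which applies since $n$ is odd and $c_{n+1}(h)=c_{n+2}(h)=0$. It says there are exactly three lifts of $\c{h}{3}$ when $n\equiv 0\pmod 3$ and $c_1(h)\equiv c_2(h)\equiv 0\pmod 3$, and exactly one lift otherwise. Multiplying the two factors gives $3$ in the first case and $1$ otherwise, which is the claimed formula.

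The substantive input is \Cref{main3count}, and with that already proved the argument is bookkeeping. The step I would check most carefully is the $p$-complete reduction in \Cref{cor:product23}. There one must confirm that the rational component of the fracture square contributes no extra choices: rationally, bundles are determined by Chern classes, so the rational lift is unique. One must also confirm that the primes $p\geq 5$ contribute trivially, which needs the vanishing of $\c{(\pi_{2n+4}BU(n))}{p}$ for $p\ge5$ in this range.
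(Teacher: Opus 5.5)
Your proposal is correct and is essentially the paper's argument. It uses nonemptiness from \Cref{lem:exists-n-odd-c1-even}, the splitting of \Cref{cor:product23}, a singleton $2$-primary factor (the $2$-primary part of $\pi_{2n+4}BU(n)$ vanishes for $n\equiv 3\pmod 4$, so the $2$-complete version of \Cref{lem1} gives a transitive action of the trivial group), and the $3$-primary count from \Cref{main3count}; the points you flag for extra care (completing the input from \cite{opie24enum} and the fracture-square bookkeeping behind \Cref{cor:product23}) are likewise taken for granted in the paper.
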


\subsection{The case $n\equiv 1 \pmod{4}$}\label{sec:n1mod4}


The relevant Steenrod square actions on Chern classes are
\[
\Sq^2c_{n+1} = c_1c_{n+1}+c_{n+2}, \quad \Sq^2c_{n+2} = c_1c_{n+2}, \quad \text{and} \quad \Sq^4c_{n+1} = c_2c_{n+1},
\]
by \Cref{prop:Chernform2}. We form the Serre spectral sequence for the fiber sequence
\[ K(\Z,2n+1) \times K(\Z,2n+3)\to F \to BU,\]
where $F$ is the fiber of $c_{n+1} \times c_{n+2}$. The $E_2$-page is depicted in \Cref{fig7}.

\begin{figure}[h]
\centering
\textbf{The $E_2$-page for the Serre spectral sequence computing $\HZ{2}^*F$}\par\medskip
\begin{tikzpicture}
\matrix (m) [matrix of math nodes,
nodes in empty cells,nodes={minimum width=2ex,
minimum height=2ex,outer sep=-1pt},
column sep=.2ex,row sep=.2ex]{
t&\\
2n+6 && \Sq^5\iota_{2n+1},\,\Sq^3\iota_{2n+3}& & & & & & & & & & && \\
2n+5 && \Sq^4\iota_{2n+1},\,\Sq^2\iota_{2n+3} & & & & & & & & & & && \\
2n+4 &&\Sq^3\iota_{2n+1} & & & & & & & & & & && \\
2n+3 &&\Sq^2\iota_{2n+1},\, \iota_{2n+3} & & & & & & & & & & && \\
2n+2 && & & & & & & & & & & && \\
2n+1 && \iota_{2n+1} & &  & & & & & & & & && \\
\vdots\\
0&&1 & & & & \hdots &&  c_{n+1} &  & c_{n+2} & & c_{n+3}\\
&& & & & & & & & & & & && \\
\quad\strut & & 0  & &  && \hdots& & 2n+2 & &2n+4& & 2n+6 & & q \strut \\};
\draw[->] (m-5-3.south east) -- (m-9-11.north west);
\draw[->] (m-7-3.south east) -- (m-9-9.north west);
\draw[thick] (m-1-2) -- (m-11-2) ;
\draw[thick] (m-10-1) -- (m-10-14) ;
\end{tikzpicture}
\caption{A schematic of the $E_2$-page for the $\Z/2$-cohomology Serre spectral sequence for the fiber sequence $K(\Z,2n+1)\times K(\Z, 2n+3) \to F \to BU$. Only multiplicative generators are shown. Arrows indicate differentials on key classes.}\label{fig7}
\end{figure}
 We find that
\begin{itemize}
 \item $d_{2n+4}(\Sq^2\iota_{2n+1}) = \Sq^2d_{2n+2}(\iota_{2n+1}) = \Sq^2c_{n+1} = c_1c_{n+1}+c_{n+2}$,
 \item $d_{2n+5}(\Sq^3\iota_{2n+1}) = \Sq^3d_{2n+2}(\iota_{2n+1}) = \Sq^3c_{n+1} = 0$,
 \item $d_{2n+6}(\Sq^2\iota_{2n+3}) = \Sq^2d_{2n+4}(\iota_{2n+3}) = \Sq^2c_{n+2} = c_1c_{n+2}$, and
 \item $d_{2n+6}(\Sq^4\iota_{2n+1}) = \Sq^4d_{2n+2}(\iota_{2n+1}) = \Sq^4c_{n+1} = c_2c_{n+1}$.
\end{itemize}
It follows that 
\[
\Sq^2\iota_{2n+1}+\iota_{2n+3}, \quad \Sq^3\iota_{2n+1}, \quad \Sq^2\iota_{2n+3}, \quad \Sq^4\iota_{2n+1},
\]
are permanent cycles that detect classes $u,v,m,w$, respectively, in $\HZ{2}^{*}F$, which are represented in the double complex as follows:
\begin{align*}
u &=[\Sq^2\iota_{2n+1}\otimes 1 + \iota_{2n+1} \otimes c_1 + \iota_{2n+3}\otimes 1] & v= [\Sq^3\iota_{2n+1}\otimes 1] \\
m&= [\Sq^2\iota_{2n+3}\otimes 1 + \iota_{2n+3} \otimes c_1 ] & w=[\Sq^4\iota_{2n+1} \otimes 1  +\iota_{2n+1} \otimes c_2].
\end{align*} 
By \cite[1.5]{singer73}, we find that, up to terms of lower filtration,
\[
 \Sq^1u=v,  \quad  \Sq^2u=m, \quad \Sq^1w=\Sq^2v, \quad \text{and} \quad \Sq^3u= \Sq^1m
\]
in the cohomology of $F$.

We now build the next stage of the tower. Consider the map
\[
u \: F \to K(\Z/2, 2n+3).
\]
Write $X$ for its homotopy fiber, which fits into \Cref{diag:g1mod4}.

\begin{equation}\label[diagram]{diag:g1mod4}
\begin{tikzcd}[column sep = large]
& X\ar[d] & &  \\
& F\ar[d] \ar[r,"u "] & K(\Z/2,2n+3)   \\
BU(n) \ar[r] \ar[ur, bend left = 5,dashed] \ar[uur, bend left = 20, dashed] & BU \ar[r,"c_{n+1}\times c_{n+2}"] & K(\Z,2n+2)\times K(\Z, 2n+4).
\end{tikzcd} 
\end{equation}
Consider the Serre spectral sequence for $X$ as shown in \Cref{fig4}.
\begin{figure}[h]
\centering
\textbf{The $E_2$-page of the Serre spectral sequence computing $\HZ{2}^*X$}\par\medskip
\begin{tikzpicture}
\matrix (m) [matrix of math nodes,
nodes in empty cells,nodes={minimum width=2.5ex,
minimum height=2.5ex,outer sep=-1pt},
column sep=.25ex,row sep=.25ex]{
t&\\
2n+4 && \Sq^2\iota_{2n+2} &  & & & & & & & & & &&& \\
2n+3 && \Sq^1\iota_{2n+2} & & & & & & & & & & &&& \\
2n+2 && \iota_{2n+2} & &  & & & & & & & & &&& \\
\vdots\\
0&&1 & & & & \hdots &&  u &  & v & &m,w & \\
&& & & & & & & & & & & && \\
\quad\strut & & 0  & &  && \hdots& & 2n+3 & &2n+4& & 2n+5 & & & q \strut \\};
\draw[->] (m-4-3.south east) -- (m-6-9.north west);
\draw[->] (m-3-3.south east) -- (m-6-11.north west);
\draw[->] (m-2-3.south east) -- (m-6-13.north west);
\draw[thick] (m-1-2) -- (m-8-2) ;
\draw[thick] (m-7-1) -- (m-7-15) ;
\end{tikzpicture}
\caption{A schematic of the $E_2$-page for the $\Z/2$-cohomology Serre spectral sequence for the fiber sequence $K(\Z/2,2n+2) \to X \to F$. Only key multiplicative generators are shown. Arrows indicate differentials.}\label{fig4}
\end{figure}
Note that the classes $u,v,m,\Sq^1m$ are all killed by transgressions of Steenrod squares on $\iota_{2n+2}$. We find that 
\[\HZ{2}^{*\leq 2n+6}X\cong\left( \Z/2[c_1,\ldots,c_n]\otimes \Z/2\{1, w\}\right)^{*\leq 2n+6},\]
where $w$ is in degree $2n+5$ and is pulled back from the cohomology of $F$. Define
\begin{equation}\label{def:Y1} Y= \fib \left(w\: X \to K(\Z/2,2n+5)\right).\end{equation}
\begin{lemma}\label{n1mod8-Y-enough} If $n\equiv 1 \pmod{8}$, the natural map $BU(n) \to Y$ induces a bijection \[[\CP^{n+2}, \c{BU(n)}{}]\cong [\CP^{n+2},\c{Y}{}].\]
\end{lemma}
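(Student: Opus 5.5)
We would model the argument on \Cref{prop:prime3hardcase}: show that a lift $BU(n)\to Y$ (which we construct first) becomes an equivalence after $2$-completion and $(2n+4)$-truncation. Since $\CP^{n+2}$ is $(2n+4)$-skeletal, the bijection $[\CP^{n+2},\c{BU(n)}{}]\cong[\CP^{n+2},\c{Y}{}]$ then follows.

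\emph{Constructing the map.} The map $BU(n)\to F$ of \Cref{diag:f} exists because $c_{n+1}$ and $c_{n+2}$ vanish on $BU(n)$. To lift it to $X$, we need $u$ to pull back to zero in $\HZ{2}^{2n+3}BU(n)$, which holds because that group is zero in odd degree. Likewise $w\in\HZ{2}^{2n+5}X$ pulls back to $\HZ{2}^{2n+5}BU(n)=0$, so we get a lift $BU(n)\to Y$. Different choices of lift do not affect the argument below.

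\emph{Cohomology comparison.} We compute $\HZ{2}^{*}Y$ through degree $2n+5$ or so, using the Serre spectral sequence of $K(\Z/2,2n+4)\to Y\to X$. The input is the computation already made: in this range $\HZ{2}^*X\cong\Z/2[c_1,\dots,c_n]\otimes\Z/2\{1,w\}$. The fundamental class $\iota_{2n+4}$ transgresses to $w$, killing it. The next fiber classes are $\Sq^1\iota_{2n+4}$ and $\Sq^2\iota_{2n+4}$, which transgress to $\Sq^1w$ in degree $2n+6$ and $\Sq^2w$ in degree $2n+7$. Since $w$ is the only odd class of $X$ in this range, $\Sq^1w$ vanishes in $\HZ{2}^{2n+6}X$ (it equals $\Sq^2v$, and $v$ is already dead). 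So $\Sq^1\iota_{2n+4}$ survives and gives a class in degree $2n+5$ not coming from $BU(n)$.

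This is exactly where $n\pmod 8$ enters. We therefore apply \Cref{lem:basic} in the form that only needs an isomorphism through degree $2n+4$ and an injection in degree $2n+5$; equivalently, we compare homotopy groups directly.

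\emph{Homotopy-group check.} The concrete route is to read off from the tower that $\pi_iY$ agrees with $\c{\pi_iBU}{}$ for $i\le 2n$. We then check by hand that $\pi_{2n+1},\dots,\pi_{2n+4}$ of $\c{Y}{}$ are $\c{\pi_*BU(n)}{}$ and that the map induces isomorphisms on them. For $n$ odd, \cite{Mimura_HBAT} gives $\pi_{2n+1}BU(n)=0$ after $2$-completion, $\pi_{2n+2}BU(n)\cong\Z$, and $\pi_{2n+3}BU(n)\cong\Z/2$. For $n\equiv1\pmod 8$, $\c{\pi_{2n+4}BU(n)}{}\cong\Z/2$; this agrees with \Cref{thm:yang-main}, since $\#\Vect^0_n=2$ or $6$. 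On the $Y$ side, the fibers contribute $\Z$ in degree $2n+1$, $\Z$ in degree $2n+3$, $\Z/2$ in degree $2n+2$, and $\Z/2$ in degree $2n+4$. The $k$-invariants $u$ and $w$ glue these together: $u$ restricts on the fiber to $\Sq^2\iota_{2n+1}+\iota_{2n+3}$, so the connecting maps make $\Z\to\Z$ multiplication by $2$ and so on. The degree-$2n+4$ fiber $\Z/2$ then matches $\c{\pi_{2n+4}BU(n)}{}\cong\Z/2$.

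To finish, we verify the map is surjective in degree $2n+4$ on mod $2$ cohomology (dually), using \Cref{lem:basic} through degree $2n+4$ together with the order count. Combined with \Cref{lem:lift-pin}, this yields the equivalence of $(2n+4)$-truncations after completion.

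\emph{Main obstacle.} The hardest step is the top degree $2n+4$: showing that the single $\Z/2$ introduced by $w$ maps isomorphically onto $\c{\pi_{2n+4}BU(n)}{}$ rather than being hit trivially. The approach is to use that $\Sq^1\iota_{2n+4}$ survives in $Y$ while in $BU(n)$ the corresponding class is absent. Then we check via the Hurewicz/universal coefficient argument that the group orders agree, using $\#\c{\Vect_n^0(\CP^{n+2})}{}=2$ for $n\equiv 1\pmod 8$ and \Cref{Cor1} to identify it with $\c{\pi_{2n+4}BU(n)}{}$.
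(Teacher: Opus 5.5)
Your last step is exactly the paper's proof, and it suffices. That step applies \Cref{lem:basic} with $i=2n+4$, which gives an isomorphism on $\pi_{\le 2n+3}$ and a surjection on $\pi_{2n+4}$ after completion. It then compares $\pi_{2n+4}\c{Y}{}\cong\Z/2$ with $\c{\pi_{2n+4}BU(n)}{}\cong\Z/2$.

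However, the argument you give for the top degree, which you call the main obstacle, is wrong. First, $\Sq^1\iota_{2n+4}$ does \emph{not} survive. The tower gives $\pi_{2n+5}Y=\pi_{2n+5}X=\pi_{2n+5}F=0$. So if $\c{BU(n)}{}\to\c{Y}{}$ is an isomorphism on $\pi_{\le 2n+4}$, which is what you want, it is $(2n+5)$-connected. Then $\HZ{2}^{2n+5}Y$ injects into $\HZ{2}^{2n+5}BU(n)=0$. Thus the lemma forces $\Sq^1 w\neq 0$ in $\HZ{2}^{2n+6}X$ for $n\equiv 1\pmod 8$. The relation $\Sq^1w=\Sq^2v$ holds only modulo lower Serre filtration, and here the correction term is a class that vanishes on $BU(n)$; such classes exist in $\HZ{2}^{2n+6}X$, e.g.\ $c_{n+3}$. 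It is for $n\equiv 5\pmod 8$ that $\Sq^1 w=0$ and $w$ lifts to $\Z/4$. A surviving class in $\HZ{2}^{2n+5}Y$ would prove the opposite of what you want, namely that $\pi_{2n+4}$ is not an isomorphism. It would also defeat the ``injection in degree $2n+5$'' form of \Cref{lem:basic} that you invoke.

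Second, the order count cannot come from $\#\c{\Vect^0_n(\CP^{n+2})}{}=2$ and \Cref{Cor1}. By \Cref{lem1}, $\Vect^0_n(\CP^{n+2})$ is the orbit of the trivial bundle under $\pi_{2n+4}BU(n)$, i.e.\ a quotient of that group. So this only yields $|\c{\pi_{2n+4}BU(n)}{}|\ge 2$, which the surjection onto $\pi_{2n+4}\c{Y}{}\cong\Z/2$ already gives. You need the upper bound, i.e.\ the actual value $\c{\pi_{2n+4}BU(n)}{}\cong\Z/2$ from \cite{Mimura_HBAT}; that is the input the paper uses.

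Finally, drop the hand check of $\pi_{2n+1},\dots,\pi_{2n+3}$ and the appeal to \Cref{lem:lift-pin}. \Cref{lem:basic} already covers those degrees, and the values you state are wrong. For $n$ odd, $\pi_{2n+1}BU(n)\cong\Z/n!$, $\pi_{2n+2}BU(n)=0$, and $\pi_{2n+3}BU(n)\cong\Z/\tfrac{(n+1)!}{2}$. In the tower, the $K(\Z,2n+1)$ and $K(\Z,2n+3)$ fibers are glued to $\pi_{2n+2}BU\cong\pi_{2n+4}BU\cong\Z$ via $c_{n+1}$ and $c_{n+2}$, which act by $\pm n!$ and $\pm(n+1)!$. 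The $\Z/2$ fiber of $u$ removes a factor of $2$ from $\pi_{2n+3}$ rather than contributing to $\pi_{2n+2}$.
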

\begin{proof} This follows from \Cref{lem:basic} and the fact that $\pi_{2n+4}BU(n)\cong \Z/2$.
\end{proof}
If $n\equiv 5\pmod{8}$, by \Cref{lem:lift-pin} and the fact that $\pi_{2n+4}BU(n)\cong \Z/4$, $\Sq^1w=0$ and $w$ lifts to a $\Z/4$-cohomology class $\tilde{w}\:X \to K(\Z/4,2n+5).$ We define:
\begin{equation}\label{def:tildeL1} \tilde Y= \fib \left(\tilde w\: X \to K(\Z/4,2n+5)\right).\end{equation}

\begin{lemma}\label{n1mod8-tildeL-enough} If $n\equiv 5 \pmod{8}$, there is a natural map $BU(n) \to \tilde{Y}$ that induces a bijection \[[\CP^{n+2}, \c{BU(n)}{}]\cong [\CP^{n+2},\c{\tilde{Y}}{}].\] Moreover, the composite $BU(n) \to Y$ induces a surjection \[ [\CP^{n+2},\c{BU(n)}{}] \twoheadrightarrow[\CP^{n+2},\c{Y}{}].\]
\end{lemma}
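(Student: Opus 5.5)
We prove the lemma by comparing $BU(n)$ with the tower $\tilde Y\to X\to F\to BU$ on homotopy groups through degree $2n+4$, applying \Cref{lem:basic} together with \Cref{lem:lift-pin}, exactly as in \Cref{n1mod8-Y-enough}. Since $\CP^{n+2}$ is $(2n+4)$-skeletal, it suffices to produce a map $BU(n)\to\tilde Y$ whose $2$-completion induces an equivalence $\tau_{\leq 2n+4}\c{BU(n)}{}\simeq\tau_{\leq 2n+4}\c{\tilde Y}{}$.

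\textbf{Step 1: the comparison in degrees $\leq 2n+3$.} First we check that the lift $BU(n)\to X$, which exists because the obstructions $c_{n+1},c_{n+2}$ and then $u$ pull back to zero in $H^*BU(n)$, is already $(2n+4)$-connected after $2$-completion. By construction,
\[
\HZ{2}^{*\leq 2n+6}X\cong\left(\Z/2[c_1,\dots,c_n]\otimes\Z/2\{1,w\}\right)^{*\leq 2n+6},
\]
so $BU(n)\to X$ is an $\HZ{2}$-isomorphism in degrees $\leq 2n+4$. It is injective in degree $2n+5$ modulo $w$, and $w$ pulls back to $0$ on $BU(n)$. By \Cref{lem:basic}, the map is an isomorphism on $\c{\pi_i}{}$ for $i\leq 2n+3$ and a surjection for $i=2n+4$.

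\textbf{Step 2: the $k$-invariant.} The computation shows $\c{\pi_{2n+4}X}{}=0$: there is no rational or odd contribution, and the $\HZ{2}$-cohomology has no new class in degree $2n+4$. Hence $\tau_{\leq 2n+3}X\simeq\tau_{\leq 2n+4}X$. The class $w$ is then the first obstruction, a mod-$2$ reduction of the $k$-invariant
\[
\tau_{\leq 2n+3}\c{X}{}\to K(\c{\pi_{2n+4}BU(n)}{},2n+5).
\]
By \Cref{lem:lift-pin}, applied to $f\colon\c{BU(n)}{}\to\c{X}{}$ with $k=2n+3$, $\tau_{\leq 2n+4}\c{BU(n)}{}$ is the fiber of a map $\tilde k\colon\tau_{\leq 2n+3}\c{X}{}\to K(\Z/4,2n+5)$. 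This is where the input $\c{\pi_{2n+4}BU(n)}{}\cong\Z/4$ for $n\equiv 5\pmod 8$ enters. The reduction of $\tilde k$ mod $2$ must be nonzero, since otherwise $\pi_{2n+4}$ would split off a $\Z/2$ incorrectly. We identify it with $w$: in degree $2n+5$ the only class available is $w$, modulo decomposables pulled back from $BU(n)$, which vanish on $BU(n)$ and can be absorbed. We then define $\tilde w:=\tilde k$ composed with $X\to\tau_{\leq 2n+3}X$. This gives $BU(n)\to\tilde Y$ and the equivalence on $\tau_{\leq 2n+4}$, hence the first bijection.

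\textbf{Step 3: surjectivity onto $Y$.} Reduction mod $2$, $\Z/4\to\Z/2$, gives a map of fibrations $\tilde Y\to Y$ over $X$, with fibers $K(\Z/4,2n+4)\to K(\Z/2,2n+4)$, which is surjective on $\pi_{2n+4}$. Given a map $\CP^{n+2}\to\c{Y}{}$, project it to $\c{X}{}$. Its obstruction to lifting to $\tilde Y$ lies in $H^{2n+5}(\CP^{n+2};\Z/4)=0$, so a lift to $\tilde Y$ exists. Lifts of a fixed map to $X$ into $\tilde Y$ (respectively $Y$) form an orbit under $H^{2n+4}(\CP^{n+2};\Z/4)$ (respectively $H^{2n+4}(\CP^{n+2};\Z/2)$), and the map $\tilde Y\to Y$ is equivariant along the surjection $\Z/4\to\Z/2$. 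Therefore every lift to $Y$ is hit, and composing with the first bijection gives the claimed surjection.

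\textbf{Main obstacle.} The delicate point is Step 2: showing that the $\Z/4$ $k$-invariant reduces mod $2$ exactly to $w$, and not to $w$ plus a correction, so that $\tilde w$ is a genuine lift of $w$. This requires $\Sq^1 w=0$ in $H^*X$, which is forced because $\beta\circ\mathrm{red}$ kills classes coming from $\Z/4$-cohomology. It also requires controlling the indeterminacy from decomposables. I would handle this by noting that any such indeterminacy consists of classes pulled back from $BU(n)$ through degree $2n+5$, which do not affect lifts from a $(2n+4)$-dimensional source.
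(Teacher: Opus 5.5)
Your proposal is correct and takes essentially the paper's route. \Cref{lem:basic} together with \Cref{lem:lift-pin} (which is how the paper produces $\tilde w$ in the first place) identifies the $(2n+4)$-truncation of $\tilde Y$ with that of $BU(n)$ after $2$-completion. Surjectivity onto lifts to $Y$ is an obstruction argument in $H^{2n+5}(\CP^{n+2})=0$; the paper phrases it via the fiber sequence $\tilde Y\to Y\to K(\Z/2,2n+5)$, which is equivalent to your equivariance argument.

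The one point to tighten is $\pi_{2n+4}X=0$. This does not follow from the absence of new mod $2$ cohomology in degree $2n+4$. It does follow at once from the long exact sequences: $c_{n+2}$ is injective on $\pi_{2n+4}BU$ (it is multiplication by $(n+1)!$), so $\pi_{2n+4}F=0$, and since $\pi_{2n+5}K(\Z/2,2n+3)=0$, also $\pi_{2n+4}X=0$.
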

\begin{proof} The first statement follows from \Cref{lem:basic}, observing that $\pi_{2n+4}BU(n)\cong \Z/4$. The second follows from the fact that we have a fiber sequence $\tilde{Y} \to Y \to K(\Z/2,2n+5).$ 
\end{proof}

We would like to compute lifts of a given $K$-theory class to $Y$ or $\tilde{Y}$ via an action of some Eilenberg--Mac Lane space on $X$ (so that we can apply \Cref{lem:lift-0} and \Cref{lem:lift-2}). However, there are some subtleties.
For example, in the case that $n\equiv 5 \pmod{8}$, we have a diagram of homotopy pullbacks
\begin{equation}\label[diagram]{issue:1mod2}
\begin{tikzcd}
\fib(\Sq^2+ \iota_{2n+3})\ar[d]\ar[r]&X\ar[d]\ar[r] & * \ar[d]\\
K(\Z,2n+1) \times K(\Z,2n+3) \ar[rr, bend right=10, "\Sq^2 + \iota_{2n+3}" {near start, below}]\ar[d]\ar[r]& F \ar[r,"u"]\ar[d,crossing over] & K(\Z/2,2n+3) \\
* \ar[r] & BU ,
\end{tikzcd}
\end{equation}
However, we cannot realize $X\to BU$ as the fiber of some map to a delooping of
 $\fib(\Sq^2 + \iota_{2n+3})$.  We work around this problem in the next few sections. 

\subsubsection{The case $n\equiv 1\pmod{8}$}\label{n1mod8-start}

With notation as introduced in the previous section, note that
\[ Y=\fib\left( u \times w\: F \to K(\Z/2,2n+3) \times K(\Z/2,2n+5) \right).\]

Thus, we can analyze the lifting problem of \Cref{diag:g1mod4-22} below.

\begin{equation}\label[diagram]{diag:g1mod4-22}
\begin{tikzcd}[column sep = large]
& Y\ar[d] & &  \\
& F\ar[d] \ar[r,"u\times w "] & K(\Z/2,2n+3)\times K(\Z/2,2n+5)   \\
BU(n) \ar[r] \ar[ur, bend left = 5,dashed] \ar[uur, bend left = 20, dashed] & BU \ar[r,"c_{n+1}\times c_{n+2}"] & K(\Z,2n+2)\times K(\Z, 2n+4).
\end{tikzcd} 
\end{equation}

We apply \Cref{rmk:practice} to the fiber sequences 
\[Y \to F \to K(\Z/2,2n+3) \times K(\Z/2,2n+5)\] and 
\[ F \to BU \to K(\Z,2n+2) \times K(\Z,2n+4).\] 
Consider the action $m\: K(\Z,2n+1) \times K(\Z,2n+3) \times F \to F$, which induces a map of Serre spectral sequences. We find that
the number of homotopy lifts of a given class $h\: \CP^{n+2} \to BU$ to $Y$ equals the size of the cokernel of 
\[
\pi_1 (u\times w)_*: \pi_1(F^{\CP^{n+2}}, f) \to \pi_1( (K(\Z/2,2n+3)\times K(\Z/2, 2n+5))^{\CP^{n+2}},0 ),
\]
where $f\:\CP^{n+2} \to F$ is any lift of $h\: \CP^{n+2} \to BU$.
Since $\pi_1(BU^{\CP^{n+2}},h)=0$, by \Cref{lem:lift-2}, 
the image of $\pi_1(u \times w)_*$ can be computed as the set of all homotopy classes
\[(x,y,A)^*(m^*(u \times w)) - (0,0,A)^*(m^*(u\times w))\]
for $A \in \pi_1(F^{\CP^{n+2}}, f)$ fixed and $x,y$ varying over 
$\pi_1((K(\Z,2n+1)\times K(\Z, 2n+3))^{\CP^{n+2}},0)$, as summarized in
 \Cref{cd:action1mod4} below. 
\begin{equation}\label[diagram]{cd:action1mod4}
\begin{tikzcd}[column sep=.5em]
{S^1 \times \CP^{n+2}}\ar[r, dashed] \ar[dr, bend right = 5, "{(x, y, A)}\,\,\,\,\,\,\,\,\,\,\,\,\,\,\," {below, near start}]
& F \ar[r,"u\times w"] 
& {K(\Z/2,2n+3) \times K(\Z/2, 2n+5)} \\
& {K(\Z,2n+1)\times K(\Z,2n+3) \times F} \ar[u,"m"] & .
\end{tikzcd}
\end{equation}
Write $\iota_1t^i$ for the degree $2i+1$ generator of $\HZ{2}^*\Sigma \CP^{n+2}$. 
Then $x = x_0\cdot \iota_1t^n$ and $y = y_0\cdot \iota_1t^{n+1}$, where $x_0$ and $y_0$ are integers.

On the $E_2$-page of the Serre spectral sequence for $K(\Z,2n+1) \times K(\Z,2n+3) \to  F \to BU$, the cohomology class $u$ is represented by
$
\iota_{2n+3}\otimes 1 + \Sq^2\iota_{2n+1}\otimes 1 + \iota_{2n+1}\otimes c_1,
$
and the class $w$ is represented by 
$
\Sq^4\iota_{2n+1}\otimes 1 + \iota_{2n+1}\otimes c_2.
$
This implies that, on the $E_2$-page for the Serre spectral sequence of the fibration $K(\Z,2n+1) \times K(\Z,2n+3) \times F \to F \times F \to BU$, the elements $m^*u$ and $m^*w$ are represented as follows:
\begin{align*}
 m^*u &= [1\otimes u + \iota_{2n+3}\otimes 1 + \Sq^2\iota_{2n+1}\otimes 1 + \iota_{2n+1}\otimes c_1]\\
 m^*w &= [1 \otimes w  + \Sq^4\iota_{2n+1}\otimes 1 + \iota_{2n+1}\otimes c_2].
\end{align*}
It follows that
\begin{align*}
  & \, \left( (x,y,A)^*(m^*u)-(0,0,A)^*(m^*u), \,\, (x,y,A)^*(m^*w)-(0,0,A)^*(m^*w) \right) \\
  &=  \left( y_0\iota_1t^{n+1}+ \Sq^2(x_0\iota_1t^n) + x_0\iota_1t^n\cdot c_1(h)t, \,\,  \Sq^4(x_0\iota_1t^n) + c_2(h)t^2x_0\iota_1t^n \right) \\
  &= \left( \left(y_0+ nx_0+c_1(h)x_0\right)\iota_1t^{n+1}, \,\, (\binom{n}{2}x_0+ c_2(h)x_0 )\iota_1t^{n+2} \right).
\end{align*}
When $n\equiv 1\pmod{4}$, the above expression reduces to
 \[\left((y_0+x_0 + c_1(h)x_0)\iota_1t^{n+1}, \,\,c_2(h)x_0\iota_1t^{n+2}\right) \in \Z/2 \{t^{n+1}\} \times \Z/2 \{t^{n+2}\}.\]
For any $c_1(h)$, we can choose $x_0, y_0$ so that the first component above is nonzero. When $c_2(h)$ is even, the second component always vanishes. So, the cokernel is $\Z/2$ when $c_2(h)$ is even and is $0$ otherwise. We have proved:
\begin{prop} \label{thm:1mod4count-1}
Let $n\equiv 1 \mod 4$. The number of homotopy classes of lifts of $h\:\CP^{n+2}\to BU$ with $c_{n+1}(h)=c_{n+2}(h)=0$ to $Y$ equals $2$ when $c_2(h)$ is even, and equals $1$ otherwise.
\end{prop}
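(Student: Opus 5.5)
The plan is to run the Moore--Postnikov lifting calculus of \Cref{lifting} on the two-stage description
\[
Y=\fib\left(u\times w\: F\to K(\Z/2,2n+3)\times K(\Z/2,2n+5)\right),
\]
using the second fiber sequence $F\to BU\to K(\Z,2n+2)\times K(\Z,2n+4)$ to supply the action of $\Omega K=K(\Z,2n+1)\times K(\Z,2n+3)$ on $F$.

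\emph{Existence of a lift.} Since $c_{n+1}(h)=c_{n+2}(h)=0$, $h$ lifts to some $f\:\CP^{n+2}\to F$. The obstruction to lifting $f$ to $Y$ is $f^*(u\times w)$. This lies in $\HZ{2}^{2n+3}\CP^{n+2}\times \HZ{2}^{2n+5}\CP^{n+2}$, which vanishes because both degrees are odd. So the set of lifts is non-empty.

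\emph{Counting lifts.} The group $\pi_1\left((K(\Z/2,2n+3)\times K(\Z/2,2n+5))^{\CP^{n+2}},0\right)\cong \HZ{2}^{2n+2}\CP^{n+2}\times\HZ{2}^{2n+4}\CP^{n+2}$ is abelian. Hence \Cref{cor:coker-1} identifies the lifts of $f$ with $\op{coker}\,\pi_1(u\times w)_*$. This cokernel is a quotient of $\Z/2\times\Z/2$.

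We must also confirm that this count is independent of the chosen lift $f$. For that, we check that $\pi_1(BU^{\CP^{n+2}},h)\cong \widetilde{K}^{-1}(\CP^{n+2})=0$. This has two consequences:
\begin{itemize}
\item all lifts $f$ of $h$ to $F$ lie in a single orbit of the $\Omega K$-action, so counting lifts of one $f$ suffices up to this identification;
\item \Cref{lem:lift-22} and \Cref{lem:lift-2} apply, with any fixed $A\in\pi_1(F^{\CP^{n+2}},f)$.
\end{itemize}
By \Cref{rmk:practice}, the image of $\pi_1(u\times w)_*$ then consists of the differences $(x,y,A)^*m^*(u\times w)-(0,0,A)^*m^*(u\times w)$, with $x=x_0\iota_1t^n$ and $y=y_0\iota_1t^{n+1}$.

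\emph{Computing $m^*u$ and $m^*w$.} We use the map of Serre spectral sequences induced by the action map $m$, as in \Cref{map_fibs}. The $E_2$-representatives
\[
u=[\Sq^2\iota_{2n+1}\otimes1+\iota_{2n+1}\otimes c_1+\iota_{2n+3}\otimes1],\qquad w=[\Sq^4\iota_{2n+1}\otimes1+\iota_{2n+1}\otimes c_2]
\]
pull back to $1\otimes u$ (respectively $1\otimes w$) plus the same expressions in the $K$-factor. Evaluating on $(x,y)$ requires the Steenrod squares on $\Sigma\CP^{n+2}$:
\[
\Sq^2 t^n=nt^{n+1},\qquad \Sq^4t^n=\tbinom n2 t^{n+2}.
\]
This gives image elements
\[
\left((y_0+nx_0+c_1(h)x_0)\iota_1t^{n+1},\ (\tbinom n2+c_2(h))x_0\,\iota_1t^{n+2}\right).
\]
For $n\equiv1\pmod4$ we have $\binom n2$ even, so the second coordinate is $c_2(h)x_0$. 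The first coordinate is surjective onto $\Z/2$ via the free variable $y_0$, independently of $x_0$. Therefore the image is $\Z/2\times\Z/2$ if $c_2(h)$ is odd, and $\Z/2\times 0$ if $c_2(h)$ is even. The cokernel has order $1$ or $2$ respectively.

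\emph{Main obstacle.} The delicate step is justifying the $E_2$-level formulas for $m^*u$ and $m^*w$. These classes are only defined modulo lower filtration, so one must argue that no filtration ambiguity affects the restriction to $S^1\times\CP^{n+2}$. I would handle this by noting two facts:
\begin{itemize}
\item the indeterminacy consists of classes pulled back from $BU$, which cancel in the difference $(x,y,A)^*-(0,0,A)^*$;
\item the relevant cohomology of $\Sigma\CP^{n+2}$ is concentrated in the single filtrations exhibited.
\end{itemize}
It is also important that $y_0$ enters the first coordinate freely. This is what guarantees the full $\Z/2$ there, regardless of the parity of $c_1(h)$.
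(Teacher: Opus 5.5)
Your route is the paper's: the same description $Y=\fib(u\times w)$, the same use of \Cref{cor:coker-1}, \Cref{lem:lift-22}, \Cref{lem:lift-2} and \Cref{rmk:practice}, the same $E_2$-representatives for $m^*u$ and $m^*w$, and the same pair $\bigl((y_0+nx_0+c_1(h)x_0)\iota_1t^{n+1},\,(\binom{n}{2}+c_2(h))x_0\,\iota_1t^{n+2}\bigr)$ with the same case split on $c_2(h)$. The paper takes these $E_2$-level formulas for granted. The trouble is confined to your ``main obstacle'' paragraph: the justification you propose there is wrong as stated.

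\textbf{The indeterminacy of $w$.} It is not a class pulled back from $BU$. In degree $2n+5$, the only positive-filtration survivor in the Serre spectral sequence for $F\to BU$ is $c_1\otimes(\Sq^2\iota_{2n+1}+\iota_{2n+3})$. So $w$ is defined only modulo $c_1u$. This term does not cancel in $(x,y,A)^*-(0,0,A)^*$; it adds $c_1(h)$ times the first coordinate to the second. It is nevertheless harmless, for two reasons. It acts on $\Z/2\times\Z/2$ by a shear automorphism, so the cokernel has the same order. And $Y$ itself is unchanged, because $u$ vanishes on $\fib(u)$. But the argument has to be made this way, not by cancellation.

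\textbf{The decisive term.} Write $K=K(\Z,2n+1)\times K(\Z,2n+3)$. The parts of $m^*w$ not detected by comparing leading terms are mixed classes in positive filtration: $\iota_{2n+1}\otimes c_2$, $\iota_{2n+1}\otimes c_1^2$, $\iota_{2n+3}\otimes c_1$, and so on. These do not cancel in the difference. In fact $\iota_{2n+1}\otimes c_2$ contributes exactly the term $c_2(h)x_0\,\iota_1t^{n+2}$ that determines the answer. So your cancellation principle, applied consistently, would erase the decisive term.

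The coefficient of $\iota_{2n+1}\otimes c_2$ is not affected by the indeterminacy above, since $m^*(c_1u)=c_1\cdot m^*u$ contains no such term. It is forced by the relation $\Sq^4c_{n+1}=c_2c_{n+1}$, which is what the correction term $\iota_{2n+1}\otimes c_2$ in the paper's double-complex representative of $w$ encodes. That relation, not a filtration or cancellation argument, is what you should cite.

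\textbf{A smaller point.} Counting lifts of a single $f$ is legitimate because $f$ is unique: $[\CP^{n+2},K]=H^{2n+1}(\CP^{n+2};\Z)\times H^{2n+3}(\CP^{n+2};\Z)=0$. The vanishing of $\pi_1(BU^{\CP^{n+2}},h)$ is what \Cref{lem:lift-22} needs; it is not what makes $f$ unique. A ``single orbit'' statement alone would not let you restrict to one $f$.
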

By \Cref{n1mod8-Y-enough}, for $n\equiv 1 \pmod{8}$, it is enough to compute lifts of $h$ to $Y$:
\begin{thm}\label{cor:1mod4count}
Suppose that $n\equiv 1 \mod 8$ and let $h\: \CP^{n+2} \to \c{BU}{}$ be given with $c_{n+1}(h)=c_{n+2}(h)=0$. The number of isomorphism classes of lifts of $h$ to $\c{BU(n)}{}$ is $2$ when $c_2(h)$ is even, and is $1$ otherwise.
\end{thm}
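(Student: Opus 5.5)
The plan is to combine the two immediately preceding results. \Cref{n1mod8-Y-enough} gives, for $n\equiv 1\pmod 8$, a bijection $[\CP^{n+2},\c{BU(n)}{}]\cong[\CP^{n+2},\c{Y}{}]$ induced by the map $BU(n)\to Y$. That map lies over $BU$, since it is a lift in \Cref{diag:g1mod4-22}. So I first check that this bijection is compatible with the projections to $[\CP^{n+2},\c{BU}{}]$. Once that holds, it restricts to a bijection between the fiber over $h$ on both sides, i.e.\ between lifts of $h$ to $\c{BU(n)}{}$ and lifts of $h$ to $\c{Y}{}$. Isomorphism classes of such lifts are exactly the homotopy classes in this fiber.

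Next I count lifts of $h$ to $\c{Y}{}$ by \Cref{thm:1mod4count-1}, which applies because $n\equiv 1\pmod 8$ implies $n\equiv 1\pmod 4$. Two points need checking. The first is that $\pi_1(\c{BU}{}^{\CP^{n+2}},h)=0$, so that \Cref{lem:lift-2} applies verbatim after $2$-completion. This holds because $\CP^{n+2}$ has only even cells and $\c{BU}{}$ has homotopy only in even degrees. The second is that $h$ admits some lift to $F$, and then to $Y$. The hypothesis $c_{n+1}(h)=c_{n+2}(h)=0$ together with \Cref{lem:exists-n-odd-c1-even} gives an actual rank $n$ representative, which maps to $Y$. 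The cokernel computation of the preceding paragraphs then gives $\Z/2$ when $c_2(h)$ is even and $0$ otherwise. Since all spaces are $2$-complete, only $c_2(h)$ mod $2$ matters, so this yields $2$ lifts when $c_2(h)$ is even and $1$ otherwise.

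The main obstacle is making the passage from the proposition, which is stated for $Y$ and integral $h$, to $\c{Y}{}$ with $h$ valued in $\c{BU}{}$ fully rigorous. Concretely, one needs that the fiber sequences used there remain fiber sequences after $2$-completion; this holds by simple connectivity and finite type. One also needs that the $\Z/2$-cohomology classes $u,w$ factor through completion, which they do. I would verify this compatibility first, and then conclude the theorem by quoting \Cref{thm:1mod4count-1}.
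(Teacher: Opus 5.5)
Your proposal is correct and follows the paper's own argument: the paper deduces this theorem in one line by combining the bijection of \Cref{n1mod8-Y-enough} with the count of lifts to $Y$ from \Cref{thm:1mod4count-1}, leaving implicit the compatibility-over-$\c{BU}{}$ and completion checks that you spell out. One small remark: since $h$ is $\c{BU}{}$-valued it need not come from an integral class, so \Cref{lem:exists-n-odd-c1-even} does not directly give a representative. Existence is easier anyway. Once $h$ lifts to $\c{F}{}$ (by vanishing of $c_{n+1}(h)$ and $c_{n+2}(h)$), the remaining obstructions $u$ and $w$ lie in $\HZ{2}^{2n+3}\CP^{n+2}=0$ and $\HZ{2}^{2n+5}\CP^{n+2}=0$.
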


\Cref{thm:1mod4count-1} also implies:
\begin{cor}\label{cor:5mod4bound}

Suppose that $n\equiv 5 \mod 8$ and let $h\: \CP^{n+2} \to BU$ be such that $c_{n+1}(h)=c_{n+2}(h)=0$. The number of isomorphism classes of lifts of 

\[\c{h}{}\:\CP^{n+2} \xrightarrow{h}BU \to \c{BU}{}\]
to $\c{BU(n)}{}$ is at least $2$ when $c_2(h)$ is even, and is $1$ otherwise.
\end{cor}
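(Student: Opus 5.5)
The plan is to combine \Cref{thm:1mod4count-1} with the surjection of \Cref{n1mod8-tildeL-enough}. Fix $h$ with $c_{n+1}(h)=c_{n+2}(h)=0$. By \Cref{lem:exists-n-odd-c1-even} (since $n$ is odd) $h$ has an integral rank $n$ representative, and composing with completion gives a $2$-complete one. Hence the set of lifts of $\c{h}{}$ to $\c{BU(n)}{}$ is nonempty.

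Next, use the tower $\tilde{Y}\to Y\to X\to F\to BU$ constructed above for $n\equiv 1\pmod 4$. When $n\equiv 5\pmod 8$, \Cref{n1mod8-tildeL-enough} says the natural map $BU(n)\to Y$ induces a surjection
\[ [\CP^{n+2},\c{BU(n)}{}]\twoheadrightarrow [\CP^{n+2},\c{Y}{}]. \]
This map is compatible with the projections to $\c{BU}{}$, since the tower maps over $BU$. So it restricts to a surjection from lifts of $\c{h}{}$ to $\c{BU(n)}{}$ onto lifts of $\c{h}{}$ to $\c{Y}{}$: a preimage of a lift to $Y$ must itself lie over $\c{h}{}$. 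One small check is that \Cref{thm:1mod4count-1}, which counts lifts to $Y$, applies equally to $\c{Y}{}$. It does, because $\CP^{n+2}$ is finite and the relevant fibers are $2$-complete Eilenberg--Mac Lane spaces, so the same coset computation holds after completion; equivalently, one runs the \Cref{lem:lift-2} argument with $\c{F}{}$ in place of $F$.

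For $c_2(h)$ even, \Cref{thm:1mod4count-1} gives exactly $2$ lifts to $\c{Y}{}$, and surjectivity then forces at least $2$ lifts to $\c{BU(n)}{}$.

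For $c_2(h)$ odd, surjectivity alone gives only a lower bound of $1$, so uniqueness needs a separate argument. The plan is to use the $2$-complete version of \Cref{lem1}: the set of lifts is a single orbit of $\c{(\pi_{2n+4}BU(n))}{}\cong\Z/4$, acting through the top cell, and under the map to $Y$ this action corresponds to the $\pi_{2n+4}$ action of the fiber. I would compare with the computation for $\tilde{Y}$, where the fiber over $X$ is $K(\Z/4,2n+4)$. The indeterminacy from varying $x$ is $c_2(h)x_0$ times the relevant generator. When $c_2(h)$ is odd, this hits a generator of $\Z/4$ itself, provided the $\Z/4$ lift $\tilde w$ is represented by $\Sq^4$-type terms plus $\iota_{2n+1}\otimes c_2$ in a way that makes the coefficient $c_2(h)$ a unit mod $4$. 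This $\Z/4$ refinement is the main obstacle, since it relies on the $\Z/4$-cohomology of $X$.

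A fallback avoids that computation. Restrict to orbits: by \Cref{lem1} the fibers of the surjection onto the single lift to $\c{Y}{}$ form one $\Z/4$-orbit, and the stabilizer contains the image of $\pi_1$ of the mapping space. The $\pi_1$ classes realising the nonzero second component $c_2(h)x_0$ in $\Z/2$ already move between the lifts lying over the $\Z/2$ quotient, which is what collapses the count to $1$.
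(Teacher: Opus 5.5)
Your treatment of the case $c_2(h)$ even is correct and is the paper's argument. \Cref{thm:1mod4count-1} gives exactly $2$ lifts to $Y$, and the surjection of \Cref{n1mod8-tildeL-enough}, which is compatible with the maps to $BU$, then forces at least $2$ lifts to the $2$-completion of $BU(n)$. Your remark that the count for $Y$ survives $2$-completion is a reasonable extra check.

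The case $c_2(h)$ odd has a gap, though a small one. Your main route stalls on what you call the main obstacle, namely the $\Z/4$-valued computation of $\pi_1\tilde w_*$. Your fallback does not supply the decisive step either. It asserts that the relevant $\pi_1$ classes ``move between the lifts'' (elements of a stabilizer do not move anything) and that this ``collapses the count to $1$'', without saying why. It also never checks that the class with nonzero $w$-component has vanishing $u$-component. That check is needed, because only classes in $\pi_1(X^{\CP^{n+2}},f)$ map to the $\Z/4$ group.

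The missing observation is that no $\Z/4$ information is needed at all. Since $\tilde w$ lifts $w$, composing
\[
\pi_1\tilde w_*\colon \pi_1(X^{\CP^{n+2}},f)\to H^{2n+4}(\CP^{n+2};\Z/4)\cong \Z/4
\]
with reduction $\Z/4\to\Z/2$ gives $\pi_1 w_*$. In the computation of \Cref{thm:1mod4count-1}, take $x_0=1$ and $y_0\equiv 1+c_1(h) \pmod 2$. This class has $u$-component zero, so by exactness it comes from $\pi_1(X^{\CP^{n+2}},f)$. Its $w$-component is $c_2(h)$, which is odd. Hence its image under $\pi_1\tilde w_*$ is an odd element of $\Z/4$, i.e.\ a generator, and $\op{coker}(\pi_1\tilde w_*)=0$. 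The lift $f$ of $h$ to $X$ is unique, because odd cohomology of $\CP^{n+2}$ vanishes and $y_0+x_0+c_1(h)x_0$ can be made nonzero. So \Cref{cor:coker-1} gives a unique lift to $\tilde Y$, hence a unique lift to the $2$-completion of $BU(n)$ by \Cref{n1mod8-tildeL-enough}.

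In your orbit language, the stabilizer is a subgroup of $\Z/4$ surjecting onto $\Z/2$, and the only such subgroup is $\Z/4$ itself. This is exactly the paper's argument. Your proviso that ``the coefficient $c_2(h)$ is a unit mod $4$'' is therefore automatic: any $\Z/4$-lift of an odd class is a unit. This holds even though the true $\Z/4$-value need not equal $c_2(h)x_0$.
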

\begin{proof} The fact that the number of lifts is at least $2$ when $c_2(h)\equiv 0\pmod{2}$ is immediate from \Cref{n1mod8-tildeL-enough} and \Cref{thm:1mod4count-1}. To see that we get only one lift when $c_2(h)$ is odd, fix a basepoint $f \in X^{\CP^{n+2}}$ lifting $h$. Such a lift exists since $n$ is odd, by \Cref{lem:exists-n-odd-c1-even}. Note that we have a homotopy commutative diagram as below:

\[\begin{tikzcd}\pi_1( X^{\CP^{n+2}},f) \ar[r,"\pi_1\tilde w_*"]\ar[dr,"\pi_1w_*\,\,\,\, " below] &\pi_1( K(\Z/4,2n+5)^{\CP^{n+2}},0) \ar[d] \ar[r,"\simeq"] &\Z/4 \ar[d,"q"]\\
 &\pi_1( K(\Z/2,2n+5)^{\CP^{n+2}},0) \ar[r,"\simeq"] & \Z/2, \end{tikzcd} \]
where $q$ is the quotient by $2\Z/4 \subset \Z/4$, and $X$ is as indicated in \Cref{diag:g1mod4}. Since $\pi_1w_*$ maps onto a generator for $\Z/2$ when $c_2(h)$ is odd, $\pi_1\tilde w_*$ must map onto a generator for $\Z/4$.
\end{proof}


%

\subsubsection{Reduction to $c_1\equiv 0 \pmod{4}$ and an alternate tower for $n\equiv 5\pmod{8}$}\label{n5mod8c1even}

Let $O(d)$ denote the line bundle on $\CP^{n+2}$ determined by $c_1(O(d))=d$. The next lemma is immediate from the fact that tensoring by $O(d)$ acts bijectively on $[\CP^{n+2},BU(n)]$, combined with basic facts about how tensoring by line bundles interacts with Chern classes.
\begin{lemma}\label{lb-trick-2} Let $h\: \CP^{n+2} \to BU$ have an unstable representative $V\: \CP^{n+2} \to BU(n)$. Let $h_d\: \CP^{n+2} \to BU$ denote the $K$-theory class of $V \otimes O(d)$. Then $\#\Vect^h_n(\CP^{n+2})=\#\Vect^{h_d}_n(\CP^{n+2})$. Moreover, $c_1(h_d)=c_1(h)+nd$ and $c_2(h_d)=c_2(h)+(n-1)c_1(h)d+\frac{n(n-1)}{2}d^2.$
\end{lemma}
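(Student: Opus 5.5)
The plan has two parts: first show that tensoring with $O(d)$ induces a bijection $\Vect^h_n(\CP^{n+2})\to\Vect^{h_d}_n(\CP^{n+2})$, and then compute the Chern classes using the splitting principle.

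\emph{Step 1: well-definedness.} The operation $W\mapsto W\otimes O(d)$ is defined on rank $n$ bundles. It is induced by a map $BU(n)\times BU(1)\to BU(n)$, so it preserves isomorphism classes. Its inverse is $W\mapsto W\otimes O(-d)$, since $O(d)\otimes O(-d)$ is trivial. Hence it is a bijection on $[\CP^{n+2},BU(n)]$.

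We must check that it sends $\Vect^h_n$ into $\Vect^{h_d}_n$. In $K$-theory, $[W\otimes O(d)]=[W]\cdot[O(d)]$. This shows that the $K$-theory class of $W\otimes O(d)$ depends only on the $K$-theory class of $W$, together with its rank $n$. Concretely, the reduced class is $(h+n)\cdot[O(d)]-n$, because $K$-theory is a ring and tensoring is multiplication. Therefore any $W$ with class $h$ maps to a bundle with class equal to the class of $V\otimes O(d)$, namely $h_d$. The same argument with $-d$ shows that the inverse sends $\Vect^{h_d}_n$ back into $\Vect^h_n$. This gives the equality of cardinalities.

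\emph{Step 2: Chern classes.} Write $x=c_1(O(d))=dt$ and use formal Chern roots $a_1,\dots,a_n$ of $V$. Then $V\otimes O(d)$ has Chern roots $a_i+x$, so
\[ c(V\otimes O(d))=\prod_{i=1}^n(1+a_i+x). \]
The degree-one part is $c_1+nx$. The degree-two part is
\[ e_2(a_i+x)=c_2+(n-1)c_1x+\binom n2x^2, \]
because each $a_i$ occurs in $n-1$ of the pairs and $x^2$ occurs $\binom n2$ times. Since Chern classes of $h$ and $h_d$ are those of their representatives, identifying $H^*(\CP^{n+2})$ with integers gives the stated formulas.

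There is no serious obstacle here. The only point needing care is Step 1: the $K$-theory class of $W\otimes O(d)$ is determined by $[W]$ alone, which follows from the ring structure of $K^0$.
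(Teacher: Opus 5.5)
Your proof is correct and takes the same approach as the paper. The paper simply cites that tensoring with $O(d)$ acts bijectively on $[\CP^{n+2},BU(n)]$, together with the standard behavior of Chern classes under twisting by a line bundle. You make explicit the details the paper leaves implicit: the inverse is tensoring with $O(-d)$, the identity $[W\otimes O(d)]=(h+n)[O(d)]$ shows the bijection restricts to $\Vect^h_n\to\Vect^{h_d}_n$, and the splitting-principle computation gives $c_1$ and $c_2$.
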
 
\begin{rmk}\label{lb-trick-2-rmk} Assuming $n\equiv 1 \pmod{4}$, we can always replace $h$ by another $K$-theory class $h_d$ where $c_1(h_d)\equiv 0\pmod{4}$. 
If $c_1(h)\equiv 1 \pmod{4}$ then we tensor by $O(3)$ and find that $c_1(h_3)\equiv 0 \pmod{4}$.
 If $c_1(h)\equiv 3\pmod{4}$, then we tensor by $O(1)$ and $c_1(h_1)\equiv 0\pmod{4}$. 
If $c_1(h)\equiv 2 \pmod{4}$, we tensor by $O(2)$ and find $c_1(h_2)\equiv 0 \pmod{4}$.
So, we can translate the computation to a similar one with the hypothesis that $c_1(h)\equiv 0\pmod{4}$.
\end{rmk}

\begin{rmk} The above indicates why there is no dependence on $c_1\pmod{4}$ in the case $n\equiv 1 \pmod{4}$: the value $c_1\pmod{4}$ is not invariant under certain constructions that preserve the count of interest.\end{rmk}

With this lemma in hand, we construct a tower of fibrations classifying vector bundles with even first Chern class. The tower of fibrations we use is summarized in \Cref{diag:g1mod4-2-2}. 
\begin{defn}\label{def-mod2-c1} Let $c_1(2)$ denote the composite $c_1 \pmod{2}\: BU \xrightarrow{c_1}K(\Z,2) \xrightarrow{q} K(\Z/2,2),$ where $q$ is induced by the surjective map $\Z \to \Z/2$. Let
\begin{align*} BU_2&=\fib\left(c_1(2)\:BU\to K(\Z/2,2)\right),\\ 
BU(n)_2&=\fib \left(c_1(2)\:BU(n)\to K(\Z/2,2)\right),
\quad \text{ and }\\
F_2&=\fib\left(c_{n+1}\times c_{n+2}\: BU_2 \to K(\Z,2n+2) \times K(\Z,2n+4)\right).
\end{align*}
\end{defn}

\begin{rmk}\label{coh-mod2-c1} Note that $BU(n)_2$ and $BU_2$ have Chern classes pulled back from those in the cohomology of $BU$. Moreover,
 \[\HZ{2}^*BU_2 \cong \Z/2[ \bar{c}_1,c_2, c_3,\ldots ],\quad \text{ while } \quad \HZ{2}^*BU(n)_2 \cong \Z/2[ \bar{c}_1,c_2, c_3,\ldots,c_n ].\] 
 In the above, $\bar{c}_1$ is detected by $\iota_1^2$ in the Serre spectral sequence for the fibration
\[K(\Z/2,1) \to BU_2 \to BU,\]
(here, $\iota_1$ generates the degree $1$ cohomology of the fiber). \end{rmk}
The relevant Steenrod square actions on $\HZ{2}^*BU_2$ are
\[
\Sq^2c_{n+1} = c_{n+2}, \quad \Sq^2c_{n+2} = 0, \quad \text{and} \quad \Sq^4c_{n+1} = c_2c_{n+1},
\]
by \Cref{prop:Chernform2} combined with the fact that $c_1\equiv 0 \pmod{2}$ for the universal bundle on $BU_2$. 
We first note the following lemma, following from the fact that $\CP^{n+2}$ has even cells:
\begin{lemma}\label{rmk:2ok} Given $h\:\CP^{n+2} \to BU$ with $c_1(h)$ even, $h$ lifts uniquely to $BU_2$. Similarly, for any $r$, given $V\: \CP^{n+2} \to BU(r)$ with $c_1(V)$ even, $V$ lifts uniquely to the fiber of $c_1\pmod{2}\: BU(r) \to K(\Z/2,2).$\end{lemma}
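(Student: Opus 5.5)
The plan is to study the lifting problem along the fibration $K(\Z/2,1)\to BU_2\to BU$ (and its analogue for $BU(r)$), where $BU_2=\fib(c_1(2))$. For existence: a lift of $h\:\CP^{n+2}\to BU$ to $BU_2$ exists if and only if the composite $c_1(2)\circ h\:\CP^{n+2}\to K(\Z/2,2)$ is null-homotopic. This composite is the class $c_1(h)\bmod 2\in H^2(\CP^{n+2};\Z/2)$, which vanishes by hypothesis.

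For uniqueness, I will apply \Cref{lem:lift-0} with $P_1=BU_2$, $P_0=BU$, $C=K(\Z/2,2)$ and $X=\CP^{n+2}$. Homotopy classes of lifts of $h$ form the orbit of a fixed lift under the action of
\[\pi_1(K(\Z/2,2)^{\CP^{n+2}},0)\cong H^1(\CP^{n+2};\Z/2).\]
Since $\CP^{n+2}$ has only even-dimensional cells, $H^1(\CP^{n+2};\Z/2)=0$. The acting group is therefore trivial, so the orbit, and hence the set of lifts, is a single point.

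The statement for $V\:\CP^{n+2}\to BU(r)$ is identical, using the fiber sequence
\[\fib\bigl(c_1(2)\:BU(r)\to K(\Z/2,2)\bigr)\to BU(r)\to K(\Z/2,2).\]
The existence obstruction is $c_1(V)\bmod 2=0$, and uniqueness again follows from $H^1(\CP^{n+2};\Z/2)=0$.

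One technical point is the passage between pointed and unpointed maps. All the spaces involved ($BU$, $BU(r)$, $BU_2$, $BU(r)_2$) are simply connected: $BU_2$ is simply connected because $\pi_2BU\to\pi_2K(\Z/2,2)$ is surjective and $\pi_1BU=0$. Hence free and pointed homotopy classes agree, so no basepoint issues arise.

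There is no real obstacle here; this is the most delicate point, and it is dispatched by the simple-connectivity observation above.
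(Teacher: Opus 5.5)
Your proposal is correct and is essentially the paper's argument. The paper only remarks that the lemma follows because $\CP^{n+2}$ has only even cells; you spell this out, using $c_1 \bmod 2 = 0$ for existence and, via \Cref{lem:lift-0}, the vanishing of the acting group $H^1(\CP^{n+2};\Z/2)$ for uniqueness.
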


\begin{rmk}
The previous lemma ensures that, when $c_1(h)$ is even, we may replace enumeration of lifts of a given map
\[
\CP^{n+2}\xrightarrow{h} BU
\]
along $BU(n)\to BU$ with enumeration of lifts of the unique lift
\[
\CP^{n+2}\xrightarrow{h} BU_2
\]
along $BU(n)_2\to BU_2$. The advantage of this replacement is that the simplified relation
$\Sq^2c_{n+1}=c_{n+2}$ in $\HZ{2}^*BU_2$ permits a delooping needed to get a useful action, resolving the issue depicted in \Cref{issue:1mod2} and discussed below that diagram.
\end{rmk}

To enumerate lifts of $h\:\CP^{n+2} \to BU_2$ to $BU(n)_2$, we first consider lifts to $F_2$. We form the Serre spectral sequence for the fibration
\[ K(\Z,2n+1) \times K(\Z,2n+3)\to F_2 \to BU_2,\]
where $F_2$ is the fiber of $c_{n+1} \times c_{n+2}$. 
 We find that
\begin{itemize}
 \item $d_{2n+4}(\Sq^2\iota_{2n+1}) = \Sq^2d_{2n+2}(\iota_{2n+1}) = \Sq^2c_{n+1} = c_{n+2}$,
 \item $d_{2n+5}(\Sq^3\iota_{2n+1}) = \Sq^3d_{2n+2}(\iota_{2n+1}) = \Sq^3c_{n+1} = 0$,
 \item $d_{2n+6}(\Sq^2\iota_{2n+3}) = \Sq^2d_{2n+4}(\iota_{2n+3}) = \Sq^2c_{n+2} = 0$, and
 \item $d_{2n+6}(\Sq^4\iota_{2n+1}) = \Sq^4d_{2n+2}(\iota_{2n+1}) = \Sq^4c_{n+1} = c_2c_{n+1}$.
\end{itemize}
It follows that 
\[
\Sq^2\iota_{2n+1}+\iota_{2n+3}, \quad \Sq^3\iota_{2n+1}, \quad \Sq^2\iota_{2n+3}, \quad \text{and} \quad \Sq^4\iota_{2n+1},
\]
are permanent cycles that detect classes $u,v,m,w$, respectively, in $\HZ{2}^{*}F_2$. These classes are the pullbacks of the classes of the same name in $\HZ{2}^*F$ defined in \Cref{n1mod8-start}, justifying the abuse of notation. They are represented in the double complex as follows:
\begin{align*}
u &=[\Sq^2\iota_{2n+1}\otimes 1  + \iota_{2n+3}\otimes 1] & v= [\Sq^3\iota_{2n+1}\otimes 1] \\
m&= [\Sq^2\iota_{2n+3}\otimes 1] & w=[\Sq^4\iota_{2n+1} \otimes 1  +\iota_{2n+1} \otimes c_2],
\end{align*} 
and, up to terms of lower filtration,
\[
 \Sq^1u=v,  \quad  \Sq^2u=m, \quad \Sq^1w=\Sq^2v, \quad \text{and} \quad \Sq^3u=\Sq^1m.
\]

We now build the next stage of the tower. Consider the map
\[
u \: F_2 \to K(\Z/2, 2n+3).
\]
Write $X_2$ for its homotopy fiber, which fits into \Cref{diag:g1mod4-00}.
\begin{equation}\label[diagram]{diag:g1mod4-00}
\begin{tikzcd}[column sep = large]
& X_2\ar[d] & &  \\
& F_2\ar[d] \ar[r,"u "] & K(\Z/2,2n+3)   \\
BU(n)_2\ar[r] \ar[ur, bend left = 5,dashed] \ar[uur, bend left = 20, dashed] & BU_2 \ar[r,"c_{n+1}\times c_{n+2}"] & K(\Z,2n+2)\times K(\Z, 2n+4).
\end{tikzcd} 
\end{equation}
Consider the Serre spectral sequence for $X_2$ as shown in \Cref{fig4-00}.

\begin{figure}[h]
\centering
\textbf{The $E_2$-page of the Serre spectral sequence computing $\HZ{2}^*X_2$}\par\medskip
\begin{tikzpicture}
\matrix (m) [matrix of math nodes,
nodes in empty cells,nodes={minimum width=2.5ex,
minimum height=2.5ex,outer sep=-1pt},
column sep=.25ex,row sep=.25ex]{
t&\\
2n+4 && \Sq^2\iota_{2n+2} &  & & & & & & & & & &&& \\
2n+3 && \Sq^1\iota_{2n+2} & & & & & & & & & & &&& \\
2n+2 && \iota_{2n+2} & &  & & & & & & & & &&& \\
\vdots\\
0&&1 & & & & \hdots &&  u &  & v & &m,w & \\
&& & & & & & & & & & & && \\
\quad\strut & & 0  & &  && \hdots& & 2n+3 & &2n+4& & 2n+5 & & & q \strut \\};
\draw[->] (m-4-3.south east) -- (m-6-9.north west);
\draw[->] (m-3-3.south east) -- (m-6-11.north west);
\draw[->] (m-2-3.south east) -- (m-6-13.north west);
\draw[thick] (m-1-2) -- (m-8-2) ;
\draw[thick] (m-7-1) -- (m-7-15) ;
\end{tikzpicture}
\caption{A schematic of the $E_2$-page for the $\Z/2$-cohomology Serre spectral sequence for the fiber sequence $K(\Z/2,2n+2) \to X_2 \to F_2$. Only key multiplicative generators are shown. Arrows indicate differentials.}\label{fig4-00}
\end{figure}

Note that the classes $u,v,m,\Sq^1m$ are all killed by transgressions of Steenrod squares on $\iota_{2n+2}$. We find that 
\[\HZ{2}^{*\leq 2n+6}X_2\cong \left( \Z/2[\bar{c}_1,c_2,\ldots,c_n]\otimes \Z/2\{1, w\} \right)^{*\leq 2n+6},\]
where $w$ is in degree $2n+5$ and is pulled back from the cohomology of $F_2$. Moreover, since $\pi_{2n+4}BU(n)\cong \Z/4$, by \Cref{lem:lift-pin} $w$ lifts to a $\Z/4$-cohomology class.
Let $\tilde{w}\: X_2 \to K(\Z/4,2n+5)$ be a lift of $w$ to $\Z/4$-cohomology. We define \[\tilde{Y}_2=\fib\left( \tilde{w}\: X_2 \to K(\Z/4,2n+5) \right).\] 
There is a map $BU(n)_2\to \tilde{Y}_2$ inducing an isomorphism
 \[\HZ{2}^{*\leq 2n+5}\tilde{Y}_2\cong \HZ{2}^{*\leq 2n+5}BU(n)_2.\] 
So, we conclude:
\begin{lemma}\label{n1mod8-tildeL-enough-2} If $n\equiv 5 \pmod{8}$, the natural map $BU(n)_2 \to \tilde{Y}_2$ induces a bijection \[[\CP^{n+2}, \c{(BU(n)_2)}{}]\cong [\CP^{n+2},\c{(\tilde{Y}_2)}{}].\]
\end{lemma}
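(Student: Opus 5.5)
The plan mirrors \Cref{n1mod8-tildeL-enough}: apply \Cref{lem:basic} to the $2$-completion of the map $g\: BU(n)_2 \to \tilde{Y}_2$. The map $g$ is built stage by stage. First lift $BU(n)_2\to BU_2$ to $F_2$, which is possible since $c_{n+1}$ and $c_{n+2}$ vanish on $BU(n)_2$. Next lift to $X_2$, since $u$ pulls back to zero in $\HZ{2}^{2n+3}BU(n)_2$. This vanishing holds because $\HZ{2}^{*}BU(n)_2$ is a quotient of $\Z/2[\bar c_1,c_2,\ldots]$ by $c_{n+1},c_{n+2},\ldots$. The map $F_2\to BU_2$ induces an isomorphism in degrees $\le 2n+1$, so the only classes in degree $2n+3$ come from $BU_2$; the lift can therefore be chosen so that $u$ pulls back to zero, after adjusting by an element of $\HZ{2}^{2n+1}$ if necessary.

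Finally, lift to $\tilde Y_2$. The obstruction is $g^*\tilde w\in H^{2n+5}(BU(n)_2;\Z/4)$, and this group has no odd-degree classes: $BU(n)_2$ has torsion-free, even-concentrated integral cohomology in this range, because $BU(n)_2\to BU(n)$ is a double-cover-type fibration with fiber $K(\Z/2,1)$, whose integral cohomology is $\Z[2c_1\text{-type},c_2,\dots]$. Hence the obstruction vanishes.

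Next I check the cohomology comparison. Using the stated computation $\HZ{2}^{*\le 2n+6}X_2\cong(\Z/2[\bar c_1,c_2,\dots,c_n]\otimes\Z/2\{1,w\})^{*\le 2n+6}$, I run the Serre spectral sequence for $K(\Z/4,2n+4)\to\tilde Y_2\to X_2$. In $\HZ{2}^*K(\Z/4,2n+4)$, the low classes are $\iota$ in degree $2n+4$, the higher Bockstein class $\beta_2\iota$ in degree $2n+5$, and $\Sq^2\iota$ in degree $2n+6$. The fundamental class $\iota$ transgresses to $\tilde w$ reduced mod $2$, which is $w$, killing $w$. The class $\beta_2\iota$ transgresses to the mod $2$ reduction of the second Bockstein of $\tilde w$, landing in degree $2n+6$, which only affects degrees $\ge 2n+6$. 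Thus $\HZ{2}^{*\le 2n+5}\tilde Y_2\cong \Z/2[\bar c_1,c_2,\ldots,c_n]^{*\le 2n+5}$, and $g^*$ matches this with $\HZ{2}^{*\le 2n+5}BU(n)_2$ (Remark \ref{coh-mod2-c1}).

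By \Cref{lem:basic} with $i=2n+5$, $\c{g}{}$ is an isomorphism on $\pi_{\le 2n+4}$, so $\tau_{\le 2n+4}\c{g}{}$ is an equivalence. Since $\CP^{n+2}$ is $(2n+4)$-skeletal, $[\CP^{n+2},\c{(BU(n)_2)}{}]\cong[\CP^{n+2},\c{(\tilde Y_2)}{}]$.

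The main obstacle is the degree-$(2n+5)$ surjectivity claim implicit in the stated isomorphism. One must ensure that $g^*$ does not send $\beta_2\iota$-related or $w$-related classes incorrectly, i.e.\ that $\pi_{2n+4}$ is really matched as $\Z/4$ rather than $\Z/2$. I would handle this by comparing homotopy directly. By construction, $\pi_{2n+4}\c{\tilde Y_2}{}\cong\Z/4$, while $\pi_{2n+4}\c{BU(n)_2}{}\cong\pi_{2n+4}\c{BU(n)}{}\cong\Z/4$. Lemma \ref{lem:basic} already gives a surjection in degree $2n+4$ from the isomorphism through degree $2n+5$, and a surjection between finite groups of equal order is a bijection. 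This also sidesteps any doubt about the top degree.
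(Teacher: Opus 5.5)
Your conclusion is right and the argument can be completed, but the spectral-sequence step in degree $2n+5$ is wrong as stated. The class $\beta_2\iota$ lies in $E_2^{0,2n+5}$, i.e.\ in total degree $2n+5$, so its transgression is exactly what decides $H^{2n+5}(\tilde Y_2;\Z/2)$; it does not "only affect degrees $\ge 2n+6$".

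The class $\beta_2\iota$ is transgressive, and its transgression is the Bockstein of $\tilde w$ for $0\to\Z/2\to\Z/8\to\Z/4\to 0$, a class in $H^{2n+6}(X_2;\Z/2)$. If that class vanished, $\beta_2\iota$ would survive and give a nonzero class in $H^{2n+5}(\tilde Y_2;\Z/2)$. Your proposal gives no reason it is nonzero, so the isomorphism through degree $2n+5$ is not established by your computation. What the spectral sequence does give is the isomorphism through degree $2n+4$: the only fiber class in that range is $\iota$, and it transgresses to $w\neq 0$.

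Your last paragraph is the right repair, but it must be run with $i=2n+4$ in \Cref{lem:basic}. As written, "from the isomorphism through degree $2n+5$" brings back the unproved claim and would make the counting pointless. With $i=2n+4$ you get isomorphisms on $\pi_{\le 2n+3}$ and a surjection on $\pi_{2n+4}$.

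After $2$-completion both groups are $\Z/4$. For $\tilde Y_2$ this needs $\pi_{2n+4}X_2=\pi_{2n+5}X_2=0$, which you should check rather than assert "by construction". It holds because $c_{n+2}$ is injective on $\pi_{2n+4}BU_2$ and $\pi_{2n+5}BU_2=0$. So the surjection is a bijection, $\tau_{\le 2n+4}$ of the completed map is an equivalence, and the bijection on $[\CP^{n+2},-]$ follows.

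Once corrected, your route differs from the paper's. The paper asserts the mod $2$ isomorphism through degree $2n+5$ and applies \Cref{lem:basic} with $i=2n+5$. That implicitly requires the nonvanishing of the Bockstein of $\tilde w$ above, which the paper does not verify.

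Your corrected argument avoids that computation by importing the order of $\pi_{2n+4}BU(n)$ at the prime $2$. This is the same device the paper uses for \Cref{n1mod8-tildeL-enough}. Moreover, since $\pi_{2n+5}\tilde Y_2=0$, the map is then $(2n+5)$-connected. That gives $H^{2n+5}(\tilde Y_2;\Z/2)\hookrightarrow H^{2n+5}(BU(n)_2;\Z/2)=0$, so the paper's stated degree-$(2n+5)$ isomorphism follows after the fact.
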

All in all, we obtain the tower of principal fibrations depicted in \Cref{diag:g1mod4-2-2}.
\begin{equation}\label[diagram]{diag:g1mod4-2-2}
\begin{tikzcd}[column sep = large]
&\tilde{Y}_2\ar[d]\\
& X_2\ar[d]\ar[r,"\tilde{w}"] & K(\Z/4,2n+5)  \\
& F_2\ar[d] \ar[r,"u "] & K(\Z/2,2n+3) \\
&BU_2 \ar[r,"c_{n+1} \times c_{n+2}"] &  K(\Z,2n+2)\times K(\Z, 2n+4).
\end{tikzcd} 
\end{equation}
\begin{lemma}\label{nothing-interesting} Suppose that $h\: \CP^{n+2} \to \c{(BU_2)}{}$ admits a lift to $\c{(BU(n)_2)}{}$. Among the homotopy classes of lifts of $h\: \CP^{n+2} \to \c{(BU_2)}{}$ to $X_2,$ exactly one homotopy class lifts further to $\c{(\tilde{Y}_2)}{}$.\end{lemma}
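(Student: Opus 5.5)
Fix a lift $f\colon \CP^{n+2}\to \c{(X_2)}{}$ of $h$ that lifts further to $\c{(\tilde Y_2)}{}$; one exists by hypothesis, since $h$ lifts to $\c{(BU(n)_2)}{}$ and $BU(n)_2\to \tilde Y_2\to X_2$. We must show that every other homotopy class of lift $f'\colon \CP^{n+2}\to \c{(X_2)}{}$ of $h$ fails to lift to $\c{(\tilde Y_2)}{}$. The obstruction to lifting $f'$ is $\tilde w\circ f'\in H^{2n+5}(\CP^{n+2};\Z/4)=0$, so every lift to $X_2$ does lift to $\tilde Y_2$. The statement therefore forces exactly one homotopy class of lift of $h$ to $X_2$, and the claim reduces to showing that lifts of $h$ from $\c{(BU_2)}{}$ to $\c{(X_2)}{}$ are unique.

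We prove uniqueness in two stages.

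\textbf{Lifts to $F_2$.} Lifts of $h$ to $F_2$ are a torsor for the cokernel of
\[
\pi_1(BU_2^{\CP^{n+2}},h)\to H^{2n+1}(\CP^{n+2};\Z)\times H^{2n+3}(\CP^{n+2};\Z),
\]
by \Cref{cor:coker-1} applied to $F_2\to BU_2\to K(\Z,2n+2)\times K(\Z,2n+4)$. The target vanishes because $\CP^{n+2}$ has only even cells, so $h$ lifts uniquely to $F_2$ (the lift exists because $h$ lifts to $BU(n)_2$).

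\textbf{Lifts from $F_2$ to $X_2$.} Fix the unique lift $g\colon \CP^{n+2}\to F_2$. Lifts of $g$ to $X_2$ are a torsor for the cokernel of
\[
\pi_1u_*\colon \pi_1(F_2^{\CP^{n+2}},g)\to H^{2n+3}(\Sigma\CP^{n+2};\Z/2)\cong \Z/2 .
\]
This is the place where working over $BU_2$ matters. In $F_2$ the class $u$ is represented by $\Sq^2\iota_{2n+1}\otimes 1+\iota_{2n+3}\otimes 1$, with no $\iota_{2n+1}\otimes c_1$ term, so the action computation of \Cref{lem:lift-2} goes through cleanly. We use the action $m$ of $K(\Z,2n+1)\times K(\Z,2n+3)$ on $F_2$. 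The hypothesis of \Cref{lem:lift-22} holds because $\pi_1(BU_2^{\CP^{n+2}},h)$ is $H^{\mathrm{odd}}$-type and hence zero. Then $m^*u=1\otimes u+\iota_{2n+3}\otimes 1+\Sq^2\iota_{2n+1}\otimes 1$. Following \Cref{rmk:practice}, the image of $\pi_1u_*$ is generated by $(y_0+nx_0)\iota_1t^{n+1}$ as $x_0,y_0$ vary over $\Z$. Taking $y_0=1$ and $x_0=0$ shows the image is all of $\Z/2$, so the cokernel is trivial and the lift to $X_2$ is unique.

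These steps, together with the vanishing of $H^{2n+5}(\CP^{n+2};\Z/4)$, give exactly one homotopy class of lift to $X_2$, and it lifts to $\c{(\tilde Y_2)}{}$. All spaces here are $2$-completed; this is harmless because $\CP^{n+2}$ is finite and all relevant groups are finite $2$-groups or $\Z$, and $\Z$ becomes $\Z_2$ after completion, where the same surjectivity holds.

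The main obstacle is the bookkeeping of the action in the second stage, namely justifying $m^*u$ on the $E_2$-page through the map of Serre spectral sequences induced by \Cref{map_fibs}, adapted to $F_2$. The term $\iota_{2n+3}\otimes 1$ already gives surjectivity, so the argument does not depend on $n\bmod 8$ or on $c_1(h)$.
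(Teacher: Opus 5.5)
Your proposal is correct, but it takes a different route from the paper.

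\textbf{The paper's route.} The paper does no computation in the tower for this lemma. It uses that restriction identifies $[\CP^{n+2},X_2]$ with $[\CP^{n+1},X_2]$. It then imports \Cref{lem1}: rank $n$ bundles with the same $K$-theory class agree on $\CP^{n+1}$. It transports this to $\tilde{Y}_2$ via \Cref{n1mod8-tildeL-enough-2} and \Cref{rmk:2ok}, concluding that all lifts of $h$ to $\tilde{Y}_2$ have the same image in $X_2$. Existence comes from the hypothesis.

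\textbf{Your route.} You first note that the last obstruction group $H^{2n+5}(\CP^{n+2};\Z/4)$ vanishes, so every lift to $X_2$ lifts further. The lemma is therefore equivalent to uniqueness of the lift of $h$ to $X_2$. You prove that uniqueness directly with the lifting calculus in two stages. At the $F_2$ stage, the vanishing of $H^{\mathrm{odd}}(\CP^{n+2};\Z)$ gives a unique lift. At the $X_2$ stage, the $\iota_{2n+3}$-summand of $u$ makes $\pi_1u_*$ surjective onto $H^{2n+2}(\CP^{n+2};\Z/2)$. This is the same phenomenon as the always-hit first coordinate in the computation before \Cref{thm:1mod4count-1}.

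\textbf{Comparison.} Your argument stays inside the tower and needs neither \Cref{lem1} nor the comparison with $BU(n)_2$. It also gives the stronger, clarifying fact that $h$ has only one lift to $X_2$ at all, so the ``among'' phrasing of the statement is somewhat redundant. The paper's argument is shorter given its earlier structural results and avoids cohomological bookkeeping entirely.

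\textbf{The obstacle you flagged.} The $E_2$-page bookkeeping is not really an obstacle for you, because you only use $x=0$.
\begin{itemize}
\item The component of $m^*u$ in $H^{2n+3}(K(\Z,2n+3);\Z/2)\otimes H^0(F_2;\Z/2)$ is determined by restricting to the fiber, where $u$ restricts to $\Sq^2\iota_{2n+1}+\iota_{2n+3}$.
\item The component $1\otimes u$ is determined by restricting to the basepoint times $F_2$.
\item Any other cross term in degree $2n+3$ has the form $\iota_{2n+1}\otimes\beta$ with $\beta\in H^2(F_2;\Z/2)$. This is because $\tilde{H}^{\leq 2n+2}(K(\Z,2n+1)\times K(\Z,2n+3);\Z/2)$ is spanned by $\iota_{2n+1}$. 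Such terms pull back to zero when $x=0$.
\end{itemize}
Hence the difference $(0,y,A)^*m^*u-(0,0,A)^*m^*u$ is exactly $y_0\iota_1t^{n+1}$, with no further justification needed.
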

\begin{proof}
Examining \Cref{diag:g1mod4-2-2}, we find that 
\begin{equation}\label{eq-new-nice}[\CP^{n+2},X_2]\cong [\CP^{n+1},X_2],\end{equation} induced by restriction along the inclusion of the $(2n+2)$-skeleton of $\CP^{n+2}$.  
Any two rank $n$ vector bundles on $\CP^{n+2}$ with the same $K$-theory class have the same restriction to $\CP^{n+1}$ (\Cref{lem1}).
 By \Cref{n1mod8-tildeL-enough-2} and \Cref{rmk:2ok}, any two maps $\CP^{n+2} \to \tilde{Y}_2$ lifting the same map to $BU_2$ have the same restriction to $\CP^{n+1}$. Using \Cref{eq-new-nice}, any two lifts of $h$ to $\tilde Y_2$ compose to the same homotopy class along $\tilde Y_2 \to X_2$.
\end{proof}
We have a homotopy commutative diagram as depicted in \Cref{diag:g1mod4-2-3}.
\begin{equation}\label[diagram]{diag:g1mod4-2-3}
\begin{tikzcd}[column sep = 5em,row sep=1.5em]
\fib\Sqp\ar[d] \ar[r]& X_2\ar[d]\ar[r] &*\ar[d] \\
K(\Z,2n+1)\times K(\Z,2n+3)\ar[d]\ar[r]& F_2\ar[d] \ar[r,"u "] & K(\Z/2,2n+3) \ar[from=ll,crossing over,bend right=8,"\Sq^2+\iota_{2n+3}" {below, near end}]  \\
* \ar[r]& BU_2 \ar[r,"c_{n+1}\times c_{n+2}" below] & K(\Z,2n+2)\times K(\Z,2n+4),
\end{tikzcd} 
\end{equation}
where we defined $\fib\Sqp$ as the fiber of \[\Sqp:=\Sq^2+\iota_{2n+3}:K(\Z,2n+1) \times K(\Z,2n+3) \to K(\Z/2,2n+3).\] In \Cref{diag:g1mod4-2-3}, it is immediate that the upper right-hand square is a pullback, as is the outer upper horizontal rectangle. Therefore, the upper left-hand square is also a pullback. Since $K(\Z,2n+1) \times K(\Z,2n+3) \to F_2 \to BU_2$ is also a fiber sequence, the outer left-hand vertical rectangle is a pullback as well, and we obtain a fiber sequence 
\[ \fib\Sqp \to X_2 \to BU_2.\]
Moreover, we will show this fiber sequence deloops. Consider \Cref{eq:fib-tildeG-BU3-2-0} below.

\begin{equation}\label[diagram]{eq:fib-tildeG-BU3-2-0}
\begin{tikzcd}
\fib\Sqp\ar[d] \ar[r] &K(\Z,2n+1) \times K(\Z,2n+3)\ar[d]\\
X_2\ar[d] \ar[r] & F_2\ar[d] \\
BU_2 \ar[d,dashed] \ar[r] & BU_2\ar[d,"c_{n+1}\times c_{n+2} "] \\
\fib_{2n+2}(\Sq^2+\iota_{2n+4})\ar[r]& K(\Z,2n+2)\times K(\Z,2n+4) \ar[r,"\Sqp" below] & K(\Z/2,2n+4).
\end{tikzcd}
\end{equation}
The dashed arrow exists since $\Sq^2c_{n+1}=c_{n+2}$ in the cohomology of $BU_2$.
Thus, we get an action 
\[\tilde m\: \fib\Sqp \times X_2 \to X_2,\]
which is transitive after applying $\pi_1((-)^{\CP^{n+2}},\star)$ by \Cref{lem:lift-22}.

Combining the previous two observations with \Cref{cor:coker-1} and \Cref{lem:lift-2}, we deduce:
\begin{cor}\label{cor:computation-needed}Let $h\: \CP^{n+2} \to BU_2 \to \c{(BU_2)}{}$  admit a lift to $\c{(BU(n)_2)}{}$. Let $\tilde h$ be any lift of $h$ to $X_2$ that lifts further to $\tilde{Y}_2$. Then lifts of $h$ to $\c{(BU(n)_2)}{}$ are in bijection with the quotient of $\pi_1(K(\Z/4,2n+5)^{\CP^{n+2}},0)$ by the subgroup generated by elements of the form 
\[  (x,A)^* (\tilde m^* \tilde w)- (0,A)^*(\tilde m^* \tilde w)\]
where $A\in \pi_1(X_2^{\CP^{n+2}},\tilde{h})$ is fixed, and $x$ ranges over $[\Sigma \CP^{n+2},\fib \Sqp]$.
\end{cor}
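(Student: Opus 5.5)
Most of the work is already done by the preceding results, so the plan is to assemble them in the order set up by \Cref{diag:g1mod4-2-2}.

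\textbf{Step 1: reduce to lifts along $\tilde{Y}_2\to X_2$.} By \Cref{rmk:2ok}, $h$ lifts uniquely to $\c{(BU_2)}{}$. By \Cref{n1mod8-tildeL-enough-2}, lifts of $h$ to $\c{(BU(n)_2)}{}$ are in bijection with lifts of $h$ to $\c{(\tilde{Y}_2)}{}$. Every such lift projects to some lift of $h$ to $X_2$. By \Cref{nothing-interesting}, all of these projections land in a single homotopy class, namely the class of $\tilde h$. Hence the set we want is exactly the set of homotopy classes of lifts of the fixed map $\tilde h\colon \CP^{n+2}\to X_2$ along the principal fibration $\tilde{Y}_2\to X_2\xrightarrow{\tilde w}K(\Z/4,2n+5)$.

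\textbf{Step 2: identify these lifts with a cokernel.} The target group $\pi_1(K(\Z/4,2n+5)^{\CP^{n+2}},0)\cong H^{2n+4}(\CP^{n+2};\Z/4)\cong\Z/4$ is abelian. So \Cref{cor:coker-1}, applied with $P_1=\tilde Y_2$, $P_0=X_2$ and $w=\tilde w$, identifies these lifts with $\operatorname{coker}(\pi_1\tilde w_*)$, where $\pi_1\tilde w_*$ starts from $\pi_1(X_2^{\CP^{n+2}},\tilde h)$.

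\textbf{Step 3: describe the image of $\pi_1\tilde w_*$ via the delooped action.} This needs \Cref{lem:lift-22} and \Cref{lem:lift-2}, applied to the fiber sequence
\[X_2\to BU_2\to \fib_{2n+2}(\Sq^2+\iota_{2n+4})\]
from \Cref{eq:fib-tildeG-BU3-2-0}. Its loop space is $\fib\Sqp$, and the associated action is $\tilde m$.

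\emph{Hypothesis to check.} The only hypothesis to verify is that $\pi_1(X_2^{\CP^{n+2}},\tilde h)\to\pi_1(BU_2^{\CP^{n+2}},h)$ is zero. I would argue this as follows:
\begin{itemize}
\item $BU_2\to BU$ is the fiber of $c_1(2)$, so $\pi_1(BU_2^{\CP^{n+2}},h)$ sits in an exact sequence between $\pi_1(BU^{\CP^{n+2}})$ and the image of $\pi_2(K(\Z/2,2)^{\CP^{n+2}})$.
\item $\pi_1(BU^{\CP^{n+2}})=\widetilde{K}^{-1}(\CP^{n+2})=0$.
\item $\pi_2(K(\Z/2,2)^{\CP^{n+2}})=H^0(\CP^{n+2};\Z/2)$ in the pointed setting, and this group vanishes on reduced cohomology.
\end{itemize}
Hence the mapping-space group is zero outright, and the hypothesis holds trivially, as in the remark after \Cref{lem:lift-22}.

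\emph{Conclusion of Step 3.} With $K(\Z/4,2n+5)$ an Eilenberg--Mac Lane space, \Cref{lem:lift-2} and \Cref{rmk:practice} then say that the image of $\pi_1\tilde w_*$ is the set of classes $(x,A)^*\tilde m^*\tilde w-(0,A)^*\tilde m^*\tilde w$, for a fixed $A$ and $x$ ranging over $[\Sigma\CP^{n+2},\fib\Sqp]$. The quotient by the subgroup these generate is the stated description.

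\textbf{Main obstacle.} The delicate point is the bookkeeping of $2$-completions. One must make sure the $2$-complete fiber sequences still deloop, and that \Cref{cor:coker-1} applies with $\c{(X_2)}{}$ in place of $X_2$. Since $\CP^{n+2}$ is finite and all fibers are Eilenberg--Mac Lane spaces of finite $2$-groups, or integral ones that become $\Z_2$-valued after completion, I expect mapping into the completion to change nothing in the relevant $\pi_0$ and $\pi_1$ computations.
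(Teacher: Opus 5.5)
Your proposal is correct and follows essentially the paper's route. You use \Cref{n1mod8-tildeL-enough-2} and \Cref{nothing-interesting} to pin down the single relevant lift $\tilde h$ to $X_2$, then \Cref{cor:coker-1} to turn lifts along $\tilde Y_2\to X_2$ into a cokernel, and finally the delooped sequence $X_2\to BU_2\to \fib_{2n+2}(\Sq^2+\iota_{2n+4})$ with \Cref{lem:lift-22} and \Cref{lem:lift-2} to compute the image via $\tilde m$. Your explicit check that $\pi_1(BU_2^{\CP^{n+2}},h)=0$, using $\widetilde{K}^{-1}(\CP^{n+2})=0$ and $\widetilde{H}^0(\CP^{n+2};\Z/2)=0$, supplies a hypothesis the paper uses only implicitly when it asserts transitivity of the action.
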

To actually apply \Cref{cor:computation-needed}, we need some $\Z/4$-cohomology computations.
These are in \Cref{app:Z4-cohomology}.

\subsubsection{The action $\tilde m$ when $n\equiv 5 \pmod{8}$}\label{n5mod8-action1}
We have a commutative diagram relating various actions on relevant cohomology classes.
\[\begin{tikzcd}[row sep=.7em]
\fib\Sqp \times X_2 \ar[r,"\tilde m"] \ar[d,"="]& X_2\ar[d,"="] \ar[r,"\tilde w"] &K(\Z/4,2n+5)\ar[d,"q"]\\
\fib\Sqp \times X_2 \ar[r,"\tilde m"] \ar[d]& X_2 \ar[r," w"] \ar[d]&K(\Z/2,2n+5)\ar[d,"="]\\
K(\Z,2n+1)\times K(\Z,2n+3) \times F_2 \ar[r," m"] & F_2 \ar[r," w"] &K(\Z/2,2n+5),
\end{tikzcd}
\]
from which it follows that $\tilde m^* \tilde w$ is a $\Z/4$-cohomology lift of the $\Z/2$-cohomology class $m^*w$. 
Consider the diagram of cohomology coefficients:
\begin{equation}\label[diagram]{vary-coeffs}0\to\Z/2\xrightarrow{i} \Z/4 \xrightarrow{p} \Z/2\to 0,\end{equation} 
which, for any $X$, gives an induced map on cohomology 
\begin{equation}\label{ip}\HZ{2}^*X \xrightarrow{i_*} \HZ{4}^*X\xrightarrow{p_*}\HZ{2}^*X\end{equation} fitting into a long exact sequence whose connecting homomorphism is, by definition, $\Sq^1$. 
With notation as in \Cref{coh:Z2-Sqp},
\[p_*\tilde m^* \tilde w =1 \otimes w+ \Sq^4\kappa_{2n+1} \otimes 1 + \kappa_{2n+1} \otimes c_2.\]

Consider now the K\"{u}nneth map
\[ j\:\HZ{4}^{*}\fib\Sqp \otimes \HZ{4}^*X_2\to\HZ{4}^{*}(\fib\Sqp \times X_2).\] 
\begin{rmk}\label{strat:no-kunneth} We would like to compute $\tilde m^* \tilde w$ in the tensor product of $\Z/4$-cohomology groups, but the K\"{u}nneth spectral sequence for the product $\fib\Sqp\times X_2$ does not collapse.
Our next few steps circumvent this: first, we write down a candidate for $\tilde m^* \tilde w$ in the tensor product of the cohomologies, whose image under $j$ is a good approximation to $\tilde m^* \tilde w$; second, we show how to compute with this better-behaved class instead. \end{rmk}

We define \[R:= 1\otimes \tilde{w}+\tilde{\Sq}^4 \tilde \kappa_{2n+1} \otimes  1 + \tilde \kappa_{2n+1}\otimes c_2 \] in $ \HZ{4}^{*}\fib\Sqp \otimes \HZ{4}^* X_2,$ where $\tilde{\Sq}^4 \tilde \kappa_{2n+1}$ is a $\Z/4$-cohomology lift of $\Sq^4\kappa_{2n+1}$ (see \Cref{coh:Z2-Sqp-1}).
Note that $jR$ is a lift of $\tilde m^*w$ to $\Z/4$-cohomology. 
\begin{lemma}\label{lem-A-not-tildeW-0} $\tilde m^* \tilde w -jR$ is in the image of the change-of-coefficient map 
\[i_*\: \HZ{2}^{2n+5}(\fib\Sqp \times X_2) \to \HZ{4}^{2n+5}(\fib\Sqp\times X_2).  \]

\end{lemma}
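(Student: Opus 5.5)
The plan is to use exactness of the Bockstein long exact sequence attached to \Cref{vary-coeffs}. For any space $Z$, the sequence
\[
\HZ{2}^{2n+5}Z \xrightarrow{i_*} \HZ{4}^{2n+5}Z \xrightarrow{p_*} \HZ{2}^{2n+5}Z
\]
is exact, so an element of $\HZ{4}^{2n+5}Z$ lies in the image of $i_*$ if and only if its image under $p_*$ vanishes. We take $Z=\fib\Sqp\times X_2$ and apply this criterion to $\tilde m^*\tilde w - jR$. By linearity of $p_*$, it suffices to prove
\[
p_*(\tilde m^*\tilde w) = p_*(jR)
\]
in $\HZ{2}^{2n+5}(\fib\Sqp\times X_2)$.

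\emph{Left-hand side.} The commutative diagram at the start of \Cref{n5mod8-action1} gives $q\circ\tilde w = w$ on $X_2$, where $q$ is reduction mod $2$. Hence
\[
p_*\tilde m^*\tilde w = \tilde m^* p_*\tilde w = \tilde m^* w .
\]
This is the class recorded just before the statement, namely the image under the mod $2$ K\"unneth map of
\[
1\otimes w + \Sq^4\kappa_{2n+1}\otimes 1 + \kappa_{2n+1}\otimes c_2 .
\]
Its justification is the map of Serre spectral sequences induced by the lower square of that diagram, comparing $\tilde m$ with $m$, together with the formula for $m^*w$ from \Cref{n1mod8-start}.

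\emph{Right-hand side.} Reduction of coefficients commutes with the K\"unneth map $j$, because $j$ is given by cross products and cross products are natural in the coefficient ring map $\Z/4\to\Z/2$. Therefore $p_*(jR)$ is the mod $2$ cross product of the tensorwise reductions of the three summands of $R$:
\begin{itemize}
\item $p_*\tilde w = w$, since $\tilde w$ lifts $w$;
\item $p_*\tilde\kappa_{2n+1} = \kappa_{2n+1}$, where $\tilde\kappa_{2n+1}$ is the $\Z/4$-lift fixed in \Cref{coh:Z2-Sqp};
\item $p_*\tilde{\Sq}^4\tilde\kappa_{2n+1} = \Sq^4\kappa_{2n+1}$, by the choice made in \Cref{coh:Z2-Sqp-1};
\item the class $c_2$ in $\Z/4$-cohomology reduces to the mod $2$ class $c_2$, since both are pulled back from the integral Chern class.
\end{itemize}
So $p_*(jR)$ equals the same expression as $p_*(\tilde m^*\tilde w)$, their difference lies in $\ker p_* = \operatorname{im} i_*$, and the lemma follows.

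\emph{Main obstacle.} The delicate step is the left-hand side: showing that $\tilde m^* w$ is \emph{exactly}, and not merely up to lower filtration, the K\"unneth image of $1\otimes w + \Sq^4\kappa_{2n+1}\otimes1+\kappa_{2n+1}\otimes c_2$. This is the sentence ``$jR$ is a lift of $\tilde m^* w$'' asserted in the text, which I would take as given. If I had to justify it myself, I would argue that in degree $2n+5$ the mod $2$ K\"unneth map is an isomorphism, since $\Z/2$ is a field. I would then argue that the possible lower-filtration ambiguities are pinned down by restricting to $\fib\Sqp\times *$ and to $*\times X_2$, and by evaluating on the cross terms, using the low-degree description of $\HZ{2}^*\fib\Sqp$ from \Cref{app:Z4-cohomology}.
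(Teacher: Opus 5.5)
Your proposal is correct and follows the paper's own proof: the paper also uses exactness of $\HZ{2}^{2n+5}\xrightarrow{i_*}\HZ{4}^{2n+5}\xrightarrow{p_*}\HZ{2}^{2n+5}$ together with $p_*\tilde m^*\tilde w = p_*jR = \tilde m^*w$. Like you, it takes the formula for $\tilde m^*w$ from the Serre spectral sequence comparison as given. One small slip: $\tilde\kappa_{2n+1}$ is introduced in \Cref{coh:Z2-Sqp-1}, not \Cref{coh:Z2-Sqp}.
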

\begin{proof} This follows from the fact that $p_*\tilde m^* \tilde w = p_*jR= \tilde m^*w.$
\end{proof}

Thus, $ \tilde m^*\tilde w-jR= i_*Z $
where $Z \in \HZ{2}^*\fib\Sqp \otimes \HZ{2}^*X_2\cong \HZ{2}^*(\fib\Sqp \times X_2).$
Considering the $\Z/2$-cohomology of $\fib\Sqp$ and $ X_2$ and applying the K\"{u}nneth isomorphism for $\Z/2$-cohomology, we can write:
\begin{equation}\label{guess-z-0}Z = 1\otimes Z_0+\epsilon_0\kappa_{2n+1} \otimes \bar{c}_1^2 +\epsilon_1\kappa_{2n+1} \otimes c_2+\epsilon_2\Sq^2\kappa_{2n+1} \otimes \bar{c}_1+ \epsilon_3\Sq^4 \kappa_{2n+1} \otimes 1, \end{equation}
for some choices of $Z_0 \in \HZ{2}^* X_2$ and $\epsilon_i \in \Z/2$ (we refer to \Cref{coh:Z2-Sqp-1} for notation).

\begin{rmk} Examining \Cref{guess-z-0}, we see that at least one factor in each summand lifts to a $\Z/4$-cohomology class. This implies that $\tilde m^*\tilde w -jR$ is in fact in the image of the map
\[ j\: \HZ{4}^*\fib\Sqp\otimes \HZ{4}^*  X_2 \to \HZ{4}^*(\fib\Sqp\times  X_2).\]
\end{rmk}

\begin{thm}\label{thm:n5mod8-2local} Suppose that $n\equiv 5 \pmod{8}$ and $h\: \CP^{n+2} \to BU \to  \c{BU}{}$ is given with $c_{n+1}(h)=c_{n+2}(h)=0$. The number of homotopy classes of lifts of $h$ to $\c{BU(n)}{}$ is equal to:
\begin{itemize}
\item $4$ if $c_1(h)$ is even and $c_2(h)$ is divisible by $4$, or if $c_1(h)$ is odd and $c_2(h)\equiv 2 \pmod{4}$;
\item $2$ if $c_1(h)$ is odd and $c_2(h)\equiv 0 \pmod{4}$, or if $c_1(h)$ is even and $c_2(h)\equiv 2\pmod{4}$; and
\item $1$ if $c_2(h)$ is odd.
\end{itemize}
\end{thm}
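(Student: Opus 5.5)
The plan is to first reduce to $c_1(h)\equiv 0\pmod 4$, and then compute the cokernel of \Cref{cor:computation-needed} directly in $\Z/4$.

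\textbf{Reduction.} By \Cref{lb-trick-2} and \Cref{rmk:lb-trick-2-rmk} (i.e.\ \Cref{lb-trick-2-rmk}), tensoring with $O(d)$ preserves $\#\Vect^h_n$, and we may assume $c_1(h)\equiv 0\pmod 4$. I first check that the three cases of the statement are permuted correctly under this reduction. For $n\equiv 5\pmod 8$ we have $n-1\equiv 4\pmod 8$ and $\frac{n(n-1)}{2}\equiv 2\pmod 4$. Hence $c_2(h_d)\equiv c_2(h)+2d^2\pmod 4$, while $c_1(h_d)\equiv c_1(h)+d\pmod 2$.
\begin{itemize}
\item Tensoring by an odd $d$ flips the parity of $c_1$ and shifts $c_2$ by $2$ modulo $4$. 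So ($c_1$ odd, $c_2\equiv 2$) corresponds to ($c_1$ even, $c_2\equiv 0$), and similarly for the other pairing.
\item Tensoring by $O(2)$ fixes $c_2$ modulo $4$.
\end{itemize}
Thus it suffices to show: when $c_1(h)\equiv 0\pmod 4$, the number of $2$-complete lifts is $\gcd(4,c_2(h))$. The case $c_2(h)$ odd is already \Cref{cor:5mod4bound}. By \Cref{rmk:2ok}, we then pass to the tower \Cref{diag:g1mod4-2-2} over $BU_2$.

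\textbf{Computing the cokernel.} By \Cref{nothing-interesting} and \Cref{cor:computation-needed}, the count is the order of the quotient of $\HZ{4}^{2n+5}\Sigma\CP^{n+2}\cong\Z/4$ by the classes $(x,A)^*\tilde m^*\tilde w-(0,A)^*\tilde m^*\tilde w$, as $x$ ranges over $[\Sigma\CP^{n+2},\fib\Sqp]$. I will write
\[\tilde m^*\tilde w=jR+i_*Z,\]
with $Z$ as in \Cref{guess-z-0}, and evaluate each summand on $(x,A)$.
\begin{itemize}
\item The summands $1\otimes\tilde w$ and $1\otimes Z_0$ cancel in the difference.
\item Since $c_1(h)\equiv 0\pmod 4$, the class $\bar c_1$ pulls back to zero on $\CP^{n+2}$, so the $\epsilon_0$ and $\epsilon_2$ terms vanish.
\item The map $x$ is determined on the relevant cohomology by $\kappa_{2n+1}\mapsto x_0\iota_1t^n$ (an integer $x_0$). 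Using the $\Z/4$-cohomology of $\fib\Sqp$ from \Cref{app:Z4-cohomology}, the term $\tilde\kappa_{2n+1}\otimes c_2$ contributes $c_2(h)x_0$.
\item The term $\tilde{\Sq}^4\tilde\kappa_{2n+1}\otimes 1$ contributes $\lambda x_0$ for a universal constant $\lambda\in\Z/4$. Its reduction modulo $2$ is $\binom n2\equiv 0$, so $\lambda\in\{0,2\}$.
\item The $i_*Z$ terms with $\epsilon_1,\epsilon_3$ contribute $2\epsilon_1c_2(h)x_0+2\epsilon_3\binom n2x_0$. The second of these is $0$ in $\Z/4$.
\end{itemize}
Altogether, the image is the subgroup generated by $\bigl((1+2\epsilon_1)c_2(h)+\lambda\bigr)x_0$.

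\textbf{Main obstacle and calibration.} The main obstacle is pinning down the universal constants $\lambda$ and $\epsilon_1$, because the Künneth map for $\Z/4$ is not an isomorphism (\Cref{strat:no-kunneth}). I would avoid computing them directly and calibrate with known answers instead.
\begin{itemize}
\item For $h=0$, \Cref{thm:yang-main} gives $\#\Vect^0_n(\CP^{n+2})=4$ for $n\equiv 5,13\pmod{24}$; these are $2$-parts since $3\nmid n$ (\Cref{main3count}). Hence the image must be trivial, which forces $\lambda=0$.
\item Since $1+2\epsilon_1$ is a unit in $\Z/4$, the image is then $c_2(h)\cdot\Z/4$, and the cokernel has order $\gcd(4,c_2(h))$, independently of $\epsilon_1$.
\end{itemize}
The remaining care is to verify that $\lambda$ and $\epsilon_1$ are genuinely independent of $h$. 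This holds because they are defined universally, on $\fib\Sqp\times X_2$. It is also important that the chosen basepoint lift $\tilde h$ lifts to $\tilde Y_2$, which \Cref{nothing-interesting} guarantees.
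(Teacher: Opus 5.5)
Your proposal is essentially the paper's proof: you reduce to $c_1(h)\equiv 0\pmod 4$ via \Cref{lb-trick-2}, doing that bookkeeping up front rather than at the end, expand $\tilde m^*\tilde w=jR+i_*Z$ term by term, and calibrate against \Cref{thm:yang-main} at $h=0$ to kill the unknown $\tilde{\Sq}^4$ contribution. One small slip is in that calibration: $n\equiv 5\pmod 8$ also includes $n\equiv 21\pmod{24}$, where $3\mid n$ and $\#\Vect^0_n(\CP^{n+2})=12$, so there you must combine \Cref{main3count} (which gives $3$-part $3$ at $h=0$) with \Cref{cor:product23} to see that the $2$-part is still $4$.
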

\begin{proof} We have already dealt with the case in which $c_2(h)$ is odd in \Cref{cor:5mod4bound}, so we assume $c_2(h)$ is even. Additionally, we first assume $c_1(h) \equiv 0\pmod{4}$. Otherwise, we appeal to \Cref{lb-trick-2-rmk} to reduce to this case.

Let a lift $\tilde{h}\: \CP^{n+2} \to  X_2$ be given, and write $y\: S^1\times \CP^{n+2} \to  X_2$ for $0\times \tilde{h}$, which represents a fixed element in $\pi_1(X_2^{\CP^{n+2}},\tilde{h})$. 
Applying \Cref{cor:computation-needed}, we wish to compute
\begin{equation}\label{eq:diff-0}
(0,y)^*(\tilde m ^* \tilde w) - (x,y)^*(\tilde m ^* \tilde w)=(0,y)^*(jR+i_*Z)-(x,y)^*(jR+i_*Z)
\end{equation} 
as $x$ ranges over $[\Sigma\CP^{n+2},\fib\Sqp]$. 
We need to compute the operation 
\[\Sqp=\Sq^2+\iota_{2n+3}\: H_{\Z}^{2n+1}\Sigma\CP^{n+2} \times H_{\Z}^{2n+3}\Sigma\CP^{n+2} \to \HZ{2}^{2n+3}\Sigma\CP^{n+2} .\]
Write $\iota_1t^i$ for a degree $2i+1$ generator of $H_{\Z}^*\Sigma \CP^{n+2}$. Since $n$ is odd, 
\[(\Sq^2+\iota_{2n+3})(a_0\iota_1t^{n}+b_0\iota_1t^{n+1})=(a_0+b_0)\iota_1t^{n+1}\pmod{2}.\]
We can therefore identify $[\Sigma\CP^{n+2},\fib\Sqp]$ with cohomology classes $a_0\iota_1t^{n}+b_0\iota_1t^{n+1}$ where $a_0+b_0\equiv 0\pmod{2}$. We define
\[a=a_0\iota_1t^{n}, \quad b=b_0\iota_1t^{n+1}.\]

We break the computation of \Cref{eq:diff-0} into two steps. First, consider the diagram:
\begin{equation}\label[diagram]{ijstuff-0}
\begin{tikzcd}
\Sigma \CP^{n+2} \ar[r,bend right,"{(a,b,y)}" below] \ar[r,bend left,"{(0,0,y)}" above] &
{ \fib\Sqp \times X_2}\ar[r,"Z"] &
 K(\Z/2,2n+5) \ar[r,"i"]&
 K(\Z/4,2n+5).\end{tikzcd}
\end{equation}
 Referring to \Cref{guess-z-0}, we find that:
\begin{align*} (0,0,y)^*Z- (a,b,y)^*Z
&=   1\otimes Z_0-1 \otimes Z_0-\epsilon_0(a,b)^*\kappa_{2n+1} \otimes y^*\bar{c}_1^2 -\epsilon_1(a,b)^*\kappa_{2n+1} \otimes y^*c_2\\
& \quad \quad -  \epsilon_2(a,b)^*\Sq^2\kappa_{2n+1} \otimes y^*\bar{c}_1-\epsilon_3(a,b)^*\Sq^4 \kappa_{2n+1} \otimes 1\\
&=a_0\left( \epsilon_0\bar c_1(h)^2 + \epsilon_1 c_2(h)+\epsilon_2 \bar c_1(h)\right)\iota_1t^{n+2},
\end{align*}
where we use that $\Sq^4t^n=0$ if $n\equiv 1 \pmod{4}$. Moreover, $c_1(h)\equiv 0\pmod{4}$ implies that  $\bar{c}_1(h)=0\pmod{2}$. The above reduces to:
\begin{align*} (0,0,y)^*Z- (a,b,y)^*Z &=a_0\epsilon_1c_2(h)\iota_1t^{n+2},
\end{align*}
and
\begin{align*} (0,0,y)^*i_*Z- (a,b,y)^*i_*Z &=2a_0\epsilon_1c_2(h)\iota_1t^{n+2}.
\end{align*}
Next we study $jR.$ 
\begin{align*}(0,0,y)^*(jR) - (a,b,y)^*(jR)&=a_0\iota_1\tilde{Sq}^4t^n+ a_0c_2(h)\iota_1t^{n+2}.\end{align*}
Since $\tilde m^* \tilde w= jR+i_*Z,$ we deduce:
\begin{align*}(0,0,y)^*(\tilde m ^* \tilde w) - (a,b,y)^*(\tilde m ^* \tilde w)&=a_0\iota_1\tilde{Sq}^4t^n + a_0 c_2(h)\iota_1t^{n+2} + 2a_0\epsilon_1c_2(h)\iota_1t^{n+2}.\end{align*}
When $h=0$, Hu's enumeration \Cref{thm:yang-main} shows that the cokernel is $\Z/4$, forcing $\tilde{Sq}^4t^n=0$. This shows that the cokernel is $\Z/4$ whenever $c_2(h)\equiv 0 \pmod{4}$.
If $c_2(h) \equiv 2 \pmod{4}$, 
$(0,0,y)^*(\tilde m ^* \tilde w) - (a,b,y)^*(\tilde m ^* \tilde w)$ has image $2\Z/4$ as $a,b$ vary. So the cokernel is $\Z/2$. If $c_2(h)$ is odd, the cokernel is zero.

Lastly, we do some bookkeeping for different values of $c_1(h)\pmod{4}$. By the $2$-complete version of
\Cref{lb-trick-2} and \Cref{lb-trick-2-rmk}:
\begin{itemize}
\item Case 1: $c_1(h)\equiv 1 \pmod{4}$. In this case, $\#\c{\Vect^h_n(\CP^{n+2})}{}=\# \c{\Vect^{h_3}_{n}(\CP^{n+2})}{},$ where $c_1(h_3)\equiv 0 \pmod{4}$ and $c_2(h_3)\equiv c_2(h)+\frac{n(n-1)}{2}\pmod{4}.$ In particular, if $c_1(h)\equiv 1 \pmod{4}$ and $n\equiv 5\pmod{8}$, then $c_2(h_3)\equiv c_2(h)+2\pmod{4}$.
\item Case 2: $c_1(h)\equiv 2\pmod{4}$. In this case, $\#\c{\Vect^h_n(\CP^{n+2})}{}=\#\c{\Vect^{h_2}_{n}(\CP^{n+2})}{},$ where $c_1(h_2)\equiv 0 \pmod{4}$ and $c_2(h_2)\equiv c_2(h)+4\frac{n(n-1)}{2}\pmod{4}.$ In particular, if $c_1(h)\equiv 2 \pmod{4}$ and $n\equiv 5\pmod{8}$, then $c_2(h_2)\equiv c_2(h)\pmod{4}$.
\item Case 3: $c_1(h)\equiv 3\pmod{4}$. In this case, $\#\c{\Vect^h_n(\CP^{n+2})}{}=\#\c{\Vect^{h_1}_{n}(\CP^{n+2})}{},$ where $c_1(h_1)\equiv 0 \pmod{4}$ and $c_2(h_1)\equiv c_2(h)+\frac{n(n-1)}{2}\pmod{4}.$ In particular, if $c_1(h)\equiv 3 \pmod{4}$ and $n\equiv 5\pmod{8}$, then $c_2(h_1)\equiv c_2(h)+2\pmod{4}$.
\end{itemize}
We see that, if $c_1(h)$ is even, then the number of lifts of $h$ to $\c{BU(n)}{}$ is the greatest common divisor of $c_2(h)$ and $4$. If $c_1(h)$ is odd but $c_2(h)$ is even, then the number of lifts of $h$ to $\c{BU(n)}{}$ is the greatest common divisor of $c_2(h)+2$ and $4$.
If $c_2(h)$ is odd, there is a unique lift.
\end{proof}

\appendix

\section{$\Z/4$-cohomology calculations}\label{app:Z4-cohomology}
The $2$-complete enumeration problems when $n\equiv 5\pmod{8}$ require some $\Z/4$-cohomology computations. 
Recall the short exact sequence of coefficient modules from \Cref{vary-coeffs}
\[0\to\Z/2\xrightarrow{i} \Z/4 \xrightarrow{p} \Z/2\to 0.\] 
The induced map on cohomology 
\begin{equation}\label{ip-0}\HZ{2}^*K(\Z,2n) \xrightarrow{i_*} \HZ{4}^*K(\Z,2n)\xrightarrow{p_*}\HZ{2}^*K(\Z,2n)\end{equation} fits into a long exact sequence whose connecting homomorphism is, by definition, $\Sq^1$. Considering the action of $\Sq^1$ on $\HZ{2}^*K(\Z,2n)$, we deduce:
\begin{lemma}\label{lem:coh-2n1-4} For $n\geq 3$ and $j\geq 2n+1$, let $\iota_{j}$ generate $\HZ{2}^{j}K(\Z,j)$.
\begin{align*} 
\HZ{4}^{j}K(\Z,j)&\cong \Z/4\{ \tilde\iota_{j}\} & \text{ where } p_*\tilde\iota_{j}=\iota_{j}, \, i_*\iota_{j}=2\tilde\iota_{j}\\
\HZ{4}^{j+1}K(\Z,j)&=0  & \\
\HZ{4}^{j+2}K(\Z,j) & \cong \Z/2 \{ i_*\Sq^2\iota_{j} \}& \\
\HZ{4}^{j+3}K(\Z,j) &\cong \Z/2 \{ {x_{j+3}} \}& \text{ where } p_*x_{j+3}=\Sq^3\iota_{j} \\
\HZ{4}^{j+4}K(\Z,j)& \cong \Z/2\{ i_*\Sq^4\iota_{j} \}& 
\end{align*}
\end{lemma}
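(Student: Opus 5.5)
The plan is to compute $\HZ{4}^{*}K(\Z,j)$ in the range $j\le *\le j+4$ from the long exact sequence of the coefficient sequence \Cref{vary-coeffs},
\[\cdots\to \HZ{2}^{k}K(\Z,j)\xrightarrow{i_*}\HZ{4}^{k}K(\Z,j)\xrightarrow{p_*}\HZ{2}^{k}K(\Z,j)\xrightarrow{\Sq^1}\HZ{2}^{k+1}K(\Z,j)\to\cdots,\]
together with the known structure of $\HZ{2}^*K(\Z,j)$ in this stable range. By Serre's computation, since $j\geq 2n+1\geq 7$ the range $*\leq j+4$ is stable, and $\HZ{2}^{*\leq j+4}K(\Z,j)$ has basis $\iota_j,\Sq^2\iota_j,\Sq^3\iota_j,\Sq^4\iota_j$ in degrees $j,j+2,j+3,j+4$, with zero in degree $j+1$. (Admissible monomials without $\Sq^1$ on the right; $\Sq^4\iota_j$ is the only one in degree $j+4$, since $\Sq^3\Sq^1$ is excluded.)

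First, I will record the $\Sq^1$ action using the Adem relations $\Sq^1\Sq^2=\Sq^3$, $\Sq^1\Sq^3=0$, $\Sq^1\Sq^4=\Sq^5$, and $\Sq^1\iota_j=0$ (because $\iota_j$ is integral). Thus $\Sq^1$ is zero on $\iota_j$, sends $\Sq^2\iota_j\mapsto\Sq^3\iota_j\neq 0$, is zero on $\Sq^3\iota_j$, and sends $\Sq^4\iota_j\mapsto\Sq^5\iota_j$, which is nonzero in degree $j+5$ (stable range, $\Sq^5$ admissible).

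Second, I will read off each degree $k$ from the short exact sequence $0\to \op{coker}(\Sq^1\colon H^{k-1}\to H^k)\xrightarrow{i_*} \HZ{4}^k\xrightarrow{p_*}\ker(\Sq^1\colon H^k\to H^{k+1})\to 0$, with $\Z/2$ coefficients on the outer terms.
\begin{itemize}
\item For $k=j$: the cokernel is $\Z/2\{\iota_j\}$ and the kernel is $\Z/2\{\iota_j\}$, so $\HZ{4}^j$ has order $4$. It is $\Z/4$ because $\HZ{4}^jK(\Z,j)\cong\op{Hom}(\Z,\Z/4)$ by the universal coefficient theorem, $H_j=\Z$ and $H_{j-1}=0$. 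Hence there is a generator $\tilde\iota_j$ with $p_*\tilde\iota_j=\iota_j$ and $i_*\iota_j=2\tilde\iota_j$.
\item For $k=j+1$: both outer terms vanish.
\item For $k=j+2$: the cokernel is $\Z/2\{\Sq^2\iota_j\}$ (the incoming $\Sq^1$ is from degree $j+1$, which is zero), and the kernel is $0$. So the group is $\Z/2\{i_*\Sq^2\iota_j\}$.
\item For $k=j+3$: the cokernel is $0$, since $\Sq^3\iota_j$ is hit, and the kernel is $\Z/2\{\Sq^3\iota_j\}$. So the group is $\Z/2\{x_{j+3}\}$ with $p_*x_{j+3}=\Sq^3\iota_j$.
\item For $k=j+4$: the cokernel is $\Z/2\{\Sq^4\iota_j\}$ (the incoming $\Sq^1\Sq^3\iota_j=0$), and the kernel is $0$. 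So the group is $\Z/2\{i_*\Sq^4\iota_j\}$.
\end{itemize}

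The only point needing care is the extension problem in degree $j$; the universal coefficient theorem resolves it, and no other degree has an extension. The main input to verify is the nonvanishing of $\Sq^5\iota_j$, which again follows from stability and admissibility.
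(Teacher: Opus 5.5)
Your proposal is correct and is essentially the paper's argument: the paper also reads off the $\Z/4$-cohomology of $K(\Z,j)$ degree by degree from the long exact sequence of $0\to\Z/2\to\Z/4\to\Z/2\to 0$, whose connecting map is $\Sq^1$, using the $\Sq^1$-action on the stable-range $\Z/2$-cohomology. You make explicit two points the paper leaves implicit, namely the $\Z/4$ extension in degree $j$ via the universal coefficient theorem and the nonvanishing of $\Sq^5\iota_j$ needed in degree $j+4$.
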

To apply \Cref{cor:computation-needed} in the case $n\equiv 5 \pmod{8}$, we need to compute $\Z/4$-cohomology of 
\[\fib\Sqp:=\fib\left(\Sq^2+\iota_{2n+3}\: K(\Z,2n+1) \times K(\Z,2n+3) \to K(\Z/2,2n+3) \right).\]
We first compute the $\Z/2$-cohomology. From the Serre spectral sequence for the $\Z/2$-cohomology of the fibration 
\[K(\Z/2,2n+2) \to \fib\Sqp \to K(\Z,2n+1) \times K(\Z,2n+3),\]
we find that
\begin{lemma}\label{coh:Z2-Sqp} With notation as above, 
\[ \HZ{2}^{*\leq 2n+5} \fib \Sqp \cong\Z/2\{1, \kappa_{2n+1}, \, \Sq^2\kappa_{2n+1},\, \Sq^4\kappa_{2n+1}\},\]
where $\kappa_{2n+1}$ in degree $2n+1$ is the pullback of the generator $\iota_{2n+1} \in \HZ{2}^{2n+1}K(\Z,2n+1).$ The Steenrod squares are as indicated, with $\Sq^1\kappa_{2n+1}=0, \, \Sq^3\kappa_{2n+1}=0,\, \Sq^5\kappa_{2n+1}=0$.
\end{lemma}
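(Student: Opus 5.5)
We compute with the $\Z/2$-cohomology Serre spectral sequence of the fibration
\[K(\Z/2,2n+2) \to \fib\Sqp \to K(\Z,2n+1) \times K(\Z,2n+3),\]
in total degrees $\le 2n+6$. This is enough to determine the groups through $2n+5$ and to detect any classes killed in degree $2n+5$.

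\emph{The $E_2$-page.} By Serre's computation, the base has cohomology through degree $2n+6$ given by
\[1,\ \iota_{2n+1},\ \Sq^2\iota_{2n+1},\ \iota_{2n+3},\ \Sq^3\iota_{2n+1},\ \Sq^4\iota_{2n+1},\ \Sq^2\iota_{2n+3},\ \Sq^5\iota_{2n+1},\ \Sq^3\iota_{2n+3}.\]
Here $\Sq^1$ vanishes on integral fundamental classes, and products such as $\iota_{2n+1}^2$ lie in degree $4n+2$, beyond our range since $n\ge 3$. The fiber contributes
\[\iota_{2n+2},\ \Sq^1\iota_{2n+2},\ \Sq^2\iota_{2n+2},\ \Sq^3\iota_{2n+2},\ \Sq^2\Sq^1\iota_{2n+2}.\]
Mixed products $\iota_{2n+2}\otimes(\text{positive base class})$ start in degree $4n+3$, so they are irrelevant.

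\emph{Transgressions.} By construction $\iota_{2n+2}$ transgresses to $\Sq^2\iota_{2n+1}+\iota_{2n+3}$. By Kudo transgression, $\Sq^I\iota_{2n+2}$ transgresses to $\Sq^I(\Sq^2\iota_{2n+1}+\iota_{2n+3})$. Using Adem relations:
\begin{itemize}
\item $\Sq^1\iota_{2n+2}\mapsto \Sq^3\iota_{2n+1}$, since $\Sq^1\Sq^2=\Sq^3$ and $\Sq^1\iota_{2n+3}=0$;
\item $\Sq^2\iota_{2n+2}\mapsto \Sq^2\Sq^2\iota_{2n+1}+\Sq^2\iota_{2n+3}=\Sq^3\Sq^1\iota_{2n+1}+\Sq^2\iota_{2n+3}=\Sq^2\iota_{2n+3}$;
\item $\Sq^3\iota_{2n+2}\mapsto \Sq^3\Sq^2\iota_{2n+1}+\Sq^3\iota_{2n+3}=\Sq^3\iota_{2n+3}$, since $\Sq^3\Sq^2=0$;
\item $\Sq^2\Sq^1\iota_{2n+2}\mapsto \Sq^2\Sq^3\iota_{2n+1}=(\Sq^5+\Sq^4\Sq^1)\iota_{2n+1}=\Sq^5\iota_{2n+1}$.
\end{itemize}
So on the base, $\Sq^2\iota_{2n+1}\equiv\iota_{2n+3}$, and both $\Sq^3\iota_{2n+1}$ and $\Sq^2\iota_{2n+3}$ die. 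In degree $2n+6$, both $\Sq^5\iota_{2n+1}$ and $\Sq^3\iota_{2n+3}$ die.

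\emph{Survivors.} Every fiber class transgresses nontrivially, so no fiber classes survive. What remains through degree $2n+5$ is
\[1,\ \kappa_{2n+1},\ [\Sq^2\iota_{2n+1}]=[\iota_{2n+3}]=\Sq^2\kappa_{2n+1},\ \Sq^4\kappa_{2n+1},\]
one class each in degrees $0$, $2n+1$, $2n+3$, $2n+5$.

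\emph{Steenrod operations.} Naturality of pullback gives $\Sq^i\kappa_{2n+1}$ as the pullback of $\Sq^i\iota_{2n+1}$. Hence:
\begin{itemize}
\item $\Sq^1\kappa_{2n+1}=0$, because it already vanishes on the base;
\item $\Sq^3\kappa_{2n+1}=0$, because $\Sq^3\iota_{2n+1}$ is killed;
\item $\Sq^5\kappa_{2n+1}=0$, because $\Sq^5\iota_{2n+1}$ is killed in degree $2n+6$.
\end{itemize}

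\emph{Main obstacle.} The delicate step is the Adem bookkeeping in degrees $2n+5$ and $2n+6$. One must check that $\Sq^4\iota_{2n+1}$ is not hit by any transgression: no fiber class lands in base degree $2n+5$ except via $\Sq^2\iota_{2n+2}$, which hits $\Sq^2\iota_{2n+3}$. One must also check that $\Sq^5\iota_{2n+1}$ is indeed hit. A further point is that no extra differential hits $\Sq^4\iota_{2n+1}$ from non-transgressive classes; this holds because there are none in the relevant range.
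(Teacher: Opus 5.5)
Your proposal is correct and follows the paper's route: the paper simply reads the result off the $\Z/2$-cohomology Serre spectral sequence of $K(\Z/2,2n+2)\to\fib\Sqp\to K(\Z,2n+1)\times K(\Z,2n+3)$. Your explicit Kudo transgressions and Adem relations supply the details the paper leaves implicit, including $\Sq^2\Sq^1\iota_{2n+2}\mapsto\Sq^5\iota_{2n+1}$, which is what gives $\Sq^5\kappa_{2n+1}=0$ in degree $2n+6$, outside the range of the group computation.
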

Using \Cref{ip-0}, we deduce:
\begin{lemma}\label{coh:Z2-Sqp-1} The $\Z/4$-cohomology of $\fib\Sqp$ in degrees at most $2n+2$ is given by:
 \[
\HZ{4}^*\fib\Sqp \cong  \begin{cases} 0 ,& 0<*<2n+1,\, \text{ or } *=2n+2,\, \\
\Z/4 \{ \tilde\kappa_{2n+1}\}, & *=2n+1.\end{cases}
\]
Additionally, $\HZ{4}^{2n+4}\fib\Sqp=0$, while there are short exact sequences
\[ 0 \to \HZ{2}^{2n+3} \fib\Sqp \xrightarrow{i_*} \HZ{4}^{2n+3} \fib\Sqp \xrightarrow{p_*} \HZ{2}^{2n+3}\fib\Sqp \to 0,\]
and
\[ 0 \to \HZ{2}^{2n+5} \fib\Sqp \xrightarrow{i_*} \HZ{4}^{2n+5} \fib\Sqp \xrightarrow{p_*} \HZ{2}^{2n+5}\fib\Sqp \to 0.\]
\end{lemma}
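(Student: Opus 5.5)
The plan is to read off the $\Z/4$-cohomology from the long exact sequence attached to the coefficient sequence \Cref{vary-coeffs},
\[\cdots\to \HZ{2}^{k-1}\fib\Sqp\xrightarrow{\Sq^1}\HZ{2}^{k}\fib\Sqp\xrightarrow{i_*}\HZ{4}^{k}\fib\Sqp\xrightarrow{p_*}\HZ{2}^{k}\fib\Sqp\xrightarrow{\Sq^1}\HZ{2}^{k+1}\fib\Sqp\to\cdots,\]
using the $\Z/2$-cohomology and Steenrod square data of \Cref{coh:Z2-Sqp}. In each degree $k\le 2n+5$, $\HZ{4}^k$ is an extension of $\ker(\Sq^1)$ in degree $k$ by $\op{coker}(\Sq^1)$ landing in degree $k$. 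By \Cref{coh:Z2-Sqp}, the $\Z/2$-cohomology in degrees $\le 2n+5$ is spanned by $1,\kappa_{2n+1},\Sq^2\kappa_{2n+1},\Sq^4\kappa_{2n+1}$, and all $\Sq^1$'s between these vanish: $\Sq^1\kappa_{2n+1}=0$, $\Sq^1\Sq^2\kappa_{2n+1}=\Sq^3\kappa_{2n+1}=0$, and $\Sq^1\Sq^4\kappa_{2n+1}=\Sq^5\kappa_{2n+1}=0$. Hence every Bockstein in this range is zero, and we get short exact sequences $0\to\HZ{2}^k\to\HZ{4}^k\to\HZ{2}^k\to 0$ for all $0<k\le 2n+5$. (For $k=2n+5$ we need $\Sq^1$ out of degree $2n+5$ to vanish, which is exactly $\Sq^5\kappa_{2n+1}=0$; the group in degree $2n+6$ need not be computed. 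Similarly $\Sq^1$ into degree $2n+5$ comes from degree $2n+4$, which is zero.)

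These short exact sequences give the statement directly. For $0<k<2n+1$, $k=2n+2$ and $k=2n+4$, both outer terms vanish, so $\HZ{4}^k=0$. For $k=2n+3$ and $k=2n+5$, they are precisely the displayed short exact sequences.

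The one point needing care is $k=2n+1$, where the extension $0\to\Z/2\to\HZ{4}^{2n+1}\to\Z/2\to 0$ could a priori split. To show it is $\Z/4$, I would use naturality along $\fib\Sqp\to K(\Z,2n+1)\times K(\Z,2n+3)\to K(\Z,2n+1)$. By \Cref{lem:coh-2n1-4}, $\HZ{4}^{2n+1}K(\Z,2n+1)=\Z/4\{\tilde\iota_{2n+1}\}$ with $i_*\iota_{2n+1}=2\tilde\iota_{2n+1}$. Its pullback $\tilde\kappa_{2n+1}$ satisfies $p_*\tilde\kappa_{2n+1}=\kappa_{2n+1}\neq 0$. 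By naturality of $i_*$ and injectivity of $i_*$ on $\HZ{2}^{2n+1}\fib\Sqp$ (injective because the incoming Bockstein from degree $2n$ is zero), we get $2\tilde\kappa_{2n+1}=i_*\kappa_{2n+1}\neq 0$. So $\tilde\kappa_{2n+1}$ has order $4$ and generates.

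Alternatively, since the fiber $K(\Z/2,2n+2)$ is $(2n+1)$-connected, $\fib\Sqp\to K(\Z,2n+1)\times K(\Z,2n+3)$ is $(2n+2)$-connected. This makes $\HZ{4}^{k}$ agree with that of $K(\Z,2n+1)$ for $k\le 2n+1$, and the two arguments cross-check each other.

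I expect the main obstacle to be confirming that the $\Z/2$ input of \Cref{coh:Z2-Sqp} really contains no extra classes in degrees $2n+2$ through $2n+5$. In particular, the transgression $\iota_{2n+2}\mapsto \Sq^2\iota_{2n+1}+\iota_{2n+3}$ must kill $\Sq^1\iota_{2n+2}$ against $\Sq^3\iota_{2n+1}+\Sq^1\iota_{2n+3}=\Sq^3\iota_{2n+1}$, and so on. Granting that lemma as stated, the rest is formal bookkeeping.
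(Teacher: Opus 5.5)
Your proposal is correct and follows essentially the same approach as the paper. The paper deduces the lemma in one line from the long exact sequence for $0\to\Z/2\to\Z/4\to\Z/2\to 0$, whose connecting map is $\Sq^1$, together with the vanishing of $\Sq^1$ on $\kappa_{2n+1}$, $\Sq^2\kappa_{2n+1}$ and $\Sq^4\kappa_{2n+1}$ recorded in \Cref{coh:Z2-Sqp}. Your check that the degree $2n+1$ extension is nonsplit fills in a detail the paper leaves implicit; you do this by naturality from $K(\Z,2n+1)$, or equivalently via $2\tilde\kappa_{2n+1}=i_*p_*\tilde\kappa_{2n+1}=i_*\kappa_{2n+1}\neq 0$.
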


\section{Actions of Steenrod powers on Chern classes}\label{app:Chernformulas}

We prove the formulas needed for the action of $P^1$ on $c_n$ when $p=3$, and the actions of $\Sq^2$ and $\Sq^4$ on $c_n$ when $p=2$.

\begin{prop} \label{prop:Chernform3}
When $p=3$, the action of the Steenrod power $P^1$ on the $n$-th Chern class $c_n$ is given by the following formula:
\begin{equation}
P^1 c_n = c_1^2c_n + c_2c_n +2c_1c_{n+1} + (n+2)c_{n+2}.
\end{equation}
\end{prop}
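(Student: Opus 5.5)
We will prove the formula by the splitting principle, reducing to a computation with Chern roots in $H^*(BT;\Z/3)$ where $T$ is the maximal torus of $U(N)$ for $N$ large. Write the total Chern class as $c=\prod_{i}(1+x_i)$, so $c_n=e_n(x_1,\dots,x_N)$ is the $n$-th elementary symmetric polynomial in classes $x_i$ of degree $2$. Since $P^1x_i=x_i^3$ for a degree-$2$ class at $p=3$, and $P^1$ is a derivation on products of degree-$2$ classes modulo the higher terms of the Cartan formula (which vanish here because $P^k x_i=0$ for $k\ge 2$), we get
\[P^1 e_n=\sum_{|S|=n}\sum_{i\in S}x_i^2\prod_{j\in S}x_j=\sum_{|S|=n}\Big(\sum_{i\in S}x_i^2\Big)x_S,\]
where $x_S=\prod_{j\in S}x_j$. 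The task is then purely combinatorial: express the symmetric polynomial $\sum_{|S|=n}p_2(S)\,x_S$ in terms of elementary symmetric polynomials, modulo $3$.

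The key step is the identity for the monomial symmetric function $m_{(3,1^{n-1})}=\sum_{|S|=n}\sum_{i\in S}x_i^2x_S$. We will compute the products $e_1^2e_n$, $e_2e_n$, $e_1e_{n+1}$ in terms of the monomial basis $m_{(3,1^{n-1})}$, $m_{(2,2,1^{n-2})}$, $m_{(2,1^{n})}$, $m_{(1^{n+2})}$, by counting how many ways each monomial arises. Concretely: $e_1e_{n+1}=m_{(2,1^{n})}+(n+2)m_{(1^{n+2})}$; $e_2e_n=m_{(2,2,1^{n-2})}+n\,m_{(2,1^{n})}+\binom{n+2}{2}m_{(1^{n+2})}$; and $e_1^2e_n=m_{(3,1^{n-1})}+2m_{(2,2,1^{n-2})}+(n+1)\,m_{(2,1^n)}\cdot(\text{coefficient to check})+\dots$. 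We then solve the resulting triangular linear system for $m_{(3,1^{n-1})}$, obtaining an integral formula of the shape $m_{(3,1^{n-1})}=e_1^2e_n-2e_2e_n-e_1e_{n+1}+(n+2)e_{n+2}$ (to be verified by the counts), and reduce modulo $3$, where $-2\equiv 1$ and $-1\equiv 2$, giving exactly $c_1^2c_n+c_2c_n+2c_1c_{n+1}+(n+2)c_{n+2}$.

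The main obstacle is getting the multiplicities in these expansions right, particularly the coefficient of $m_{(2,1^n)}$ and $m_{(1^{n+2})}$ in $e_1^2e_n$, since these determine whether the $n$-dependence collapses to the single term $(n+2)c_{n+2}$. I would handle this by checking the final integral identity independently via Newton-type reasoning: $m_{(3,1^{n-1})}$ is the coefficient extraction $\sum_S p_2(S)x_S$, and one can verify it on small cases ($n=1$, where $m_{(3)}=p_3=e_1^3-3e_1e_2+3e_3$, and $n=2$) to fix any miscounted coefficient before reducing mod $3$. Finally, injectivity of $H^*(BU;\Z/3)\to H^*(BT;\Z/3)$ and naturality of $P^1$ transfer the identity back to $H^*(BU;\Z/3)$ and hence to any $BU(m)$.
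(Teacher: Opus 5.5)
Your approach is essentially the paper's: splitting principle, $P^1$ acting as a derivation with $P^1x_i=x_i^3$, and the integral identity $\sum_{|S|=n}\bigl(\sum_{i\in S}x_i^2\bigr)x_S=e_1^2e_n-2e_2e_n-e_1e_{n+1}+(n+2)e_{n+2}$ reduced mod $3$. The paper gets this identity without expanding $e_2e_n$ in monomials, by writing $\sum_{i\in S}x_i^2=p_2-\sum_{k\notin S}x_k^2$, so the correction term is $e_1e_{n+1}-(n+2)e_{n+2}$. The coefficient you left open is $2n+1$, not $n+1$: $e_1^2e_n=m_{(3,1^{n-1})}+2m_{(2,2,1^{n-2})}+(2n+1)m_{(2,1^n)}+(n+1)(n+2)m_{(1^{n+2})}$, and with this value your triangular solve gives exactly the identity you predicted.
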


\begin{proof}
We use the splitting principle. Consider the standard map $BU(1)^{\times (n+2)} \to BU(n+2)$, which, in $\Z/3$-cohomology, sends $c_n$ to $\sigma_n(x_1, \cdots, x_{n+2})$ (the $n$-th elementary symmetric polynomial in variables $x_1, \cdots, x_{n+2}$). It follows that
\begin{align*}
P^1\sigma_n(x_1, \cdots, x_{n+2}) &= P^1(\sum_{\substack{i< j}} \prod_{r\neq i,j} x_r)\,\,\, = \,\,\, \sum_{i< j} (\prod_{r\neq i,j} x_r)(\sum_{\substack{k\neq i,j}} x_k^2) \\
&= (\sum_{i < j}\prod_{r\neq i,j} x_r)( \sum_{k} x_k^2)
-\left((\sum_i\prod_{k \neq i} x_k) (\sum_r x_r) -(n+2)\prod_{l}x_l\right) \\
&=(\sum_{i < j}\prod_{r\neq i,j} x_r)\left( (\sum_{k} x_k)^2-2\sum_{i<j}x_ix_j \right)
-\left((\sum_i\prod_{k \neq i} (x_k)) (\sum_r x_r) -(n+2)\prod_{l}x_l\right) \\
 &= \sigma_1^2\sigma_n - 2\sigma_2\sigma_n - \sigma_1\sigma_{n+1} + (n+2)\sigma_{n+2},
\end{align*}
and hence $
P^1c_n = c_1^2c_n + c_2c_n + 2c_1c_{n+1} + (n+2)c_{n+2}.$
\end{proof}

\begin{prop} \label{prop:Chernform2}
When $p=2$, the actions of the Steenrod squares $\Sq^2$ and $\Sq^4$ on $c_n$ are
\begin{equation} \label{eq:sq2}
\Sq^2 c_n = c_1c_n + (n+1)c_{n+1},
\end{equation}
and
\begin{equation} \label{eq:sq4}
\Sq^4 c_n = c_2c_n + nc_1c_{n+1} + \frac{(n+2)(n-1)}{2}c_{n+2}.
\end{equation}
\end{prop}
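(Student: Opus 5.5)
We argue exactly as in the proof of \Cref{prop:Chernform3}, using the splitting principle. Consider $BU(1)^{\times(n+2)}\to BU(n+2)$. In $\Z/2$-cohomology it is injective and sends $c_k$ to $\sigma_k=\sigma_k(x_1,\ldots,x_{n+2})$. Since only $c_n,c_{n+1},c_{n+2}$ and products with $c_1,c_2$ appear, and the formulas are natural under the stabilization maps, it suffices to verify the identities for the elementary symmetric polynomials in $n+2$ variables. For a subset $S$ of indices, write $x_S=\prod_{k\in S}x_k$, so that $\sigma_n=\sum_{|S|=n}x_S$. The key input is the total square $\Sq(x_k)=x_k+x_k^2$ together with the Cartan formula, which give
\[\Sq(x_S)=x_S\prod_{k\in S}(1+x_k).\]
Hence $\Sq^2x_S=x_S\,e_1(S)$ and $\Sq^4x_S=x_S\,e_2(S)$, where $e_i(S)$ denotes the $i$-th elementary symmetric polynomial in the variables indexed by $S$.

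For $\Sq^2$, expand $\sigma_1\sigma_n=\sum_S x_S\bigl(e_1(S)+e_1(S^c)\bigr)$. The first part is $\Sq^2\sigma_n$. The second part is a sum of squarefree monomials $x_T$ with $|T|=n+1$, and each such $T$ arises $n+1$ times (once for each choice of the added index). Therefore $\Sq^2\sigma_n=\sigma_1\sigma_n+(n+1)\sigma_{n+1}$ modulo $2$, which is \Cref{eq:sq2}.

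For $\Sq^4$, expand
\[\sigma_2\sigma_n=\sum_S x_S\bigl(e_2(S)+e_1(S)e_1(S^c)+e_2(S^c)\bigr).\]
The first part is $\Sq^4\sigma_n$. The third part produces each squarefree monomial $x_U$ with $|U|=n+2$ exactly $\binom{n+2}{2}$ times. The middle part produces monomials $x_Tx_i$ with $|T|=n+1$ and $i\in T$. Each such monomial arises $n$ times, once for each choice of which element of $T\setminus\{i\}$ was the outside index. Using
\[\sum_{|T|=n+1,\ i\in T}x_Tx_i=\sigma_1\sigma_{n+1}-(n+2)\sigma_{n+2},\]
we obtain, modulo $2$,
\[\Sq^4\sigma_n=\sigma_2\sigma_n+n\sigma_1\sigma_{n+1}+\Bigl(n(n+2)+\tbinom{n+2}{2}\Bigr)\sigma_{n+2}.\]
The coefficient of $\sigma_{n+2}$ is $\frac{(n+2)(3n+1)}{2}$. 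It differs from $\frac{(n+2)(n-1)}{2}$ by $(n+2)(n+1)$, which is even, so \Cref{eq:sq4} follows.

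The main obstacle is the multiplicity count in the middle term for $\Sq^4$, together with the correct handling of the squared-variable monomials. We would double-check it on small cases, for instance $n=1$ with three variables, before relying on it.
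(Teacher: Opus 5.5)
Your proof is correct and is essentially the paper's argument. Both use the splitting principle, the total-square formula on squarefree monomials, and the decomposition of $\sigma_1\sigma_n$ and $\sigma_2\sigma_n$ according to how many of the extra indices lie in $S$, with the same multiplicities $n$ and $\binom{n+2}{2}$ and the same identity $\sigma_1\sigma_{n+1}-(n+2)\sigma_{n+2}$ for the middle term. The only differences are cosmetic: you work mod $2$ from the start, so your coefficient $n(n+2)+\binom{n+2}{2}$ replaces the paper's $n(n+2)-\binom{n+2}{2}$, and you use $n+2$ rather than $n+1$ variables for the first formula.
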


\begin{proof}

Again, we use the splitting principle. Consider the map $BU(1)^{\times (n+2)} \to BU(n+2)$ that, in $\Z/2$-cohomology, sends $c_n$ to $\sigma_n=\sigma_n(x_1, \cdots, x_{n+2}),$ the $n$-th elementary symmetric polynomial in $x_1,\ldots, x_{n+2}$. We find that

\begin{align*}
\Sq^4\sigma_n(x_1, \cdots, x_{n+2}) &= \Sq^4(\sum_{\substack{i< j}} \prod_{r\neq i,j} x_r) \,\,\,\,=\,\,\,\,\sum_{\substack{i< j}}( \prod_{r\neq i,j} x_r)\cdot (\sum_{\substack{k<l\\ \{k,l\} \cap \{i,j\} = \emptyset}} x_kx_l) \\
 &= \sigma_n\sigma_2 - \left( n(\sum_{i\neq j} (x_i^2) \prod_{k\neq i,j} x_k ) + \binom{n+2}{2}x_1x_2\cdots x_{n+2} \right) \\
 &= \sigma_n\sigma_2 - \binom{n+2}{2}\sigma_{n+2} - n(\sigma_1\sigma_{n+1} - (n+2)\sigma_{n+2}) \\
 &= \sigma_n\sigma_2 - n\sigma_1\sigma_{n+1} + \left(-\binom{n+2}{2} + n(n+2) \right)\sigma_{n+2} \\
 &= \sigma_n\sigma_2 + n\sigma_1\sigma_{n+1} + \frac{(n-1)(n+2)}{2} \sigma_{n+2},
\end{align*}
and hence $
\Sq^4c_n = c_2c_n + n c_1c_{n+1} + \frac{(n+2)(n-1)}{2} c_{n+2}.$
This proves \Cref{eq:sq4}. The proof of \Cref{eq:sq2} is similar but easier, by considering $BU(1)^{\times (n+1)} \to BU(n+1)$ instead.

\begin{align*}
\Sq^2\sigma_n(x_1, \cdots, x_{n+1}) &= \Sq^2(\sum_{i} \prod_{r\neq i} x_r) \,\,\,= \,\,\,\sum_{i}( \prod_{r\neq i} x_r)\cdot (\sum_{\substack{ k\neq i}} x_k) \\
&=\left(\sum_{i}( \prod_{r\neq i} x_r)\right) \cdot (\sum_{\substack{ k}} x_k)-(n+1)\prod_jx_j \\
 &= \sigma_n\sigma_1 - (n+1)\sigma_{n+1}.
\end{align*}
\end{proof}


\bibliographystyle{abbrv}
\bibliography{corank2}

@article{Arone02,
author={G. Arone},
title={The {W}eiss derivatives of {BO(-) and BU(-)}},
journal={Topology},
volume={41},
year={2002},
number={3},
pages={451--481}
}

@incollection{AHSSeq,
  author    = {Atiyah, M. F. and Hirzebruch, F.},
  title     = {Vector Bundles and Homogeneous Spaces},
  booktitle = {Differential Geometry},
  series    = {Proceedings of Symposia in Pure Mathematics},
  volume    = {3},
  pages     = {7--38},
  publisher = {American Mathematical Society},
  address   = {Providence, RI},
  year      = {1961}
}

@article{AR,
author={M. Atiyah and E. Rees},
title={Vector bundles on projective 3-space}, 
journal={Inventiones Math.}, 
volume={35}, 
year={1976},
pages={131--153} 
}

@article{Bott,
 author={R. Bott},
 journal={Ann. of Math.},
 pages={313--337},
 title={The Stable Homotopy of the Classical Groups},
 volume={70},
 year={1959}
}

@article{Hu,
author={Y. Hu},
title={Metastable complex vector bundles over complex projective spaces},
journal={Trans. Amer. Math. Soc.},
volume={376},
year={2023},
number={11},
pages={7783--7814}
}

@book{MP,
author={J. P. May and K. Ponto},
title={More Concise Algebraic Topology: Localization, Completion, and Model Categories},
series={Chicago Lectures in Mathematics},
publisher={The University of Chicago Press},
address={Chicago and London},
year={2012}
}

@inproceedings{Mimura_HBAT,
author={M. Mimura},
editor={I. M. James},
booktitle={Handbook of Algebraic Topology},
title={Homotopy theory of {L}ie groups},
year={1995},
volume={58},
pages={951--991},
publisher={Elsevier}
}

@article{Opie-r3p5,
title = {A classification of complex rank 3 vector bundles on ${CP}^5$},
journal = {Advances in Mathematics},
volume = {455},
pages = {109878},
year = {2024},
issn = {0001-8708},
doi = {https://doi.org/10.1016/j.aim.2024.109878},
url = {https://www.sciencedirect.com/science/article/pii/S0001870824003931},
author = {M. Opie}
}

@article{Switzer,
author={R. M. Switzer},
title={Rank $2$ bundles over {$P^n$} and the e-Invariant},
journal={Indiana University Math. J.},
volume={28},
year={1979},
number={6},
pages={961--974}
}

@article{Switzer2,
author={R. M. Switzer},
title={Complex $2$-plane bundles over complex projective space},
journal={Math. Z.},
volume={168},
year={1979},
number={},
pages={275--287}
}

@article{Weiss,
author={M. Weiss},
title={Orthogonal calculus},
journal={Trans. Amer. Math. Soc.},
volume={347},
year={1995},
number={10},
pages={3743--3796}
}

@book{MT,
  title={Cohomology operations and applications in homotopy theory},
  author={Mosher, Robert E and Tangora, Martin C},
  year={2008},
  publisher={Courier Corporation}
}

@article{opie24enum,
  title={Enumerating complex rank {$ n $} vector bundles on {$\mathbb C P^{n+1}$}},
  author={Opie, Morgan P},
  journal={arXiv preprint arXiv:2410.23520},
  year={2024}
}

@article{taggart22unitary,
  title={Unitary calculus: model categories and convergence},
  author={Taggart, Niall},
  journal={Journal of Homotopy and Related Structures},
  volume={17},
  pages={419--462},
  year={2022},
  publisher={Springer Science+ Business Media}
}

@article{carrtagg24,
  title={Symplectic {W}eiss calculi},
  author={Carr, Matthew and Taggart, Niall},
  journal={arXiv preprint arXiv:2404.11796},
  year={2024}
}

@article{singer73,
  title={Steenrod squares in spectral sequences. {I}},
  author={Singer, William M},
  journal={Transactions of the American Mathematical Society},
  volume={175},
  pages={327--336},
  year={1973}
}

\end{document}